\definecolor{block-gray}{gray}{0.85}
\newtcolorbox{blockquote}{colback=block-gray,grow to right by=-1mm,grow to left by=-1mm,boxrule=0pt,boxsep=0pt,breakable}
\theoremstyle{definition}
\newtheorem{theorem}{Theorem}
\newtheorem{lemma}[theorem]{Lemma}
\newtheorem{prop}[theorem]{Proposition}
\newtheorem{corollary}[theorem]{Corollary}
\newtheorem{Question}[theorem]{Question}
\theoremstyle{definition}
\newtheorem{definition}[theorem]{Definition}
\newtheorem{remark}[theorem]{Remark}
\newtheorem{fact}[theorem]{Fact}
\newcommand{\ep}{\varepsilon}
\newcommand{\res}{\restriction}
\newcommand{\sm}{\setminus}
\newcommand{\ww}{\omega^\omega}
 \newcommand{\sU}{\mathcal{U}}
\newcommand{\sS}{\mathcal{S}}
\newcommand{\sF}{\mathcal{F}}
\newcommand{\zfc}{\ensuremath {\mathsf{ZFC}}}
\newcommand{\zf}{\ensuremath {\mathsf{ZF}}}
\newcommand{\ch}{\ensuremath {\mathsf{CH}}}
\newcommand{\ac}{\ensuremath {\mathsf{AC}}}
\newcommand{\dc}{\ensuremath {\mathsf{DC}}}
\newcommand{\norm}[1]{\Big\lVert#1\Big\rVert}
\def\N{{\mathbb N}} \def\R{{\mathbb R}} \def\Z{{\mathbb Z}} \def\K{{\mathbb K}}
  \def\cT{{\mathcal T}}
\DeclareMathOperator\supp{supp}
\newcommand{\jt}{\text{JT}} 
\newcommand{\J}{\text{J}}
\title{On coarse geometry of separable dual Banach spaces}
\author{Stephen Jackson}
\author{Cory Krause}
\author{B\"unyamin Sar\i}
\address{Stephen Jackson and B\"unyamin Sar\i, Department of Mathematics, University of North Texas, 1155
Union Circle \#311430,
Denton, TX 76203-5017}
\email{Stephen.Jackson@unt.edu, bunyamin@unt.edu}
\address{Cory Krause, Department of Mathematics, LeTourneau University, 2100 South Mobberly, Longview, TX 75602}
\email{corykrause@letu.edu}
\thanks{The first author was supported by NSF grant DMS-1800323.}
\subjclass[2020]{46B85, 46B06}
\begin{document}

\begin{abstract}
We study the obstructions to coarse universality in separable dual Banach spaces. We prove coarse non-universality of several classes of dual spaces, including those with conditional spreading bases, as well as generalized James and James tree spaces. We also give quantitative counterparts of some of the results, clarifying the distinction between coarse non-universality and the non-equi-coarse embeddings of the Kalton graphs. Unique to our approach is the use of a Ramsey ultrafilter. While the existence of such ultrafilters typically requires \ch, we are able to show that the conclusions of our theorems follow from \zfc\, alone via an absoluteness argument. Finally, we also show how our techniques can be used to prove various previously known results in the literature.

\end{abstract}
\maketitle

\tableofcontents

\section{Introduction}

We study the obstructions to coarse universality in separable {\em dual} Banach
spaces. That is, we seek conditions on such spaces which do not admit a coarse
embedding of $c_0$ (see 2.1 below for the definition of a coarse embedding). Recall that $c_0$ is universal for all separable metric spaces in the Lipschitz (and in particular, coarse) category. {\em In full generality, it remains open whether $c_0$ coarsely embeds into any
separable dual Banach space.} (See \cite{BLPP} for a discussion and a partial result for coarse-Lipschitz embeddings.) 

The notion of coarse embedding arises naturally in geometric group theory and was first introduced (under a different name) and studied by Gromov \cite{Gro1,Gro2}. As the definition in the next section makes clear, coarse embedding is a very weak notion of embedding. In fact, as Gromov pointed it out, it was not at all obvious whether there exist {\em any} obstructions to embedding an arbitrary separable metric space into an infinite-dimensional Hilbert space or another `nice' Banach space. As it turned out there are such spaces.  While it wasn't the first example, Johnson and Randrianarivony \cite{JR}, for instance, proved that for $p>2$, $\ell_p$ does not coarsely embed into a Hilbert space.

On the other hand, Yu's seminal work \cite{Yu} showed that the existence of such embeddings has deep consequences in topology and geometric group theory, particularly for the Novikov conjecture and the coarse Baum-Connes conjecture. These are often viewed as infinitesimal analogues of the Borel conjecture, which asserts that two aspherical closed manifolds are homeomorphic whenever their fundamental groups are isomorphic.

Yu's results initiated an extensive line of research, across several areas of mathematics including Banach space theory aimed at identifying obstructions to coarse embeddings of metric spaces into certain nice classes of Banach spaces. Providing a comprehensive survey of these developments is beyond the scope of this introduction. We refer the reader to the excellent expositions in the introductions of \cite{EMN,BLS} and the references therein for further background.

Kalton studied coarse embeddings in Banach spaces in a milestone work \cite{K} where he showed that $c_0$ (and certain other Banach spaces) do not coarsely embed into reflexive Banach spaces. Kalton notably introduced an invariant, nowadays called the Kalton's property $\mathcal Q$ and proved a concentration inequality for the Kalton's interlacing graphs, which are used as (quantitative) test spaces for obstructions to the coarse universality. These ideas have had significant influence on subsequent developments in the coarse geometry of Banach spaces (cf. \cite{BLMS, BLS, LPP}), including the present work.

Our work is centered around a beautiful argument of
Kalton \cite{K} which we refer to as {\em the interlacing argument}. The core of
the argument is very simple to describe. Suppose $\phi:c_0\to X$ is a coarse
embedding. Consider the summing basis $(s_i)$ of $c_0$ and for every $k$-tuple
$\vec n=(n_1, \ldots, n_k)\in \N^k$, consider the vector $\phi\big(\sum_{i=1}^k s_{n_i}\big)$ which we abbreviate as $\phi(\vec n)$. By the fact that $\phi$ is a coarse embedding, $\|\phi(\vec n)\|\to \infty$ as $k\to\infty$ since $\|\sum_{i=1}^k s_{n_i}\|=k$. On
the other hand, since $\phi$ is $K$-Lipschitz for large distances for some $K$,
for interlacing tuples $n_1<m_1<\ldots<n_k<m_k$ we have $\|\phi(\vec n)-\phi(\vec m)\|\le K$ since $\|\sum_{i=1}^k s_{n_i}-\sum_{i=1}^k
s_{m_i}\|=1$ in $c_0$. Thus, if for infinitely many $k$'s we can find interlacing
tuples so that the inequality
$$(*)\ \|\phi(\vec n)\|\le C\|\phi(\vec n)-\phi(\vec m)\|$$ holds for some interlacing tuples $\vec{m},\vec{n}\in M^k$ ($C$ could be replaced by any function $C(k)$ which grows slower
than the coarse expansion map of $\phi$), then we would arrive at a contradiction, thus such a
coarse embedding is not possible. 

In order to show that $(*)$ holds for a space one needs to understand the
possible form of the vectors $\phi(\vec n)$ and, in some cases,
their norming functionals $\phi^*(\vec n)$, and use appropriate assumptions
on $X$ and its dual $X^*$. The heart of the problem is combinatorial and it
depends on whether $X$ and $X^*$ allow certain Ramsey type stabilizations. It
turns out that for certain classes of spaces such as {\em reflexive spaces} or {\em dual spaces
with asymptotic unconditional structure} this is rather easy to do. These spaces satisfy {\em Kalton's property $\mathcal Q$}, that implies that there exists an infinite $M\subseteq \N$ so that $(*)$ holds {\em for all} tuples $\vec n\in M^k$. In the literature, such a result is often called a \emph{concentration inequality}. Of course, the property $\mathcal Q$ is not a necessary obstruction to the coarse universality. The property $\mathcal Q$ covers limited instances of Kalton's interlacing scheme. As we will show, variations of Kalton's scheme can be made to work for more general classes of separable dual spaces. In this way, we are able to show coarse non-universality in instances where concentration inequalities are not possible. 

The starting point for the proofs of most of our theorems is a general {\em asymptotic linearization}
theorem, Theorem \ref{linearization}. First, we may assume for the purpose of the problem that $X$
has a boundedly complete basis since every separable dual Banach space embeds
into a space with a boundedly complete basis \cite{DFJP}. Then, given a nonprincipal filter $\sU$, Theorem \ref{linearization} asserts that, for a $\sU$
positive set of tuples, $\phi(\vec n)$ is close to being linear in the
following sense. For all $\ep>0$,
there is a $\sU$ positive set of tuples (in the iterated sense defined in the statement of Theorem \ref{linearization}) so that
$\phi(\vec n)$ is $\ep$-close to the sum of successive block vectors
$h_0, h(n_1), \ldots, h(n_1,\ldots, n_k)$ with respect to the basis. We also have a similar linearization for norming
functionals for $\phi(\vec n)$'s in reflexive spaces.

While the form of the argument used to prove Theorem \ref{linearization} is well-known to experts, we believe it is worthwhile to present it in this general setting where it can be applied to many situations. Theorem \ref{linearization} is stated in terms of a nonprincipal filter, but if we take $\sU$ simply to be the Frechet filter of cofinite sets, we obtain Corollary \ref{lin-asy}. Here the supports of the vectors
$h(n_1, \ldots, n_i)$'s may be taken arbitrarily far out when the $n_i$'s are. In the language
of asymptotic structure of \cite{MMT}, $(h(n_1, \ldots, n_i))_{i=1}^k$ spans an asymptotic
space for $X$. In Section \ref{applications}, we use this corrollary to give unified proofs
of several known results in the literature (e.g., main results of \cite{K},
\cite{BLS}, and \cite{BLMS}).

For instance, we deduce that {\em $c_0$ does not
embed into a reflexive space or a separable dual space with asymptotic unconditional structure}
(Theorem \ref{ref-asy-unc}.) We also deduce that {\em
non-reflexive spaces which admit no $\ell_1$ spreading models do not coarsely
embed into reflexive Banach spaces} (Corollary \ref{non-ref-no-l_1}). This
reproves Kalton's results \cite{K} that $c_0$ and certain non-reflexive spaces
like the James space and those with alternating Banach-Saks property do not
coarsely embed into reflexive spaces. Moreover, Theorem \ref{linearization} also asserts that $\|h(n_1,\ldots, n_i)\|\le K$,
$1\le i\le k$. This in turn can be used to prove the main result of \cite{BLMS} that {\em any space that coarsely embeds
into a reflexive asymptotic-$c_0$ space must be a reflexive asymptotic-$c_0$
space} (Corollary \ref{asy-c_0}).

Finding obstructions to the coarse non-universality of non-reflexive spaces is
more involved. If we take $\sU$ in Theorem $\ref{linearization}$ to be a Ramsey ultrafilter, we obtain Corollary \ref{lin-Ramsey} where there exists a $\sU$ large set $H$ such that the linear approximations of Theorem $\ref{linearization}$ hold for all vectors $\phi(\vec n)$ when the tuple $\vec n$ is taken from $H$. It is from this corollary that we derive the main results of the paper. Specifically, we study those spaces whose prototype examples are the James space and the James tree space.  The James space has two important properties
for the purposes of the paper, each of which is an obstruction to coarse
universality. One is that it has a {\em boundedly complete conditional spreading
basis.} We prove that spaces with such bases are not coarsely universal (Theorem
\ref{spreading}). The second is that it is quasi-reflexive. More generally, we
study the spaces whose dual is of the form $X^*=[e^*_i]\oplus Z^*$ where
$e^*_i$'s are the biorthogonal functionals to the boundedly complete basis
$(e_i)$. Theorem \ref{linearization} together with an interlacing argument
imply that if $X$ is coarsely universal then, roughly speaking, for a large set
of tuples the norming functionals $\phi^*(\vec n)$ must belong to
$Z^*$. It follows via a bit of combinatorics and again an  interlacing argument that
$X$ is not coarsely universal if $Z^*$ is finite dimensional (Theorem
\ref{quasireflexive}). {\em Both of these arguments use an infinite pigeonhole principle via Ramsey ultrafilters and are not quantitative. Although the existence of Ramsey ultrafilters typically requires $\ch$, our results do not by an absoluteness principle (see Section \ref{ch} for details).}  In the case of James space $\J$ via a simple combinatorial argument we get a quantitative non-embedding result that the Kalton interlacing graphs $(\N^k, d_{\K})$ do not equi-coarsely embed into $\J$ (Theorem \ref{J}). This was first proved in \cite{LPP} with a more involved argument. 
Moreover, {\em the generalized James spaces with boundedly complete bases are not coarsely universal} (Theorem \ref{J(e_i)-c_0}).  

One of our main contributions is a reduction argument for generalized James tree type spaces in which a coarse embedding is shown to be essentially supported on a finite set of branches, which is of independent interest (Section \ref{section-reduction}). As a consequence for the James tree space $\jt$ we show {\em not only that $c_0$ does not coarsely embed into $\jt$ but the Kalton's interlacing graphs $(\N, d_{\K})$ do not equi-coarsely embed into it} (Corollary \ref{JT}). Moreover, this result holds for a more general class of James tree spaces $\jt(e_i)$, hence, also for generalized James spaces $\J(e_i)$ (Corollary \ref{gen-james-graphs}), built over spaces whose asymptotic structures do not contain $\ell_{\infty}^n$'s (Theorem \ref{tree-space}).

\section{Preliminaries}
\subsection{Coarse embeddings}

Let $(X,d_X)$ and $(Y,d_Y)$ be metric spaces. A map $\phi:X\to Y$ is called a
{\em coarse embedding} whenever there exist nondecreasing functions
$\rho,\omega:[0,\infty)\to[0,\infty)$ such that
\begin{enumerate}
\item
$\rho(d_X(x_1,x_2))\leq d_Y(\phi(x_1),\phi(x_2))\leq \omega(d_X(x_1,x_2))$ for
all $x_1,x_2\in X$.
\item
$\lim_{t\to\infty} \rho(t)=\infty$.
\end{enumerate}

A family $(\phi_n)_n$ of maps is an {\em equi-coarse embedding} of a family of metric spaces $(X_n, d_{X_n})_n$ into $Y$ if there exists $\rho$ and $\omega$ as above so that the above conditions are satisfied for all $\phi_n:X_n\to Y$ simultaneously.

Coarse maps send bounded sets in $X$ to bounded sets in $Y$ and unbounded sets in
$X$ to unbounded sets in $Y$, but they do so in a uniform way within fixed
bounds determined by the functions $\rho$ and $\omega$. Notice that it is sufficient to define $\phi$ on a dense subset of $X$. What is more, it is sufficient to define $\phi$ on an $\ep$-net of $X$ (a subset $N$ of $X$ such that $\forall x\in X \exists y\in N \, d(x,y)<\ep$). If $X$ and $Y$ have vector space structure, we may assume that the map $\phi$ satisfies $\phi(0)=0$. This is because if $\phi$ is a coarse embedding, then so also is $\phi^*(x)=\phi(x)-\phi(0)$.

Recall that a metric space $X$ is \emph{metrically convex} if whenever $x_1,x_2\in
X$ and $\lambda\in(0,1)$ there exists a $z_\lambda\in X$ such that
\[
d(x_1,z_\lambda)=\lambda d(x_1,x_2) \quad  \text{ and } \quad d(z_\lambda,x_2)=(1-\lambda)d(x_1,x_2).
\]
Furthermore, a map between metric space $\phi:X\to Y$ is called
\emph{Lipschitz at large distances} provided there exist $r,K\in(0,\infty)$ such
that $d_Y(\phi(x_1),\phi(x_2))\leq K d_X(x_1,x_2)$ whenever $d_X(x_1,x_2)\geq
r$. A well-known and easy to prove fact is that if $X$ is metrically convex and $\phi:X\to Y$ is a coarse embedding, then $\phi$ is 
Lipschitz at large distances. 

In this paper, essentially all the maps $\phi$ we consider  are coarse embeddings of $c_0$ or equi-coarse embeddings of {\em the Kalton's interlacing graphs} $(\N^k, d_{\K})_{k=1}^{\infty}$ into a Banach space $X$. Recall that these graphs are defined by joining $\vec n, \vec m \in \N^k$ if they are {\em interlacing}, that is either $n_1\le m_1<\ldots\le n_k\le m_k$ or $m_1\le n_1<\ldots\le m_k\le n_k$, and the distance $d_{\K}(\vec n, \vec m)$ is the shortest path metric on the graph (\cite{K}). Thus, in our setting, if $\phi$ is a coarse embedding, then there exist a constant $K$ and a non-decreasing function $\rho$ with $\lim_{t\to\infty} \rho(t)=\infty$ such that for all $x_1, x_2$ with $d(x_1, x_2)\ge 1$ we have
$$\rho(d(x_1, x_2))\le \|\phi(x_1)-\phi(x_2)\|\le
K d(x_1, x_2).$$ For our purposes, the reader may take this as the definition of a coarse embedding and, similarly, the corresponding statement for equi-coarse embeddings.

\subsection{Ramsey ultrafilters and the absoluteness principle}\label{ch}
A collection $\sU$ of nonempty subsets of $\N$ is called a filter on
$\N$ provided it is closed under finite intersections and
supersets. Such a collection is called an ultrafilter if for all
$A\subseteq\N$ precisely one of $A$ or $\N\sm A$ is in $\sU$. Of
particular importance are the nonprincipal ultrafilters which consist
of only infinite sets. Unique to our presentation is the use of
so-called Ramsey ultrafilters which are nonprincipal ultrafilters
which satisfy a further additional property. When $A\subseteq \N$, let
$[A]^k$ represent the subsets of $A$ having size $k$. In the paper, we freely identify $[A]^k$ with $A^k$ which is the set of increasing $k$-tuples from $A$.

Then a
nonprincipal ultrafilter is said to be \emph{Ramsey} provided that:
$$\text{For all } k, m\in\N \text{ and each } f:[\N]^k\to m \text{
  there exists } H\in\sU \text{ so that } f \text{ is constant on }
[H]^k$$ We will sometimes refer to $f$ as a \emph{partition} of the
$k$-tuples of $\N$ into $m$ pieces and call $H\in\sU$
\emph{homogeneous} for the partition.  While the existence of Ramsey
ultrafilters is independent of $\zfc$ set theory, it is well-known
that their existence follows from the continuum hypothesis $\ch$
\cite{J}.

Recall Ramsey's theorem which states that for all $k, m\in\N$ and each
$f\colon [\N]^k\to m$ there exists an infinite $H\subseteq\N$ so that $f$ is
constant on $[H]^k$. When we prove that $c_0$ does not coarsely embed
into a space, our arguments often need the homogeneous set $H$ to be
in a fixed ultrafilter $\sU$ thus giving rise to the need for Ramsey
ultrafilters. However, whenever we are able to prove the stronger
claim that the Kalton's interlacing graphs $(\N^k,d_\K)_k$ do not
equi-coarsely embed into a space, the reader will note that our proofs
could be easily reworded in such a way as to use only Ramsey's
theorem. In the main results of our paper, proofs are given using Ramsey ultrafilters merely for
uniformity of presentation.

Because Ramsey ultrafilters are important for the arguments of this paper, and for the 
sake of completeness, we give a little more background. An ultrafilter $\sU$ on $\N$ is called
{\em selective} if for every partition $\N= \bigcup_{n \in \N} A_n$ of $\N$ into a countably infinite 
number of pieces each of which is not in $\sU$, there is a set $A\in \sU$ with $|A\cap A_n|\leq 1$
for all $n$ (equivalently, $|A\cap A_n|=1$ for all $n$). That is, we may select a point from each 
$A_n$ to get a set in the ultrafilter. A weaker notion is that of the ultrafilter $\sU$ being a $P$-point,
in which ``$|A\cap A_n|\leq 1$'' in the definition is replaced with ``$|A\cap A_n|$ is finite.'' 
Under $\ch$, there are selective ultrafilters. In Theorem~7.8 of \cite{J} a proof of the existence
of selective ultrafilters assuming $\ch$ is given (where there they are called ``Ramsey ultrafilters''). 
An ultrafilter is Ramsey if and only if it is selective. 
For a proof of this equivalence and  with various other definitions of Ramsey ultrafilter, see \cite{B}.
For convenience, we give here a short direct proof that $\ch$ implies the existence of Ramsey ultrafilters.

\begin{fact}
Assume $\zfc+\ch$. Then Ramsey ultrafilters exist. 
\end{fact}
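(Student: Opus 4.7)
The plan is to construct a Ramsey ultrafilter by transfinite recursion of length $\omega_1$. Using $\ch$, I enumerate every function $f\colon[\N]^k\to m$ (with $k,m\in\N$) and every subset of $\N$ in order type $\omega_1$, and pair them into a single enumeration $\{(f_\alpha,S_\alpha):\alpha<\omega_1\}$ that exhausts both families. I will build a $\subseteq^*$-decreasing chain $(A_\alpha)_{\alpha<\omega_1}$ of infinite subsets of $\N$, thinned at stage $\alpha$ so as to handle $f_\alpha$ and $S_\alpha$, and then take $\sU=\{B\subseteq\N : A_\alpha\subseteq^* B \text{ for some }\alpha<\omega_1\}$.

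At a successor stage $\alpha+1$, given infinite $A_\alpha$, I would first apply the classical Ramsey theorem to $f_\alpha\restriction[A_\alpha]^{k_\alpha}$ to extract an infinite $B\subseteq A_\alpha$ on which $f_\alpha$ is constant. Then one of $B\cap S_\alpha$ and $B\setminus S_\alpha$ is infinite, and I let $A_{\alpha+1}$ be whichever one is; this preserves infinitude and the $\subseteq^*$-chain property. The first thinning will eventually place an $f_\alpha$-homogeneous set into $\sU$, and the second will force $\sU$ to decide $S_\alpha$.

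The main obstacle is the limit stage, which is the step that genuinely uses the countable cofinality of $\alpha<\omega_1$. At a limit $\alpha$, I must produce an infinite $A_\alpha$ with $A_\alpha\subseteq^* A_\beta$ for every $\beta<\alpha$. Fix a cofinal sequence $\alpha_n\nearrow\alpha$; then the $\subseteq^*$-decreasing chain $A_{\alpha_0}\supseteq^* A_{\alpha_1}\supseteq^*\cdots$ admits a pseudo-intersection, built by selecting $a_n\in A_{\alpha_0}\cap\cdots\cap A_{\alpha_n}$ with $a_n>a_{n-1}$ (possible because this finite intersection equals $A_{\alpha_n}$ modulo a finite set and is therefore infinite) and setting $A_\alpha=\{a_n:n\in\N\}$. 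A routine verification shows $A_\alpha\setminus A_{\alpha_k}\subseteq\{a_0,\dots,a_{k-1}\}$ for every $k$, so $A_\alpha\subseteq^* A_{\alpha_k}$, and cofinality yields $A_\alpha\subseteq^* A_\beta$ for all $\beta<\alpha$.

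To finish, I would verify that $\sU$ has the required properties. The $\subseteq^*$-chain condition gives closure under finite intersections, closure under supersets is automatic, and each $A_\alpha$ is infinite, so $\sU$ is a nonprincipal filter. Since every $S\subseteq\N$ equals some $S_\alpha$, stage $\alpha+1$ commits $A_{\alpha+1}$ to either $S_\alpha$ or its complement, so $\sU$ is an ultrafilter. For the Ramsey property, given $f\colon[\N]^k\to m$, write $f=f_\alpha$; the set $A_{\alpha+1}$ lies in $\sU$ and is a subset of the $f_\alpha$-homogeneous $B$ chosen at that stage, hence is itself $f$-homogeneous. Thus $\sU$ is a Ramsey ultrafilter.
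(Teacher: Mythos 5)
Your proof is correct and follows essentially the same construction as the paper: a $\subseteq^*$-decreasing $\omega_1$-chain built by CH-enumeration, with Ramsey's theorem at successors, a pseudo-intersection at limits, and $\sU$ defined as the sets almost containing some $A_\alpha$. The only cosmetic difference is that you handle subsets of $\N$ as a separate case in the enumeration, whereas the paper absorbs them into the partitions as $2$-colorings via characteristic functions.
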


\begin{proof}
From $\ac$ and $\ch$ we may enumerate the partitions of $\N^k$ as $\{P_\alpha\}_{\alpha<\omega_1}$. 
Here, each $P_\alpha \colon \N^k \to \ell$ where $k=k(\alpha)$, $\ell=\ell(\alpha)$ depend on $\alpha$. 
We define infinite sets $X_\alpha \subseteq \N$ inductively and maintain that $X_{\alpha}\subseteq^* X_\beta$ 
for all $\beta <\alpha$ where 
$A\subseteq^* B$ means $A\setminus B$ is finite. Let $X_0=\N$. For $\alpha$ limit,
we can easily get $X_\alpha$ so that for all $\beta <\alpha$ we have $X_\alpha\subseteq^* X_\beta$
using the fact that $\alpha$ is countable so that $\{ X_\beta\}_{\beta<\alpha}$ can be written 
as $\{ Y_n\}_{n \in \N}$ and the fact that any finite number of the $X_\beta$ (so also the $Y_n$) have a non-empty
intersection from the induction hypothesis (given $X_{\beta_1},\dots, X_{\beta_n}$, all but finitely
many of the elements of $X_{\beta_n}$ are in $X_{\beta_0}\cap \cdots \cap X_{\beta_{n-1}}$). 
For $\alpha+1$ a successor ordinal, let $X_{\alpha+1}\subseteq X_\alpha$ be a homogeneous set 
for the partition $P_\alpha$, which is possible from Ramsey's theorem as $X_\alpha$ is infinite. 
Define $\sU$ by $A\in \sU$ iff $X_\alpha \subseteq^* A$ for some $\alpha$. $\sU$ is an ultrafilter
since for any $A\subseteq \N$ we can view $A$ as giving a partition of $\N$ (with $\ell=2$)
by the characteristic function of $A$. By construction, $\sU$ contains a homogeneous set
for every partition $P\colon \N^k \to \ell$.
\end{proof}

The proofs of our main resuts in this paper make use of Ramsey ultrafilters, which cannot be shown
to exist in $\zfc$. However, results from logic show that our main results do not require $\ch$ 
as a hypothesis. For completeness, we briefly summarize this argument.

Many of the theorems of our paper will be of the form: ``For every
separable Banach space $X$ of a certain kind and every map $\phi$
defined from $c_0$ into $X$ and every nondecreasing real-valued
functions $\rho$ and $\omega$ such that $\rho$ limits to $\infty$, one
of the inequalities in (1) above fails.''  We will compute the exact complexity
of such statements below in Lemma~\ref{comp} and state an absoluteness 
principle for an even wider class of statements in Fact~\ref{rufact} below.

A statement $\varphi(y_1,\dots,y_m)$ is said to be a {\em projective} in $x$ statement if it is of the form 
\[
\exists x_1 \forall x_2 \cdots \exists x_n\ \psi(y_1,\dots,y_m,x_1,\dots, x_n)
\]
where the quantifiers are over reals (or elements of a Polish space)
and $\psi(\vec y,\vec x)$ is a Borel in $x$ statement about $\vec y, \vec x$, that is, it is the statement that 
$(\vec y,\vec x)$ lies in the Borel set coded by $x$ (see page 504 of \cite{J} for a discussion
of coding Borel sets by reals). The statement $\varphi(\vec y)$ is a $\Sigma^1_1$ in $x$ 
statement, abbreviated $\Sigma^1_1(x)$, if it is of the form $\exists x_1 \psi(\vec y, x_1)$
where $\psi(\vec y,x_1)$ is a Borel in $x$ statement about $\vec y, x_1$. 
We say the statement is $\boldsymbol{\Sigma}^1_1$ if it is $\Sigma^1_1(x)$ for some $x$.
For further background on these ``lightface'' pointclasses of sets, we refer the reader
to \cite{M}.

In particular, the statements of our theorems will 
all be projective statements. More generally still, a statement is $\boldsymbol{\Pi}^2_1$
if it is of the form $\forall A\subseteq \R\ \chi(A)$, where $\chi(A)$ is a projective in $x$ statement 
for some $x$. 
A statement is $\boldsymbol{\Sigma}^2_1$ if it is the negation of a 
$\boldsymbol{\Pi}^2_1$ statement, or equivalently, of the form 
$\exists A\subseteq \R\ \chi(A)$, where $\chi$ is a projective statement. An
example of a  $\boldsymbol{\Sigma}^2_1$ statement is ``there exists a Ramsey 
ultrafilter on $\N$.'' 

The following is the logical principle we use. This principle was first introduced in \cite{P}. 
(See Lemma 5 and the following remark of that paper.) It shows that we
may assume in our proofs that $\ch$ holds, and in particular that a
Ramsey ultrafilter exists, but our conclusions will follow from $\zfc$
alone. In fact, $\zf$ plus a weak form of choice known as $\dc_{\mathbb{R}}$ will suffice
for the applications of choice in all our arguments.\footnote{$\dc_{\mathbb{R}}$ is the statement that, if $R$ is a binary relation on $\mathbb{R}$ such that for every $a\in\mathbb{R}$ there exists a $b\in\mathbb{R}$ where $a R b$, then there is a sequence $(x_n)\subseteq \mathbb{R}$ such that $x_n R x_{n+1}$ for all $n\in\N$.}

\begin{fact} \label{rufact}
Suppose $\theta$ is a  $\boldsymbol{\Pi}^2_1$ statement and 
$\theta$ is provable from ``$\zf+ \ch + \exists$ a wellordering of $\R$''. 
Then $\theta$ is provable from $\zf+\dc_{\mathbb{R}}$. 
\end{fact}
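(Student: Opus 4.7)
The plan is to argue by contradiction using a forcing extension. Write $\theta\equiv \forall A\subseteq \R\ \chi(A)$, where $\chi$ is a projective statement in some fixed real parameter $x_0$. Suppose for contradiction there is a model $V\models \zf+\dc_{\R}+\neg\theta$, and fix a witness $A\in V$ with $V\models \neg\chi(A)$.

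The first task is to pass to a generic extension $V[G]$ that satisfies $\zf+\ch+\exists$ wellordering of $\R$ while still witnessing $\neg\chi(A)$. Working in $V$, take $\mathbb{P}=\mathrm{Col}(\omega_1,\R^V)$, the Levy collapse whose conditions are countable partial functions from $\omega_1$ into $\R^V$, ordered by reverse extension. Since $\mathbb{P}$ is $\sigma$-closed, a fusion-style argument (which uses only $\dc_{\R}$ and not full $\ac$) shows $\mathbb{P}$ adds no reals, so $\R^{V[G]}=\R^V$. A standard density argument shows that $G$ codes a surjection $\omega_1^{V[G]}\twoheadrightarrow \R^V$, yielding both $\ch$ and a wellordering of $\R^{V[G]}$ inside $V[G]$. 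Together with the routine verification that $V[G]\models \zf$, this provides the hypotheses of the provability assumption inside $V[G]$.

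The second task is to preserve $\neg\chi(A)$. Since $A,x_0\in V\subseteq V[G]$ and $\R^V=\R^{V[G]}$, every real quantifier appearing in the projective formula $\chi$ ranges over the same set in $V$ and $V[G]$, and the Borel matrix coded by $x_0$ is interpreted absolutely. Hence $V[G]\models \neg\chi(A)$, so $V[G]\models \neg\theta$. On the other hand, by the provability hypothesis applied inside $V[G]$, one has $V[G]\models \theta$, which is the desired contradiction.

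The main obstacle is justifying the forcing machinery inside $\zf+\dc_{\R}$ rather than $\zfc$. One must verify that the Boolean-valued model $V^{\mathbb{P}}$ satisfies $\zf$, that $\sigma$-closure rules out new reals using only $\dc_{\R}$ (the selections needed are countably many choices of reals, which is precisely the strength of $\dc_{\R}$), and that projective truth transfers correctly via Borel codes. These steps are routine under $\ac$ but delicate without it, and it is exactly this package that is carried out in \cite{P}.
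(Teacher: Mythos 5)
Your proposal is correct and follows essentially the same route as the paper: force with countable partial functions collapsing $\R$ onto $\omega_1$ (adding no reals by $\sigma$-closure plus $\dc_{\mathbb{R}}$), obtain $\ch$ and a wellordering of $\R$ in the extension, and use that projective statements are absolute between models with the same reals while witnesses to the $\boldsymbol{\Sigma}^2_1$ negation persist upward. Your contradiction-style organization is just the contrapositive of the paper's direct downward transfer of the $\boldsymbol{\Pi}^2_1$ statement, and like the paper you defer the $\zf+\dc_{\mathbb{R}}$ forcing technicalities to \cite{P}.
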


\begin{proof}[Proof (sketch)] 
Let $M$ be a model of $\zf+\dc_{\mathbb{R}}$. There is a forcing extension $M[G]$ of $M$ in which 
$\ch$ holds and the reals are wellordered, and furthermore, by $\dc_{\mathbb{R}}$, $M$ and $M[G]$ have the same
real numbers (one generically adds a bijection between $\omega_1^M$ and $\R^M$
using functions with countable support as conditions). By assumption, $\theta$ 
holds in $M[G]$. Since $M$ and $M[G]$ have the same reals, projective statements 
hold in $M$ iff they hold in $M[G]$. Since $M\subseteq M[G]$, every 
set of reals in $M$ is in $M[G]$ and so if a $\boldsymbol{\Pi}^2_1$ statement
holds in $M[G]$ it holds in $M$. 

\end{proof}

To illustrate how Fact~\ref{rufact} is used we give the following Lemma~\ref{comp}
which will compute the complexity of the statements we are interested in.

Let $X$ be a separable Banach space. Let $D=\{ d_i\}_{i \in \N}$ be a countable dense set in $X$
closed under addition and scalar multiplication by rationals. Let $f\colon D\to \R^{\geq 0}$
be the norm for $X$ restricted to $D$. The function $f$ can be viewed a map from $\N$ to 
$\ww$ (taking a Borel bijection between $\R$ and $\ww$), and since $(\ww)^\omega$ is in Borel bijection
(in fact homeomorphic with) $\ww$, we can identify $f$ with a ``real'' $x\in \ww$. In this way, every 
separable Banach space $X$ is coded by a real $x \in \ww$. We can thus make assertions about $X$
be referring to the real $x$ coding $X$.

The following lemma shows that for any separable Banach space $X$, 
the statement $\chi(X)$ that there is a coarse embedding of $c_0$ into $X$ is a $\Sigma^1_1$
statement about the real $x$ coding $X$ 
(that is, it is a $\Sigma^1_1(x)$ statement).

\begin{lemma} \label{comp}
Let $X$ be a separable Banach space.  Let $x \in \ww$ be a code for $X$. 
Then the statement ``there is a coarse embedding of $c_0$ into $X$'' is a 
$\Sigma^1_1(x)$ statement. 
\end{lemma}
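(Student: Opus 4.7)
The plan is to reduce the existential quantification over coarse embeddings to one over a single real parameter, and then to check that the remaining conditions are arithmetic (hence Borel) in $(x, y)$.

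\emph{Discretization.} First, I will fix a countable dense subset $D_{c_0} = \{e_i\}_{i \in \N}$ of $c_0$ (for instance, finitely supported vectors with rational coordinates). By the remarks in Section 2.1, any coarse embedding $\phi: c_0 \to X$ is determined up to a bounded perturbation by its values on $D_{c_0}$, and one may further perturb each $\phi(e_i)$ by at most $1$ to land in the countable dense set $D = \{d_j\}_{j \in \N}$ given by the code $x$. Conversely, a map $\tilde\phi: D_{c_0} \to D$ satisfying the coarse embedding inequalities extends to a coarse embedding of $c_0$ into $X$ via a nearest-net-point construction. Hence existence of a coarse embedding of $c_0$ into $X$ is equivalent to existence of such a map $\tilde\phi: D_{c_0} \to D$.

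\emph{Coding and arithmetic content.} Such a $\tilde\phi$ is determined by the function $i \mapsto j$ with $\tilde\phi(e_i) = d_j$, and thus codes canonically as a real $y \in \ww$. The next step is to observe that the quantities $\|e_i - e_j\|_{c_0}$ are effectively computable from $(i, j)$, while $\|d_{y(i)} - d_{y(j)}\|_X$ is Borel in $(x, y, i, j)$, using the norm information packaged into $x$ together with the fact that $D$ is closed under rational vector space operations. The assertion that $\tilde\phi$ is a coarse embedding then amounts to the conjunction of
\begin{enumerate}
\item $\forall m \in \N \ \exists t_0 \in \N \ \forall i, j \in \N$: if $\|e_i - e_j\|_{c_0} \geq t_0$, then $\|d_{y(i)} - d_{y(j)}\|_X \geq m$;
\item $\forall t \in \N \ \exists C \in \N \ \forall i, j \in \N$: if $\|e_i - e_j\|_{c_0} \leq t$, then $\|d_{y(i)} - d_{y(j)}\|_X \leq C$;
\end{enumerate}
which together witness the existence of nondecreasing moduli $\rho,\omega: [0, \infty) \to [0, \infty)$ with $\rho \to \infty$ (the optimal choices $\rho_{\tilde\phi}(t) = \inf_{\|e_i-e_j\|\geq t}\|d_{y(i)}-d_{y(j)}\|$ and $\omega_{\tilde\phi}(t) = \sup_{\|e_i-e_j\|\leq t}\|d_{y(i)}-d_{y(j)}\|$ satisfy the required properties precisely when (1) and (2) hold). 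Each of (1) and (2) is arithmetic in $(x, y)$, and so is their conjunction $\psi(x, y)$.

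\emph{Conclusion.} The statement $\chi(X)$ is then equivalent to $\exists y \in \ww \, \psi(x, y)$, which is $\Sigma^1_1(x)$ by definition. The main step that needs care is the discretization equivalence, since I must verify that neither direction secretly introduces extra real quantifiers (in particular, that picking the ``nearest net point'' in both $D_{c_0}$ and $D$ can be done without invoking choice on the reals); once that is done, the complexity bookkeeping is a routine inspection of the quantifier structure above.
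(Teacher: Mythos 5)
Your proof is correct and follows essentially the same route as the paper: restrict to the countable dense set of finitely supported rational vectors in $c_0$, code the restricted map by a single element of $\ww$, check that the coarse-embedding conditions on that code are arithmetic (hence Borel) in $(x,y)$, and existentially quantify over the code. The only difference is cosmetic — you perturb the values of $\phi$ to land exactly in the dense set $D$, whereas the paper codes each value as a Cauchy sequence from $D$ (at the cost of an extra Borel condition $\alpha(w)$ asserting Cauchyness); both yield the same $\Sigma^1_1(x)$ form.
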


\begin{proof}
Fix $x\in \ww$ coding $X$ as described above. The set $E$ of finitely supported vectors
with rational coordinates is dense in $c_0$. Given a coarse embedding $\varphi$ from 
$c_0$ into $X$, consider $\varphi\res E$.
The value of $\varphi$ at a point $z\in E$ can be described as a Cauchy sequence of elements of 
the dense set $D$. Thus, identifying  $E$ with $\N$ and sequences from $D$
as elements of $\ww$, we can view $\varphi\res E$ as a sequence from $\ww$ and 
thus further identify $\varphi\res E$ with a single $w \in \ww$ (since 
$(\ww)^\omega$ and $\ww$ are in Borel bijection, in fact are homeomorphic).

Conversely,
any $w \in \ww$ can be viewed as giving a map from $E$ into the sequences from $D$. 
This will induce a map $\varphi=\varphi_w$ from $E$ to $X$ iff 
for all $a \in E$, $w(a)$ is a Cauchy sequence in $X$.
This condition is a Borel condition on $w$, in fact it is a $\Pi^0_3(w,x)$ condition.
Let $\alpha(w)$ be the statement that $w$ codes a map $\varphi_w$ from $E$ to the space 
$X$ coded by $x$, that is, the above condition is satisfied by $w$. 
So, $\alpha(w)$ is a $\Pi^0_3(w,x)$ statement. 

Let $\beta(w)$ be the statement that the map $\varphi_w$ from $E$ to $X$
is a coarse embedding. The statement is easily $\Sigma^1_1(w,x)$ where we 
existentially quantify over the functions $\rho$, $\omega$ as in the definition of 
a coarse embedding. This is good enough for our purposes, though we note that $\beta(w)$
is acually a simpler Borel condition. For example, the existence of the upper-bound
function $\omega$ is equivalent to saying $\forall m \in \N\ \exists n\in \N\ 
\forall a,b \in E\ (\| a-b\|_{c_0} \leq m\rightarrow \| w(a)-w(b)\|_X\leq n)$. 
The reader can check that this defines a $\Pi^0_3(w,x)$ condition.

Finally, $\chi(X)$ is equivalent to the statement $\exists w \in \ww\ 
(\alpha(w)\wedge \beta(w))$ which is a $\Sigma^1_1(x)$ statement, where $x\in \ww$
is a code for $X$. This is because there is a coarse embedding from $c_0$ into $X$ iff
there is a coarse embedding from $c_0\res E$ into $X$.

\end{proof}

Since $\chi(X)$ is projective it is a $\Pi^2_1$ statement about $x$. 
So, from Fact~\ref{rufact} it follows that if we can prove $\chi(X)$ from 
$\zfc+\exists$ a Ramsey ultrafilter, then $\chi(X)$ is provable from $\zfc$ 
alone (in fact from $\zf+\dc_\R$). Thus, in the later proofs (for example, Theorem~\ref{spreading})
we may freely assume the existence of a Ramsey ultrafilter.

\section{Coarse non-universality of Schreier metric spaces $S_{\alpha}(\Z)$}

We start with a direct proof of the known fact that the Schreier metric spaces $(S_{\alpha}(\Z), d_{\infty}),\alpha<\omega_1$ are not coarsely universal \cite{BLMS2}. Recall that for countable ordinals $\alpha$, the Schreier families $S_{\alpha}$ of finite collections of natural numbers are defined recursively. We put $S_0$ be the collection of singletons, and define
$$S_{\alpha+1}=\Big\{\bigcup_{j=1}^n E_j: E_j\in S_{\alpha} \ \text{and}\ n\le E_1<\ldots<E_n, n\in\N\Big\} $$ 
If $\alpha$ is a limit ordinal with each $S_{\beta}$ defined for $\beta<\alpha$, then fix an increasing sequence $\alpha_n$ so that $\sup \alpha_n=\beta$ and put
$$S_{\beta}=\bigcup_{n=1}^{\infty}\{E\in S_{\alpha_n}: n\le E\}.$$
The Schreier families naturally generate well-founded trees on $\N$ by considering their backward closures. That is, the collection $T(S_\alpha)=\{F\preceq E: E\in S_\alpha\}$ is a well-founded tree on $\N$ where $\preceq$ represents the initial segment relation. Moreover, it is well-known that $o(T(S_\alpha))=\omega^\alpha+1$.

Then the Schreier metric space $(S_{\alpha}(\Z), d_{\infty})$ is the subset 
$$S_{\alpha}(\Z)=\Big\{\sum_{i\in E}a_i e_i: a_i\in \Z, E\in S_{\alpha}\Big\}$$ of $c_0$ with the metric $d_{\infty}$ induced by the sup norm $\|\cdot\|_{\infty}$, and where $(e_i)$ is the unit vector basis.

As pointed out in \cite{BLMS2}, each $S_{\alpha}(\Z)$ naturally Lipschitz embeds into the higher order Tsirelson space $T^*_{\alpha}$ and $T^*_{\alpha}$ is a reflexive Banach space. Since $c_0$ does not coarsely embed into reflexive spaces (\cite{K}), it follows that $c_0$ does not coarsely embed into $S_{\alpha}(\Z)$ either. Below we give a direct proof of this fact. This gives a good example of the general scheme of arguments in this paper, which initiated this work, and also of independent interest as it is purely combinatorial. In this case, we use only Ramsey's theorem rather than ultrafilters. Also, our proof uses only the fact that the trees generated by $S_\alpha$ are increasing and well-founded. 
Here, when we say a tree $T$ on $\N$ is {\em increasing} we mean that if $(i_0,i_1,\dots,i_n)\in T$ then 
$i_0<i_1<\cdots <i_n$.

\begin{theorem}\label{Salpha}
Let $T$ be a well-founded increasing tree on $\N$ and define the subset of $c_0$
$$T(\Z)=\Big\{\sum_{i\in E}a_i e_i: a_i\in \Z, E\in T\Big\}$$ Then there does not exist a coarse embedding from $c_0$ to $T(\Z)$. In particular, for each $\alpha<\omega_1$, the Schreier metric space $S_\alpha(\Z)$ fails to be coarsely universal.
\end{theorem}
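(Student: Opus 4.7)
I plan to argue by transfinite induction on the rank $\alpha$ of $T$. Suppose, for contradiction, that $\phi\colon c_0 \to T(\Z)$ is a coarse embedding with Lipschitz constant $K$ at large distances and lower-bound modulus $\rho$ tending to $\infty$. Abbreviating $\phi(\vec n) := \phi\bigl(\sum_{i=1}^k s_{n_i}\bigr) = \sum_j a_j(\vec n)e_j$ with support $E(\vec n) \in T$, the summing-basis interlacing argument gives $\|\phi(\vec n)\|_\infty \geq \rho(k) \to \infty$ and $\|\phi(\vec n) - \phi(\vec m)\|_\infty \leq K$ for interlacing $\vec n, \vec m \in [\N]^k$. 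The coordinate-wise consequence specific to the $\ell_\infty$ norm is: if $|a_j(\vec n)| > K$ and $\vec n, \vec m$ interlace, then $j \in E(\vec m)$, for otherwise $a_j(\vec m) = 0$ and $|a_j(\vec n) - a_j(\vec m)| > K$. In particular, once $\rho(k) > K$, the maximizing coordinate $j^*(\vec n)$ of $\phi(\vec n)$ must lie in $E(\vec m)$ for every $\vec m$ interlacing $\vec n$.

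The base case $\alpha = 0$ (i.e., $T=\{\emptyset\}$, so $T(\Z) = \{0\}$) is trivial. For the inductive step, I would apply Ramsey's theorem iteratively to finitary colorings of $[\N]^k$ encoding the position of $j^*(\vec n)$ and the leading entries of $E(\vec n)$ relative to $\vec n$, and combine the resulting homogeneous sets diagonally across $k$ into a single infinite $M \subseteq \N$. On $M$, the coordinate-wise observation combined with the increasing-tree structure should force a common coordinate $p \in \N$ that lies in $E(\vec n)$ for every sufficiently long $\vec n \in [M]^{<\omega}$. Stripping $p$ off yields the residual map $r(\vec n) := \phi(\vec n) - a_p(\vec n)e_p$, supported in the proper subtree of $T$ rooted at $(p)$ of rank strictly less than $\alpha$, and which still satisfies the interlacing Lipschitz bound with constant $K$. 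Applying the inductive hypothesis (formulated as a uniform $\ell_\infty$-bound rather than non-embedding per se) yields a uniform bound on $\|r(\vec n)\|_\infty$ on some further infinite $M' \subseteq M$; the residual scalar $|a_p(\vec n)|$ is then controlled by propagating the interlacing Lipschitz bound along a short path to a well-chosen reference tuple. Together, these bounds contradict $\|\phi(\vec n)\|_\infty \geq \rho(k) \to \infty$ on $[M']^k$.

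The main obstacle I anticipate is achieving uniformity of the bounds in $k$: for the inductive hypothesis to carry through, the bound on $\|\phi(\vec n)\|_\infty$ on the infinite homogeneous set must depend only on $\alpha$ and $K$, not on $k$, since a $k$-dependent bound of order $\rho(k)$ would not yield a contradiction. This forces a careful formulation of the induction and a correspondingly delicate diagonal Ramsey construction so that every initial segment of $M'$ is homogeneous for the colorings at each level $k$; limit-ordinal stages are handled by exhausting $T$ by proper subtrees of smaller rank, again using the increasing-tree structure. The use of Ramsey's theorem alone—without Ramsey ultrafilters—makes the argument purely combinatorial and valid in $\zfc$ without recourse to the absoluteness principle of Section~\ref{ch}.
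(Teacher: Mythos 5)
Your overall strategy --- transfinite induction on the rank of $T$, stripping off a stabilized coordinate to pass to a subtree of smaller rank --- is genuinely different from the paper's argument, and it has a real gap that you flag yourself but do not close. For the induction to go through, the inductive hypothesis cannot be ``no coarse embedding exists'': the residual map $r(\vec n)=\phi(\vec n)-a_p(\vec n)e_p$ inherits only the upper interlacing bound, not the lower coarse bound, so you must instead prove a \emph{uniform} statement of the form ``any map into $T'(\Z)$ with interlacing Lipschitz constant $K$ concentrates on some infinite $M'$ with diameter at most $C(\alpha')K$, where $C$ depends only on the rank and not on $k$.'' That is essentially a concentration inequality (Kalton's property $\mathcal Q$) for these spaces; it is true (in the Schreier case they Lipschitz embed into the reflexive spaces $T^*_\alpha$), but proving it by induction on rank with constants independent of $k$ is the entire difficulty, and your proposal defers it to ``a correspondingly delicate diagonal Ramsey construction'' without exhibiting one. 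Two further structural problems: (i) deleting a single coordinate $p$ from $E(\vec n)$ lands in a subtree of strictly smaller rank only if $p$ is forced to be the \emph{first} element of every $E(\vec n)$, since trees are closed under initial segments, not under arbitrary subsets; and (ii) $\min E(\vec n)$ is not a priori bounded over all $\vec n$, so it cannot be stabilized by one application of Ramsey's theorem to a finite coloring --- obtaining a common $p$ already requires the ``freeze an initial segment of the tuple and stabilize the low coordinates'' device that the paper isolates as Lemma~\ref{ramseyl}.

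The paper's proof avoids the induction entirely, and with it the uniformity problem. It fixes $k$ once and for all so that every $\|\phi(\vec n)\|\geq 5K$, whence some coordinate of size $\geq 5K$ lies in a single gap $[n_{i_0},n_{i_0+1})$ with $i_0$ stabilized by Ramsey. Inserting two auxiliary points $p_1<p_2$ of the homogeneous set into that gap, the mass of $\phi(\vec n)$ on $[p_1,p_2)$ must be $<2K$ on the homogeneous side: otherwise iterating Lemma~\ref{ramseyl} produces an infinite increasing sequence of coordinates each finite initial segment of which lies inside some support $E\in T$, contradicting well-foundedness via Lemma~\ref{wfl}. The masses on $[n_{i_0},p_1)$ and $[p_2,n_{i_0+1})$ are each $<4K$ by direct interlacing comparisons. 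Together these contradict the existence of a $5K$-coordinate in the gap. Well-foundedness is used exactly once, no rank induction occurs, and all constants are absolute. To salvage your approach you would first have to establish the rank-uniform concentration inequality, which is a stronger quantitative statement than the theorem itself.
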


\begin{proof}
 Since the subset of vectors taking on integer values forms a 1-net in $c_0$, it is sufficient to suppose by way of contradiction that $\phi \colon c_0(\Z)\to T(\Z)$ is a coarse embedding. Fix a constant $K>0$ so that if $x,y\in c_0(\Z)$ and $\|x-y\| \leq 1$ then $\| \phi(x)-\phi(y)\| \leq K$, where $\| \cdot \|$ refers throughout to the $c_0$ norm. Then fix $k>K$ large enough so that if $x\in c_0(\Z)$ with $\| x \|\geq k$, then $\| \phi(x)\|\geq 5K$. Henceforth, we consider $k$-tuples $\vec n$ of this fixed length $k$ and define $\phi(\vec n)$ as in the introduction. 
Since $\|\sum_{i=1}^k s_{n_i}\|=k$, this gives 
\[
5K\leq \|\phi(\vec n)\|\leq kK
\]
for all such tuples $\vec n$.

We need the following two lemmas:

\begin{lemma} \label{ramseyl}
For any $1\leq i\leq k$, $i$-tuple $(n_1,\ldots,n_i)$, and $p>n_i$, there exists a homogeneous set $H\subseteq\N$ such that  for all $\vec m \in H^k$ with $\vec m \res [1,i]= \vec n \res [1,i]$ we have $\phi(\vec n)(j)=\phi(\vec m)(j)$ for all $1\leq j\leq p$.
\end{lemma}

\begin{proof} Since $\|\phi(\vec n)\|\leq kK$, we have $|\phi(\vec n)(j)|\leq k K$ for all $\vec n$ and all $j$. Therefore, for each $1\leq j\leq p$, there are only finitely many choices for the coordinates $\phi(\vec n)(j)$, and we may partition the $(k-i)$-tuples coming after $n_i$ according to the value of these finitely many coordinates. By applying Ramsey's theorem, we obtain a homogeneous set $H$ satisfying the lemma.
\end{proof} 

\begin{lemma} \label{wfl} Let $T$ be an increasing  well-founded tree on $\N$. 
Then there does not exists an infinite increasing sequence
$j_1<j_2<\cdots$ such that for all $n$, $\{j_1,j_2,\dots,j_n\}\subseteq A_n$ for
some $A_n\in T$.
\end{lemma}

\begin{proof}
First note that for all $n\geq 1$ the first element of $A_n$ must be less than or equal to $j_1$ since $\{j_1\}\subseteq A_n$ for all $n\geq 1$. Thus, by the pigeon-hole principle,  there is some $l_1\leq j_1$ such that $(l_1)\preceq A_n$ for infinitely many $A_n$. Next note that for all $n\geq 2$ the second element of $A_n$ is less than or equal to $j_2$ since $\{j_1,j_2\}\subseteq A_n$ for all $n\geq 2$. Again, by the pigeon-hole principle, there is some $l_2\leq j_2$ such that $(l_1,l_2)\preceq A_n$ for infinitely many $A_n$. Continuing in this manner, we can obtain $l_1<l_2<\cdots$ with $l_k\leq j_k$ such that $(l_1,\ldots,l_k)\preceq A_{n_k}$ where $A_{n_k}\in T$. Since $T$ is a tree $(l_1,\ldots,l_k)\in T$ for all $k$, contradicting well-foundedness.
\end{proof}

Returning to the proof of Theorem~\ref{Salpha}, recall for any $\vec n \in \N^k$ we have $\| \phi(\vec n)\| \geq 5K$, and so for some $1\leq i\leq k$ there is a $ j \in [n_i,n_{i+1})$ with $|\phi(\vec n)(j)| \geq 5K$ (we adopt the notational convention that $n_0=0$ and $n_{k+1}=\infty$). Applying Ramsey's theorem, we obtain an $H\subseteq \N$ so that there is a fixed $i_0$ where for all $\vec n \in H^k$ there is such a $j$ in $[n_{i_0},n_{i_0+1})$. 

Partition the $(k+2)$-tuples of $H$ of the form $n_1<\cdots<n_{i_0}<p_1<p_2<n_{i_0+1}<\ldots<n_k$ according to whether or not $\|\phi(\vec n)\res [p_1,p_2)\|\geq 2K$. Suppose that we have an $H'\subseteq H$ where this property holds. Fix the first $i_0$ many elements of this $H'$ and consider a sequence $p_1^1<p_2^1<p_1^2<p_2^2<\cdots\subseteq H'$ coming afterward. By successively applying Lemma \ref{ramseyl} where $p=p_2^i$ we obtain descending sets $(H_n)$ and an increasing sequence $(j_n)$ with $j_n\in [p_1^n,p_2^n)$ such that $\{j_1,j_2\ldots,j_n\}\subseteq \operatorname{supp}(\phi(\vec{n}))$ when the first $i_0$ elements of $\vec n$ are those fixed above and the last $k-i_0$ elements of $\vec n$ come from $H_n$. This contradicts Lemma \ref{wfl}. Therefore, on the homogeneous side, we must have an $H$ where the tuples satisfy $\|\phi(\vec n)\res [p_1,p_2)\|< 2K$.

Next, partition the $(k+2)$-tuples of this $H$ of the same form according to whether or not $\|\phi(\vec n)\res [n_{i_0},p_1)\|\geq 4K$ and suppose that we have an $H'\subseteq H$ where this property holds. Then consider two $(k+2)$-tuples from $H'$ consisting of $\vec n$ with $p_1, p_2$ added and $\vec m$ with $q_1, q_2$ added such that $\vec n\res ([1,i_0-1]\cup[i_0+1,k])=\vec m\res([1,i_0-1]\cup[i_0+1,k])$ and
\[
n_{i_0}<p_1<m_{i_0}<q_1<p_2<q_2
\]
Then $\|\phi(\vec m)\res [m_{i_0},q_1)\|\geq 4K$ and $\|\phi(\vec n)\res [p_1,p_2)\|< 2K$ and since $[m_{i_0},q_1)\subseteq [p_1,p_2)$ we get $\|\phi(\vec m)-\phi(\vec n)\|> 2K$. However, $\|\sum_{i=1}^k s_{m_i}-\sum_{i=1}^k s_{n_i}\|=1$ and hence $\|\phi(\vec m)-\phi(\vec n)\|\leq K$, a contradiction. Therefore, on the homogeneous side, we must have an $H$ where the tuples satisfy $\|\phi(\vec n)\res [n_{i_0},p_1)\|< 4K$.

By an interlacing argument similar to the previous paragraph, we may further find a homogeneous $H$ so that in addition $\|\phi(\vec n)\res [p_2,n_{i_0+1})\|< 4K$. Thus we arrive at a contradiction to our choice of $i_0$, and there can be no such coarse embedding.
\end{proof}

\section{Asymptotic linearization}

The starting point for the main results of this paper is the following which we call the {\em asymptotic linearization of non-linear maps} into Banach spaces. The theorem can be stated in various settings but the general principle is the following. Suppose $\phi:(\N^k,d)\to X$ is a bounded map into a {\em separable dual Banach space} and $d$ is an arbitrary metric. Then one can stabilize the map on an asymptotic structure of $X$ so that when restricted to the asymptotic structure the map is `linear'. By linear we mean that the tuples $\vec{n}=(n_1, \ldots, n_k)$ can be chosen so that the resulting map $\phi(\vec{n})$ is a sum $h_0+\sum_{i=1}^k h(n_1, \ldots, n_i)$ of `block' vectors in $X$. In some sense the map $\phi$ asymptotically becomes `a formal identity'. Theorem \ref{linearization} is stated in terms of filters which have the advantage of generality. We use Theorem \ref{linearization} to derive Corollary \ref{lin-ultra} and \ref{lin-Ramsey} which are stated before the main proof. Afterward, we derive Corollary \ref{lin-asy} which is stated in terms of general asymptotic structure and will be important for Section \ref{applications}. Since every separable dual Banach space embeds into a space with a boundedly complete basis \cite{DFJP}, for our purposes we may assume without loss of generality that $X$ has a boundedly complete basis.


\begin{theorem}\label{linearization}
 Let $X$ be a  Banach space with a {\em
boundedly complete} basis $(e_i)$. Let $\phi:(\N^k,d)\to X$ be a bounded map
where $d$ is a metric on $\N^k$. Let $\sU$ be a non-principal filter on
$\N$.  Then for all $\ep>0$ the vectors $\phi(n_1, \ldots, n_k)$ satisfy the following:

(i) There exist $n^+_0\in\N$ and $h_0< n^+_0$ (i.e., $h_0\in [e_i]_{i< n^+_0}$)  such that 
\begin{align*}
&\text{For all $l_1>n^+_0$ for $\sU$-positive $n_1>l_1$ there exist $l_1<h(n_1)<n^+_1$}\\
&\text{\qquad\qquad\qquad\qquad for some $n^+_1>n_1$} \\
&\text{for all $l_2>n^+_1$ for $\sU$-positive $n_2>l_2$ there exist $l_2<h(n_1, n_2)<n^+_2$}\\
&\text{\qquad\qquad\qquad\qquad for some $n^+_2>n_2$}\\
&\qquad\qquad\qquad\qquad\qquad\vdots\\
&\text{for all $l_{k}>n^+_{k-1}$ for $\sU$-positive $n_k>l_{k}$ there exist}\\
&\text{$l_{k}<h(n_1, \ldots, n_k)<n^+_k$ for some $n^+_k>n_k$}
\end{align*}
 such that
 
 \begin{align}
&\norm{\phi(n_1, \ldots, n_k)-h_0-\sum_{i=1}^kh(n_1, \ldots, n_i)}< \ep.\label{permissible-sum}
\end{align}

(ii) If $\phi$ is $K$-Lipschitz then we have
\begin{align}\label{norm-bound-h_j}
\|h(n_1,\ldots, n_j)\|\le K d((n_1, \ldots, n_k),(m_1, \ldots, m_k))+\ep,\ 1\le j\le k,
\end{align}
 for some pairs satisfying $n_1=m_1, \ldots, n_{j-1}=m_{j-1}< n_j<m_j<n_{j+1}< m_{j+1}\ldots
 n_k<m_k$.

(iii) Moreover,  suppose $X^*=[e^*_i]\oplus
Z^*$ for some $Z^*\subseteq X^*$, and  $\phi^*(n_1, \ldots, n_k)$ are norming
functionals for $\phi(n_1, \ldots, n_k)$. Then there exist $h^*_0$ and tuples $(h^*(n_1, \ldots, n_i))_{i=1}^k$ whose supports with respect to $(e^*_i)$ satisfy $l_{i}<h^*(n_1, \ldots, n_i)<n^+_i$ as in (i) and $z^*_0, z^*(n_1), \ldots, z^*(n_1,\ldots, n_k)\in Z^*$ such that

\begin{equation}\label{permissible-norming}
\norm{\phi^*(n_1, \ldots, n_k)-h^*_0-\sum_{i=1}^k h^*(n_1, \ldots, n_i)-z^*(n_1, \ldots, n_k)}<\ep
\end{equation}
and for $j\le k$

$$\Big|\sum_{i=1}^{l_{j+1}}z^*(n_1,\ldots,n_k)(e_i)-\sum_{i=1}^{l_{j+1}}z^*(n_1,\ldots,n_j)(e_i)\Big|<\ep.
$$
 
\end{theorem}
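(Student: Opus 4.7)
The plan is to exploit that a boundedly complete basis $(e_i)$ makes $X$ naturally a dual space, $X\cong[e_i^*]^*$ with predual $[e_i^*]\subseteq X^*$, so bounded subsets of $X$ are weak$^*$-compact. Extend $\sU$ to an ultrafilter $\sV$ on $\N$ and define iterated weak$^*$-limits
\[
\Psi_j(n_1,\ldots,n_j) := w^*\text{-}\lim_{n_{j+1}\in\sV}\cdots w^*\text{-}\lim_{n_k\in\sV}\phi(n_1,\ldots,n_k),\qquad 0\le j\le k,
\]
so that $\Psi_0$ is a fixed vector, $\Psi_k=\phi$, and the differences $g_j:=\Psi_j-\Psi_{j-1}$ give the identity $\phi(\vec n)=\Psi_0+\sum_{j=1}^k g_j(n_1,\ldots,n_j)$ pointwise. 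The crucial property built into this definition is that for each fixed $(n_1,\ldots,n_{j-1})$ the bounded function $n_j\mapsto g_j(n_1,\ldots,n_j)$ weak$^*$-converges to $0$ along $\sV$. Write $P_n$ for the canonical basis projection onto $[e_i]_{i\le n}$.

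For (i), I would approximate each summand by a finitely supported block. Choose $n_0^+$ so that $h_0:=P_{n_0^+-1}\Psi_0$ satisfies $\|h_0-\Psi_0\|<\ep/(2(k+1))$. Inductively, having fixed $n_1,\ldots,n_{j-1}$ and $n_0^+,\ldots,n_{j-1}^+$, for any $l_j>n_{j-1}^+$ the $w^*$-convergence of $g_j(n_1,\ldots,n_{j-1},\cdot)$ to $0$ forces coordinatewise convergence on the finitely many functionals $e_1^*,\ldots,e_{l_j}^*\in[e_i^*]$, so the set of $n_j$ with $\|P_{l_j}g_j(n_1,\ldots,n_j)\|<\ep/(4(k+1))$ lies in $\sV$, hence is $\sU$-positive. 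For any such $n_j$, since the basis expansion of $g_j(n_1,\ldots,n_j)$ converges in norm, we can take $n_j^+>n_j$ with $\|(I-P_{n_j^+-1})g_j(n_1,\ldots,n_j)\|<\ep/(4(k+1))$. Then $h(n_1,\ldots,n_j):=(P_{n_j^+-1}-P_{l_j})g_j(n_1,\ldots,n_j)$ is supported in $(l_j,n_j^+)$, and telescoping the errors over $j=0,\ldots,k$ yields (i).

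For (ii), when $\phi$ is $K$-Lipschitz, the construction can be arranged to produce within the same $\sU$-positive set (for a common $l_j$) a pair $n_j<m_j$ together with witnesses $n_j^+<m_j^+$ so that the blocks produced at stage $j$ for $\vec n$ and for the companion tuple $\vec m=(n_1,\ldots,n_{j-1},m_j,\ldots)$ are supported on disjoint intervals. Iterating gives a pair $\vec n,\vec m$ agreeing on coordinates $<j$, interlacing thereafter, and whose blocks at every stage $\ge j$ have pairwise disjoint supports. Then $\phi(\vec n)-\phi(\vec m)$ is, up to a $2\ep$-error, a sum of these disjointly supported blocks containing $h(n_1,\ldots,n_j)$ as one summand, so projecting onto $(l_j,n_j^+)$ isolates $h(n_1,\ldots,n_j)$ and bounds its norm by $Kd(\vec n,\vec m)+\ep$.

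For (iii), apply the same scheme to the norming functionals $\phi^*(\vec n)\in X^*$, which form a bounded and hence $w^*$-compact set. The iterated $w^*$-limits along $\sV$ in $X^*$ produce $\phi^*(\vec n)\approx\Psi_0^*+\sum_{j=1}^k g_j^*(n_1,\ldots,n_j)$; splitting each $g_j^*$ via $X^*=[e_i^*]\oplus Z^*$ and approximating the $[e_i^*]$-part by a block $h^*(n_1,\ldots,n_j)$ supported in the same window $(l_j,n_j^+)$ used in (i) (the two inductions run in parallel on the same witnesses), the $Z^*$-pieces accumulate into $z^*(n_1,\ldots,n_k)\in Z^*$, with partial sums $z^*(n_1,\ldots,n_j)$ given by the same accumulation truncated at stage $j$. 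The final stability inequality of (iii) is then the statement that, tested against the fixed vector $\sum_{i\le l_{j+1}}e_i\in X$, the incremental $Z^*$-contributions added at stages $j+1,\ldots,k$ sum to less than $\ep$; this is immediate from the $w^*$-smallness of the later $g_i^*$-differences on fixed initial basis vectors. I expect the main obstacle to be running the primal and dual inductions in lockstep so that the witnesses $l_j,n_j^+$ simultaneously support the approximation in (i), the interlacing needed for (ii), and the $Z^*$-bookkeeping of (iii).
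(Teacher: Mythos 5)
Your proposal follows essentially the same route as the paper's proof: iterated weak$^*$ limits in the dual $[e_i^*]^*\cong X$ (the paper takes $\sU$-cluster points, you extend $\sU$ to an ultrafilter $\sV$, which is equivalent since $\sV$-large sets are $\sU$-positive), window projections $P_{(l_j,n_j^+]}$ to manufacture the blocks, interlaced companion tuples $\vec m$ for (ii), and a simultaneous construction in $X\times X^*$ for (iii). The one point to watch in (ii) is that the interval projection $P_{n_j^+-1}-P_{l_j}$ only has norm $2$ in general, so to obtain the constant $K$ rather than $2K$ you should observe, as the paper does, that $P_{l_j}\bigl(\phi(\vec n)-\phi(\vec m)\bigr)$ is already small (both tuples agree with the cluster point $\phi(n_1,\ldots,n_{j-1})$ on those coordinates), so only the norm-one projection $P_{n_j^+-1}$ is actually needed.
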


\begin{remark}
We don't have an application for (\ref{permissible-norming}) above in its general form other than for reflexive spaces, $Z^*=0$, or in the case that $Z^*$ is finite dimensional.
\end{remark}

Suppose $\sU$ is an ultrafilter on $\N$ and $T\subseteq \N^{<k}$ is a tree of height $k$. 
We say $T$ is {\em $\sU$-large} if for any $(n_1,\dots,n_i)\in T$, 
$\{ n_{i+1}\colon (n_1,\dots,n_i,n_{i+1})\in T\} \in \sU$ (so, in particular
$\{ n_1\colon (n_1)\in T\} \in \sU$).

\begin{corollary}\label{lin-ultra}
If $\sU$ is an ultrafilter, then there is a $\sU$ large tree $T$ such that the statements (i), (ii), and (iii) of Theorem \ref{linearization} hold for all $(n_1, \ldots, n_k)\in T$.
\end{corollary}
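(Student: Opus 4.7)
The plan is to unroll the nested quantifier structure of Theorem~\ref{linearization}(i) into a single tree $T$, using that for an ultrafilter $\sU$ the $\sU$-positive sets coincide with the members of $\sU$. First I would apply Theorem~\ref{linearization} once to obtain $n_0^+$ and $h_0$. Then I would build $T$ by induction on the level $i$: at the root, set $l_1 := n_0^+ + 1$ and declare the children of the root to be the set $A_\emptyset$ of those $n_1 > l_1$ for which witnesses $h(n_1)$ and $n_1^+$ exist satisfying the conclusion of the theorem; this set is $\sU$-positive by (i), hence lies in $\sU$. For a node $(n_1, \ldots, n_i)$ already placed in $T$, with its associated $n_i^+$ recorded from the step that produced it, set $l_{i+1} := n_i^+ + 1$ and take its children to be the $\sU$-positive set $A(n_1, \ldots, n_i) \in \sU$ of valid $n_{i+1}$'s supplied by the theorem. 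By construction $T$ is $\sU$-large, and (i) together with the support conditions holds along every branch.

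For (iii), Theorem~\ref{linearization} produces the decomposition $h_0^* + \sum h^*(n_1, \ldots, n_i) + z^*(n_1, \ldots, n_k)$ at the same inductive step, so the very same $\sU$-positive sets used to build $T$ already carry the $h^*$ and $z^*$ data, and no further refinement is needed. For (ii), once $T$ is built I would exploit that each $A(n_1, \ldots, n_{j-1})$ is in $\sU$ and hence infinite: pick a pair $n_j < m_j$ inside it, then extend both with a common tail through $T$ so that the resulting $k$-tuples agree in coordinates $j+1, \ldots, k$ (which is possible because the subtrees below $(n_1, \ldots, n_{j-1}, n_j)$ and $(n_1, \ldots, n_{j-1}, m_j)$ both have every set of children in $\sU$, in particular nonempty). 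The Lipschitz bound from Theorem~\ref{linearization}(ii) then yields \eqref{norm-bound-h_j} for each branch of $T$.

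The main, and essentially only, issue is bookkeeping: a single tree must simultaneously witness (i), (ii), and (iii). This is not a serious obstacle because Theorem~\ref{linearization} delivers all three conclusions together from the same $\sU$-positive sets produced at each step, so no pruning of $T$ is required to pass from one to the next. The one point worth noting is that the child sets depend on the path through $T$ via the integer $n_i^+$, but this is exactly what the definition of a $\sU$-large tree permits, since $\sU$-largeness is formulated node by node rather than uniformly across a level.
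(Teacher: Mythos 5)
Your proposal is correct and is essentially the paper's (implicit) argument: the corollary is read off directly from the nested quantifier structure of Theorem~\ref{linearization}(i), using that for an ultrafilter every $\sU$-positive set belongs to $\sU$, so the sets of admissible successors at each node form a $\sU$-large tree; and, as you note, $\sU$-largeness is a node-by-node condition, so the dependence of the child sets on $n_i^+$ along the branch is harmless. One small correction on (ii): the theorem's estimate \eqref{norm-bound-h_j} is stated for pairs with \emph{interlacing} tails $n_j<m_j<n_{j+1}<m_{j+1}<\cdots<n_k<m_k$, not common tails, and its proof needs the coordinates $m_j,\dots,m_k$ to be drawn from fresh $\sU$-positive sets so that $\phi(n_1,\dots,n_{j-1},m_j,\dots,m_k)$ approximates the cluster point $\phi(n_1,\dots,n_{j-1})$ on the coordinates up to $n_j^+$ — so one cannot in general force $m_l=n_l$ for $l>j$; but no separate argument is needed here anyway, since (ii) is an existential statement that the theorem already delivers for every tuple arising from the quantifier scheme, hence for every tuple of $T$.
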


If $\sU$ is a Ramsey ultrafilter, then any $\sU$-large tree $T\subseteq \N^{<k}$ contains 
a {\em $\sU$-homogeneous} tree $T' \subseteq \N^{<k}$, that is, a tree $T'$ such that 
for some $H\in \sU$, $H^k \subseteq T'$. To see this, partition the 
tuples $\vec n=(n_1,\dots,n_k)$ according to whether $\vec n \in T$. Suppose that $H$
were homogeneous for the contrary side. Using the fact that $T$ is $\sU$-large and $H\in \sU$, we can 
easily build a tuple $(n_1,\dots,n_k)$ in $T$ with $n_1,\dots,n_k\in H$, a contradiction. 

\begin{corollary}\label{lin-Ramsey}
If $\sU$ is a Ramsey ultrafilter, then there is a $H\in \sU$  such that the statements (i), (ii), and (iii) of Theorem \ref{linearization} hold for all $(n_1, \ldots, n_k)\in H^k$.
\end{corollary}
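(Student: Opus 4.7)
The plan is to deduce this quickly from Corollary \ref{lin-ultra}. First I would apply that corollary to the Ramsey ultrafilter $\sU$ to obtain a $\sU$-large tree $T\subseteq \N^{<k}$ on which conclusions (i), (ii), (iii) of Theorem \ref{linearization} already hold for every maximal branch $(n_1,\ldots,n_k)\in T$. It then suffices to find some $H\in \sU$ with $[H]^k\subseteq T$, since the desired conclusions will then automatically transfer to all increasing $k$-tuples from $H$.

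To extract such an $H$, I would define a two-valued partition $f\colon[\N]^k\to 2$ by $f(\vec n)=1$ if $\vec n\in T$ and $f(\vec n)=0$ otherwise. By the Ramsey property of $\sU$, there exists $H\in \sU$ homogeneous for $f$. The main step is to rule out that $H$ is homogeneous on the value $0$. Assuming for contradiction that $[H]^k\cap T=\emptyset$, one can use $\sU$-largeness of $T$ to inductively build a contradiction: given $(n_1,\ldots,n_i)\in T$ with all entries in $H$, the set $\{n_{i+1} : (n_1,\ldots,n_i,n_{i+1})\in T\}$ belongs to $\sU$ and hence meets $H\cap(n_i,\infty)\in \sU$, allowing us to extend to $(n_1,\ldots,n_{i+1})\in T$ with all entries in $H$. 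Iterating $k$ times produces a $k$-tuple in $[H]^k\cap T$, a contradiction. Hence $H$ must be homogeneous on the value $1$, and the conclusions of Theorem \ref{linearization} hold on all of $H^k$.

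There is essentially no obstacle here beyond what has already been set up: the argument is precisely the standard observation noted just before the statement of the corollary, that a Ramsey ultrafilter promotes any $\sU$-large tree to a $\sU$-homogeneous one. The existence of the Ramsey ultrafilter itself is underwritten by the earlier Fact under $\ch$, and in $\zfc$ alone via the absoluteness principle of Section \ref{ch}.
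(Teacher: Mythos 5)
Your proposal is correct and is essentially identical to the paper's own argument: the paper derives Corollary \ref{lin-Ramsey} from Corollary \ref{lin-ultra} by exactly the partition $\vec n\mapsto[\vec n\in T]$, obtaining a homogeneous $H\in\sU$ and ruling out the negative side by building a tuple of $T$ inside $H$ via $\sU$-largeness. Your write-up merely makes explicit the inductive extension step that the paper describes as ``we can easily build a tuple,'' so there is nothing to correct.
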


Note that for any ultrafilter $\sU$, every $\sU$-homogeneous tree is $\sU$-large. 
Later in the paper we will work with Ramsey ultrafilters, and we can then use 
Theorem~\ref{linearization} to get homogeneous 
sets $H\subseteq \N$ for which all $k$-tuples from $H$ satisfy the inequalities.

\begin{proof}
Let $k\in\N$ and $\phi:\N^k\to X$ be given as in the statement. We will show the construction of \ref{permissible-sum} and
\ref{permissible-norming} simultaneously. \ref{permissible-sum} and
\ref{norm-bound-h_j} do not use the additional assumption on $X^*$.

Consider the weak-star topology on $X$. Then the unit ball of $X$ is compact with respect to this topology. Let $\phi(n_1, \ldots, n_{k-1})$ be a cluster point of the sequence $\phi(n_1, \ldots, n_{k-1}, n_{k})$ with respect to $\sU$, which we denote by 
$$\phi(n_1, \ldots, n_{k-1})=w^*-\text{cl}_{n_k, \sU}\ \phi(n_1, \ldots, n_{k-1}, n_{k}))$$

Consider the iterated limits 

\begin{equation}\label{iterated}
\begin{aligned}
\phi(n_1, \ldots, n_{k-1})&:=w^*-\text{cl}_{n_k, \sU}\ \phi(n_1, \ldots,n_{k-1}, n_k);\\
\phi(n_1, \ldots, n_{k-2})&:=w^*-\text{cl}_{n_{k-1}, \sU}\ \phi(n_1, \ldots,n_{k-2}, n_{k-1});\\
&\vdots\\
\phi(n_1)&:=w^*-\text{cl}_{n_2, \sU}\ \phi(n_1, n_2);\\
\Phi&:=w^*-\text{cl}_{n_1, \sU}\ \phi(n_1).
\end{aligned}
\end{equation}

For any $(n_1,\ldots, n_k)\in \N^k$, let $\phi^*(n_1,\ldots, n_k)$ with
$\|\phi^*(n_1,\ldots, n_k)\|=1$ denote a norming functional for
$\phi(n_1,\ldots, n_k)$. By our assumption the functionals are of the form
$\phi^*(n_1,\ldots, n_k)=x^*(n_1,\ldots, n_k)+z^*(n_1,\ldots, n_k)$ for some
$x^*(n_1,\ldots, n_k)\in [e^*_i]$ and $z^*(n_1,\ldots, n_k)\in Z^*$.

In the following, the cluster points are chosen simultaneously in $X$ and $X^*$. That is, 
consider the product space $X\times X^*$ with the corresponding weak-star topologies. We let $(\phi(n_1, \ldots, n_{k-1}), \phi^*(n_1, \ldots, n_{k-1}))$ be a cluster point of the sequence $$(\phi(n_1, \ldots, n_{k-1}, n_{k}),\phi^*(n_1, \ldots, n_{k-1},n_k))_{n_k}$$ with respect to $\sU$.

In particular, we have

\begin{equation}\label{iterated}
\begin{aligned}
\phi^*(n_1, \ldots, n_{k-1})&=w^*-\text{cl}_{n_k, \sU}\ \phi^*(n_1, \ldots,n_{k-1}, n_k);\\
\phi^*(n_1, \ldots, n_{k-2})&=w^*-\text{cl}_{n_{k-1}, \sU}\ \phi^*(n_1, \ldots,n_{k-2}, n_{k-1});\\
&\vdots\\
\phi^*(n_1)&=w^*-\text{cl}_{n_2, \sU}\ \phi^*(n_1, n_2);\\
\Phi^*&=w^*-\text{cl}_{n_1, \sU}\ \phi^*(n_1).
\end{aligned}
\end{equation}

where these limits are of the form 
\begin{equation*}
\begin{aligned}
\phi^*(n_1, \ldots, n_{k-1})&=x^*(n_1, \ldots, n_{k-1})+z^*(n_1, \ldots, n_{k-1})\\
 \vdots\\
\phi^*(n_1)&=x^*(n_1)+z^*(n_1)\\
\Phi^*&=x^*_0+z^*_0
\end{aligned}
\end{equation*}
for some $x^*(n_1, \ldots, n_{k-1}), \ldots, x^*(n_1), x^*_0\in [e^*_i]$, and
 $z^*(n_1, \ldots, n_{k-1}), \ldots, z^*(n_1), z^*_0\in Z^*$.

Let $\ep>0.$ Let $n^+_0$ be such that

\begin{equation*}
\|\Phi-P_{n^+_0}(\Phi)\|<\frac{\ep}{3k}\ \text{and}\ \|x^*_0-P^*_{n^+_0}(x^*_0)\|<\frac{\ep}{3k}
\end{equation*} 
where $P, P^*$ are basis projections,
and put
\begin{equation*}
h_0:=P_{n^+_0}(\Phi)\ \text{and}\ h^*_0:=P^*_{n^+_0}(x^*_0).
\end{equation*} 

Let $l_1>n^+_0$. Note that 
$$H_1:=\{n_1\in\N: |(\Phi-\phi(n_1))(e^*_i)|,
|(x_0^*-x^*(n_1))(e_i)|, |(z_0^*-z^*(n_1))(e_i)|<\frac{\ep}{3kl_1}, i\le l_1\}$$
is a $\sU$-positive set.
Thus for all $n_1\in H_1$ we have

\begin{equation}\label{m_1}
\begin{aligned}
\|P_{(0,l_1]}[\Phi-\phi(n_1)]\|<\frac{\ep}{3k}
,\ \|P^*_{(0,l_1]}[x_{0}^*-x^*(n_1)]\|<\frac{\ep}{3k},\ \sum_{i=1}^{l_1}|(z_0^*-z^*(n_1)(e_i)|<\frac{\ep}{3k}
\end{aligned} 
\end{equation}

Let $n_1\in H_1$ and $n^+_1> n_1$ be such that 

\begin{align}\label{n^+_1}
\|\phi( n_1)-P_{n^+_1}\phi( n_1)\|<\frac{\ep}{3k}\  \text{and}\ \|x^*( n_1)-P^*_{n^+_1}x^*( n_1)\|<\frac{\ep}{3k}. 
\end{align}
Put 
\begin{align}
h(n_1):=P_{(l_1, n^+_1]}\phi( n_1)\ \ &\text{and}\ \ h^*(n_1):=P^*_{(l_1, n^+_1]}x^*( n_1)
\end{align}

Thus we have constructed blocks $h_0, h^*_0$ and, for $\sU$-positive $n_1>l_1$,
blocks $h(n_1), h^*(n_1)$ whose supports satisfy $h_0, h^*_0<n^+_0<l_1<h(n_1),
h^*(n_1)<n^+_1$.  From the construction we have the estimates

\begin{align*}
&\|\phi( n_1)-h_0-h(n_1)\|<\ep/k,\\
&\|\phi^*(n_1)-h_0^*-h^*(n_1)-P^*_{(0,l_1]}z_0^*-P^*_{(l_1,\infty)}z^*(n_1)\|<\ep/k.
\end{align*}

Note that since $\supp(h^*_0)\le n^+_0$, on the `gap' $(n^+_0,l_1]$ we have that
$\phi^*(n_1)$ is essentially equal to $z^*_0$.  

We proceed inductively in similar fashion. Suppose that for some $1\le j< k$ the statement of the Theorem holds for $\phi_0, \phi(n_1), \ldots, \phi(n_1, \ldots, n_j) $ and $\phi^*_0, \phi^*(n_1), \ldots, \phi^*(n_1, \ldots, n_j) $ where
\begin{align*}
&h_0, h^*_0<n^+_0<l_1<h(n_1), h^*(n_1)<n^+_1<l_2<\cdots <l_{j}<h(n_1, \ldots, n_j), h^*(n_1, \ldots, n_j)<n^+_j
\end{align*}
 are determined
 for $\sU$-positive $n_i$ (in the iterated sense as in the statement of Theorem) and arbitrary choices of $l_i$'s so that 
 \begin{equation} \label{h_j-support}
\begin{aligned}
h(n_1, \ldots, n_i):&=P_{(l_i, n^+_i]}[\phi( n_1, \ldots,  n_{i})],\\ h^*(n_1, \ldots, n_i):&=P^*_{(l_i, n^+_i]}[x^*( n_1, \ldots,  n_{i})],\ i\le j
\end{aligned}
\end{equation}
and we have the estimates 
\begin{equation}\label{norm-estimate-step-j}
\begin{aligned}
&\norm{h_0+\sum_{i=1}^{j} h(n_1, \ldots, n_i)-\phi( n_1, \ldots,  n_{j})}<j\ep/k,
\end{aligned}
\end{equation}
 
\begin{equation}\label{norm-estimate-step-j*}
\norm{\phi^*(n_1, \ldots, n_j)-h^*_0-\sum_{i=1}^j h^*(n_1, \ldots, n_i)-z^*(n_1, \ldots, n_j)}<j\ep/k.
\end{equation} 
 
Let $l_{j+1}>n^+_j$ be arbitrary. Again, using the fact that $\phi( n_1,
\ldots,  n_{j})$ and $\phi^*( n_1, \ldots,  n_{j})$ are simultaneous cluster points of $(\phi( n_1, \ldots, n_{j+1}))_{n_{j+1}}$ 
and  $(\phi^*( n_1, \ldots, n_{j+1}))_{n_{j+1}}$, 
\begin{align*}
H_{j+1}&:=\{n_{j+1}\in\N: |(\phi(n_1,\ldots,n_j)-\phi(n_1,\ldots,n_{j+1}))(e^*_i)|<\frac{\ep}{3kl_{j+1}}, 0<i\le l_{j+1}\} \\
&\cap\{n_{j+1}\in\N:|(x^*(n_1,\ldots,n_j)-x^*(n_1,\ldots,n_{j+1}))(e_i)|<\frac{\ep}{3kl_{j+1}}, 0<i\le l_{j+1}\}\\
&\cap\{n_{j+1}\in\N:|(z^*(n_1,\ldots,n_j)-z^*(n_1,\ldots,n_{j+1}))(e_i)|<\frac{\ep}{3kl_{j+1}}, 0<i\le l_{j+1}\}
\end{align*}
is $\sU$-positive set.
Thus for $\sU$-positive $n_{j+1}>l_{j+1}$

\begin{equation}\label{jth-bump-start}
\begin{aligned}
\|P_{l_{j+1}}[\phi( n_1, \ldots,  n_{j})-\phi( n_1, \ldots,  n_{j+1})]\|&<\frac{\ep}{3k}\ \ \text{and}\\ 
\|P^*_{l_{j+1}}[x^*( n_1, \ldots,  n_{j})-x^*( n_1, \ldots,  n_{j+1})]\|&<\frac{\ep}{3k},\\
\sum_{i=1}^{l_{j+1}}|(\phi^*(n_1,\ldots,n_j)-\phi^*(n_1,\ldots,n_{j+1}))(e_i)|&<\frac{\ep}{3k}
\end{aligned}
\end{equation}
Let $n^+_{j+1}>l_{j+1}$ be such that
\begin{equation}\label{jth-bump-end}
\begin{aligned}
\|P_{n^+_{j+1}}[\phi( n_1, \ldots,  n_{j+1})]-\phi( n_1, \ldots,  n_{j+1})\|<\frac{\ep}{3k}\\
\|P^*_{n^+_{j+1}}[x^*( n_1, \ldots,  n_{j+1})]-x^*( n_1, \ldots,  n_{j+1})\|<\frac{\ep}{3k}.
\end{aligned}
\end{equation}
We put

\begin{align*}
h(n_1,\ldots,n_{j+1}):&=P_{(l_{j+1},n^+_{j+1}]}[\phi( n_1, \ldots,  n_{j+1})]\\
 h^*(n_1,\ldots,n_{j+1}):&=P^*_{(l_{j+1},n^+_{j+1}]}[x^*( n_1, \ldots,  n_{j+1})].
\end{align*}

It is easy to check that we have the desired estimates analogous to
\ref{norm-estimate-step-j} and \ref{norm-estimate-step-j*} for $j+1$. Thus
\ref{permissible-sum} and \ref{permissible-norming} are proved by induction.  

Note that since $l_i<\supp(h^*(n_1, \ldots, n_i))\le n^+_i$, on the gaps
$(n^+_{i-1},l_i]$ we have that $\phi^*(n_1, \ldots, n_i)$ is essentially equal
to $z^*(n_1,\ldots, n_i)$ for all $i\le k$.

We now show \ref{norm-bound-h_j}. Fix $1\le j\le k$. Then by \ref{h_j-support},
\ref{jth-bump-start} and \ref{jth-bump-end} we have
\begin{align*}
\|h(n_1,&\ldots,n_j)\|
=\|P_{n_j^+}[\phi( n_1,\ldots,  n_{j})]- P_{l_j}[\phi( n_1,\ldots,  n_{j})]\|\\
&\le\|P_{n_j^+}[\phi( n_1,\ldots,  n_{j})-\phi( n_1,\ldots,  n_{j-1})] -P_{l_j}[\phi( n_1,\ldots,  n_{j})-\phi( n_1,\ldots,  n_{j-1})]\| +\frac{\ep}{3k}\\
&\le\|P_{n_j^+}[\phi( n_1,\ldots,  n_{j})-\phi( n_1,\ldots,  n_{j-1})]\|+\frac{2\ep}{3k}
\end{align*}
(For $j=k$, replace $P_{n_j^+}[\phi( n_1,\ldots,  n_{j})]$ above by $\phi(
n_1,\ldots,  n_{k})$.) By picking $n_j<m_j$ in the $\sU$-positive set $H_j$ and subsequently picking $n_{l}$ and $m_l$ for $l>j$ from their corresponding $\sU$-positive sets so that   $ n_j<m_j<n_{j+1}<m_{j+1}\ldots   < n_k< m_k$ and   
$$\|P_{n_j^+}[\phi( n_1,\ldots,  n_{j-1},m_{j}, \ldots, m_k)-\phi( n_1,\ldots,
n_{j-1})]\|<\frac{\ep}{6k}.$$

Then continuing the above inequalities
\begin{align*}
&\|P_{n_j^+}[\phi( n_1,\ldots,  n_{j})-\phi( n_1,\ldots,  n_{j-1})]\|+\frac{2\ep}{3k}\\
&\le \|P_{n_j^+}[\phi( n_1,\ldots,  n_{j-1},  n_{j},\ldots,  n_k)-\phi( n_1,\ldots,  n_{j-1}, m_{j}, \ldots, m_k)]\|+\frac{\ep}{k}\\
&\le\|\phi( n_1,\ldots,  n_{j-1},  n_{j},\ldots,  n_k)-\phi( n_1,\ldots,  n_{j-1}, m_{j}, \ldots, m_k)\|+\frac{\ep}{k} \\
&\le K d\Big(( n_1, \ldots,  n_{j-1}, n_j, \ldots, n_k),(n_1, \ldots, n_{j-1},m_j, \ldots, m_k)\Big)+\frac{\ep}{k}.
\end{align*}

This proves \ref{norm-bound-h_j}. Note that we do
 not have a similar estimate for $\|h_0\|$.
\end{proof}

Theorem \ref{linearization} can be stated in terms of the asymptotic structure due to Maurey, Milman, and
Tomczak-Jaegermann \cite{MMT}. We recall this notion with respect to a basis. 
Let $X$ be a separable Banach space with a basis $(e_i)$.
We say that a normalized monotone basis $(u_j)_{j=1}^n$ is {\em an asymptotic
space} for $X$ with respect to $(e_i)$, denoted by $(u_j)_{j=1}^n\in\{X\}_n$, if
for all $\ep>0$, the vector player has a winning strategy in a two player game
of length $n$ where in the $j$th move the subspace player picks a tail subspace
$X_{k_j}=[e_i]_{i\ge k_j}$ and the vector player responds by picking a
normalized vector $x_j\in X_{k_j}$ (we write as $k_j\le x_j$) so that the
resulting sequence $(x_j)_{j=1}^n$ is a block sequence of $(e_i)$ which is $(1+\ep)$-equivalent to $(u_j)_{j=1}^n$.
Thus $(u_i)_{i=1}^n\in \{X\}_n$ with respect to $(e_i)$ if for all
$\ep>0$
\begin{equation*}
\forall k_1\ \exists x_1\ge k_1\ \forall k_2\ \exists x_2\ge k_2 \ \ldots\ \forall k_n\ \exists x_n\ge k_n\ \text{such that}\ (x_j)_{j=1}^n\stackrel{1+\ep}\sim (u_j)_{j=1}^n.
\end{equation*}

Another way of expressing $(u_j)_{j=1}^k\in \{X\}_k$ is that  for all $\ep>0$
there exists a (countable) infinitely branching block tree $\cT_{k}=\{(x_{n_1},
\ldots, x_{n_j})): j\le k\}$ of height $k$ such that each branch $(x_{n_1},
\ldots, x_{n_j})\stackrel{1+\ep}\sim (u_j)_{j=1}^n.$

The collection of $\{X\}_n$'s for all $n$ is referred as {\em the asymptotic
structure} of $X$ (with respect to $(e_i)$), and the block vectors
$x_i$'s (the winning moves of the vector player) are called {\em permissible
vectors}. The subspace player has a winning strategy to play tail subspaces
$X_{k_j}$ so that {\em for every} normalized $x_j\in X_{k_j}$, we have
$(x_j)_{j=1}^n\stackrel{1+\ep}\sim (u_j)_{j=1}^n$ {\em for some}
$(u_j)_{j=1}^n\in \{X\}_n$. Thus if $x_i$'s can be chosen arbitrarily far out we
may assume $(x_i)_{i=1}^n$ is permissible.

As an immediate consequence of Theorem \ref{linearization} by taking $\sU$ to be the Frechet filter we get 

\begin{corollary}\label{lin-asy} Let $X$ be a  Banach space with a boundedly complete basis $(e_i)$. Let $\phi:(\N^k,d)\to X$ be a bounded map
where $d$ is a metric on $\N^k$.

(i) For all $\ep>0$ there exists a block vector $h_0\in X$ for all $l_1>h_0$ there exist $n_1$ and $h(n_1)>l_1$ for all $l_2>h(n_1)$ there exists $n_2$ and $h(n_1,n_2)>l_2$ so on so that for all $l_k>h(n_1, \ldots, n_{k-1})$ there exist $n_k$ and $h(n_1, \ldots, n_k)>l_k$ so that $(h(n_1,\ldots, n_i))_{i=1}^k$ is permissible and

 \begin{align*}
&\norm{\phi(n_1, \ldots, n_k)-h_0-\sum_{i=1}^kh(n_1, \ldots, n_i)}< \ep.\label{permissible-sum}
\end{align*}

(ii) If $\phi$ is $K$-Lipschitz then we have
\begin{align*}
\|h(n_1,\ldots, n_j)\|\le K d((n_1, \ldots, n_k),(m_1, \ldots, m_k))+\ep,\ 1\le j\le k,
\end{align*}

 for some pairs satisfying $n_1=m_1< \ldots< n_{j-1}=m_{j-1}< n_j<m_j<n_{j+1}<m_{j+1}<\ldots <
 n_k<m_k$.

(iii) Moreover,  suppose $X^*=[e^*_i]\oplus
Z^*$ for some $Z^*\subseteq X^*$, and  $\phi^*(n_1, \ldots, n_k)$ be a norming
functional for $\phi(n_1, \ldots, n_k)$. Then for all $\ep>0$ there exist $h^*_0$ and permissible tuple $(h^*(n_1, \ldots, n_i))_{i=1}^k$ with respect to $(e^*_i)$ as in (i) such that 

$$\langle h^*(n_1, \ldots, n_i), h(n_1, \ldots, n_j)\rangle =0,\ i\not= j, \ 1\le i,j\le k,$$ and $z^*_0, z^*(n_1), \ldots, z^*(n_1,\ldots, n_k)\in Z^*$ such that

 \begin{equation*}
\norm{\phi^*(n_1, \ldots, n_k)-h^*_0-\sum_{i=1}^k h^*(n_1, \ldots, n_i)-z^*(n_1, \ldots, n_k)}<\ep
\end{equation*}
and for $j\le k$

$$\Big|\sum_{i=1}^{l_{j+1}}z^*(n_1,\ldots,n_k)(e_i)-\sum_{i=1}^{l_{j+1}}z^*(n_1,\ldots,n_j)(e_i)\Big|<\ep.
$$
 
\end{corollary}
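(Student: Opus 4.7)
The plan is to derive Corollary \ref{lin-asy} as a direct specialization of Theorem \ref{linearization} with $\sU$ taken to be the Fr\'echet filter of cofinite subsets of $\N$. The key point is to translate the filter-theoretic quantifiers of the theorem into the game-theoretic language of asymptotic structure.

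First I would observe that for the Fr\'echet filter, a set is $\sU$-positive precisely when it is infinite, so the phrase ``for $\sU$-positive $n_i > l_i$ there exist $l_i < h(n_1,\ldots,n_i) < n_i^+$'' in Theorem~\ref{linearization}(i) in particular guarantees the existence of at least one such $n_i$ (indeed infinitely many). Thus the alternation of quantifiers in Theorem~\ref{linearization}(i), ``for all $l_i > n_{i-1}^+$ for $\sU$-positive $n_i > l_i$ there exist $l_i < h(n_1,\ldots,n_i) < n_i^+$'', collapses exactly to the existential form stated in Corollary \ref{lin-asy}(i): for each $l_i > h(n_1,\ldots,n_{i-1})$ (which forces $l_i > n_{i-1}^+$) there exist $n_i$ and $h(n_1,\ldots,n_i) > l_i$ with the norm estimate from \eqref{permissible-sum}.

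Next I would verify that the resulting block sequence is permissible. By construction the support of $h(n_1,\ldots,n_i)$ sits in the interval $(l_i, n_i^+]$, and the $l_i$'s range over all integers $l_i > n_{i-1}^+$. This is exactly the winning-strategy data for the vector player in the asymptotic game: the subspace player's choice of tail subspace $X_{k_i} = [e_j]_{j \geq k_i}$ corresponds to the choice $l_i = k_i - 1$, and the vector player responds with a block $h(n_1,\ldots,n_i) \in X_{k_i}$. Hence, as noted in the discussion preceding the corollary (``if $x_i$'s can be chosen arbitrarily far out we may assume $(x_i)_{i=1}^n$ is permissible''), the sequence $(h(n_1,\ldots,n_i))_{i=1}^k$ is permissible.

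For (ii), the norm bound \eqref{norm-bound-h_j} carries over verbatim from Theorem~\ref{linearization}(ii) since Fr\'echet-positivity at each step of the inductive choice of interlacing $(m_j,\ldots,m_k)$ poses no additional constraint. For (iii), I would apply Theorem~\ref{linearization}(iii) and note that both $h(n_1,\ldots,n_j)$ and $h^*(n_1,\ldots,n_i)$ are supported (with respect to $(e_j)$ and $(e_j^*)$ respectively) in the disjoint intervals $(l_j, n_j^+]$ and $(l_i, n_i^+]$ whenever $i \neq j$, since the construction forces $n_i^+ < l_{i+1} < n_{i+1}^+ < \cdots$; biorthogonality of $(e_j)$ and $(e_j^*)$ then yields $\langle h^*(n_1,\ldots,n_i), h(n_1,\ldots,n_j)\rangle = 0$ for $i \neq j$. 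There is no real obstacle here; the only mildly subtle point is checking that the iterated filter-positive choices of $n_1,\ldots,n_k$ in Theorem~\ref{linearization} can be selected one at a time (the Fr\'echet filter being fixed throughout) so as to realize the purely existential statement claimed by the corollary, which is immediate from the infiniteness of each $\sU$-positive set.
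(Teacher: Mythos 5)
Your proposal is correct and follows exactly the route the paper intends: the paper derives Corollary \ref{lin-asy} by simply taking $\sU$ to be the Fr\'echet filter in Theorem \ref{linearization} (offering no further proof), and your write-up just fills in the routine details — that Fr\'echet-positive means infinite, that the arbitrary choice of the $l_i$'s is exactly the subspace player's moves in the asymptotic game so the blocks are permissible, and that the disjointness of the support intervals $(l_i, n_i^+]$ gives the biorthogonality claim in (iii).
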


\section{Applications to non-embeddings and rigidity}\label{applications}

We start with applications of Corollary \ref{lin-asy} to deduce the
following known results. 
\begin{itemize}
\item $c_0$, the James space $\J$ and its dual $\J^*$, and, more generally,
non-reflexive Banach spaces with alternating Banach Saks property (for instance,
non-reflexive spaces with type $p>1$) do not coarsely embed into reflexive
spaces (\cite{K}).

\item A separable Banach lattice $X$ is coarsely universal if and only if $c_0$
linearly embeds into $X$ (\cite{K}, see Corollary 5.7 of \cite{BLMS2}). We will
deduce only a special case of this when $X$ has an unconditional basis. 

\item  Let $Y$ be a reflexive asymptotic-$c_0$ Banach space. If $X$ is a Banach
space that coarsely embeds into $Y$, then $X$ is also reflexive and
asymptotic-$c_0$ (\cite{BLMS}).

\end{itemize}

The Kalton's interlacing graphs $(\N^k, d_{\K})$ are modeled on the summing basis
$(s_i)$ of $c_0$. One can generalize them by defining metric spaces  that have a
property which is modeled on arbitrary conditional spreading sequences.  A basis $(x_i)$ is 1-spreading if for all $(a_i)$ and $(n_i)\subseteq \N$ we
have

\begin{equation}\label{def-spreading}
\left\|\sum_{i=1}^{\infty}a_ix_i\right\|=\left\|\sum_{i=1}^{\infty}a_ix_{n_i}\right\|.
\end{equation}

If $(x_i)$ is conditional and spreading, then the summing functional $S(\sum_i
a_ix_i)=\sum_i a_i$ is bounded, and we may assume it has norm 1 (cf.,
\cite{FOSZ}). For basic properties of such sequences that are used below see \cite{FOSZ}, and for a more comprehensive study see \cite{AMS}.

Suppose
$(e_i)$ is a normalized conditional 1-spreading sequence. Then
$(e_{2i}-e_{2i-1})$ is unconditional and not equivalent to the unit vector basis
of $\ell_1$ (otherwise, $(e_i)$ itself would be equivalent to the unit vector
basis of $\ell_1$.) Thus for all $C\ge 1$ there exist positive scalars
$(c_i)_{i=1}^k$ such that $\sum_{i=1}^k c_i\ge C$ while for
$n_1<m_1<n_2<m_2<\ldots<n_k<m_k$ we have  $$\Big\|\sum_{i=1}^k
c_i(e_{n_i}-e_{m_i})\Big\|\le 1.$$   
Since the summing functional $S$ on $[e_i]$ is bounded, we may assume it has norm 1 in particular, and so we have 
$$\Big\|\sum_{i=1}^k c_i e_{n_i}\Big\|\ge  S\Big(\sum_{i=1}^kc_i e_{n_i}\Big)=\sum_{i=1}^kc_i\ge C.$$

\begin{definition}By $\{(\N^k, d^*): k\in\N\}$ denote any family of metric
spaces satisfying the following property.

{\em For all $C\ge 1$ there exists $k\in \N$ such that for all $\vec{n},
 \vec{m}\in \N^{k}$ we have $d^*(\vec{n}, \vec{m})\le 1$ if $\vec{n}, \vec{m}$
 are interlacing, and $d^*(\vec{n}, \vec{m})\ge C$ if
 $\vec{n}<\vec{m}$.}\end{definition}

\begin{remark}
For any increasing function $f:\N\to \R$ with $f(1)=1$ and tending to infinity satisfying $f(n+m)\le f(n)+f(m)$ one can easily define $\{(\N^k, d^*): k\in\N\}$ by taking $\N^k$ as a graph where interlacing tuples are joined, and $d^*$ is a `weighted' shortest distance metric so that the distance is equal to $f(s)$ for nodes $s$ apart. For the function $f(k)=k$, one gets the  Kalton's interlacing graphs. Moreover, it
should be clear from the discussion above that any Banach space with a
conditional spreading model admits an equi-Lipschitz embedding of such a family. This shows that the collection of such families is much broader than simply Kalton's interlacing graphs. For instance, while Kalton's interlacing graphs $(\N^k, d_{\K})$ do not equi-coarsely embed
into the James space (Section \ref{section-J}), James space still admits an equi-Lipschitz embedding of $(\N^k, d^*)$ for some $d^*$ since it has a conditional spreading basis.
\end{remark}

\begin{theorem}\label{ref-asy-unc} Let $X$ be either a reflexive space
 or a space with an asymptotic unconditional boundedly complete basis. Then no family  $\{(\N^k, d^*): k\in\N\}$
 equi-coarsely embeds into $X$.
\end{theorem}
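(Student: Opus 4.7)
The plan is to carry out Kalton's interlacing argument with Corollary~\ref{lin-asy} supplying the linear approximation. By DFJP one may assume $X$ has a boundedly complete basis $(e_i)$. Suppose, for contradiction, that $(\phi_k)_k$ is an equi-coarse embedding of $\{(\N^k,d^*)\}_k$ into $X$, with expansion $\rho$ and compression $\omega$; set $K=\omega(1)$, so that $\|\phi_k(\vec n)-\phi_k(\vec m)\|\le K$ whenever $\vec n,\vec m$ are interlacing. Translating so that $\phi_k(\vec n_0)=0$ for $\vec n_0=(1,\ldots,k)$, any $\vec n\in\N^k$ with $n_1>k$ satisfies $\|\phi_k(\vec n)\|\ge\rho(d^*(\vec n_0,\vec n))$; by the defining property of the family this lower bound can be driven to $\infty$ by choosing $k$ large and $\vec n$ appropriately.

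Apply Corollary~\ref{lin-asy} to $\phi_k$ with a small $\epsilon>0$, and play the iterated game twice in parallel to produce two interlacing $k$-tuples $\vec n,\vec m$ whose linearization blocks interlock. At stage $j$, first select $l_j^{(n)}$ beyond the end of the previously produced block, and find $n_j$ yielding a permissible block $h(n_1,\ldots,n_j)\subseteq(l_j^{(n)},n_j^+]$; then select $l_j^{(m)}>n_j^+$ and find $m_j$ yielding $h(m_1,\ldots,m_j)\subseteq(l_j^{(m)},m_j^+]$. By construction $n_1<m_1<n_2<m_2<\cdots<n_k<m_k$, so $\vec n$ and $\vec m$ interlace. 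The interleaved sequence $(h(n_1),h(m_1),h(n_1,n_2),h(m_1,m_2),\ldots,h(n_1,\ldots,n_k),h(m_1,\ldots,m_k))$ is a permissible block sequence of length $2k$, each block of norm at most $K+\epsilon$ by Theorem~\ref{linearization}(ii), and the approximations $\phi_k(\vec n)\approx h_0+\sum_j h(n_1,\ldots,n_j)$ and $\phi_k(\vec m)\approx h_0+\sum_j h(m_1,\ldots,m_j)$ hold simultaneously to within $\epsilon$.

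In the asymptotic unconditional case, the combined block sequence is $c$-unconditional for some $c=c(X)$. Since $\phi_k(\vec n)-h_0$ and $\phi_k(\vec m)-h_0$ are the odd- and even-indexed partial sums, flipping the signs of the $\vec m$-blocks yields $\|\phi_k(\vec n)+\phi_k(\vec m)-2h_0\|\le c\|\phi_k(\vec n)-\phi_k(\vec m)\|+O(\epsilon)\le cK+O(\epsilon)$; the triangle inequality then bounds $\|\phi_k(\vec n)\|\le\tfrac12((c+1)K+2\|h_0\|)+O(\epsilon)$ independently of $k$, contradicting the lower bound from paragraph one. In the reflexive case the sign-flipping step has no direct analog, but one uses part~(iii) of Theorem~\ref{linearization} (valid with $Z^*=\{0\}$ since a boundedly complete basis of a reflexive space is shrinking) together with the weak compactness of bounded sets to establish a Kalton-style concentration inequality that bounds $\|\phi_k(\vec n)-\phi_k(\vec m)\|$ on a tuple-large set and yields the same contradiction. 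The principal technical obstacle is the interleaved play of the iterated linearization game producing a single permissible block sequence from the two tuples, so that the asymptotic unconditional constant $c$ can be legitimately applied to the combined blocks; the secondary obstacle, confined to the reflexive case, is replacing the unconditional sign-flip by a dual linearization argument or a direct appeal to Kalton's property~$\mathcal Q$.
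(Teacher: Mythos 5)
Your overall strategy is the same as the paper's: linearize $\phi_k$ via Corollary~\ref{lin-asy}, run the iterated game twice in parallel so that the blocks of two interlacing tuples $\vec n,\vec m$ form a single permissible block sequence, and then use asymptotic unconditionality (resp.\ the norming functionals of part (iii)) to contradict the growth of $\rho$. However, there is a genuine gap in how you close the argument: your final inequality $\|\phi_k(\vec n)\|\le\tfrac12\big((c+1)K+2\|h_0\|\big)+O(\ep)$ is claimed to be ``independent of $k$,'' but $\|h_0\|$ is not controlled. The paper explicitly remarks at the end of the proof of Theorem~\ref{linearization} that there is no norm estimate for $h_0$ (unlike the blocks $h(n_1,\dots,n_j)$, which are bounded via (ii) by comparing two tuples differing only from the $j$-th coordinate on): $h_0$ approximates the iterated weak-star limit $\Phi$, whose norm is only bounded by $\sup_{\vec n}\|\phi_k(\vec n)\|$, a quantity that grows with $k$ (e.g.\ $\phi_k$ could carry a translate $v_k$ with $\|v_k\|$ of order $\omega(\mathrm{diam}(\N^k,d^*))$ on the first coordinate while still fixing $\phi_k(\vec n_0)=0$). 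The fix is to never let $h_0$ appear alone: instead of bounding $\|\phi_k(\vec n)\|$ from below via the basepoint $\vec n_0$, take two widely separated tuples $\vec n<\vec m$ from the stabilized tree; then $\|\phi_k(\vec n)-\phi_k(\vec m)\|\ge C$ while the $h_0$'s cancel, and after a Ramsey stabilization of $\eta:=\|\sum_i h(n_1,\dots,n_i)\|$ one gets $C\le 2\eta+4\ep$, i.e.\ a lower bound on $\eta$ rather than on $\|\phi_k(\vec n)\|$. Your sign-flip then bounds $\eta$ from above by $\tfrac12(c+1)K+O(\ep)$ (the paper uses the suppression form $\eta\le\theta\|\sum_i h(n_1,\dots,n_i)-\sum_i h(m_1,\dots,m_i)\|\le\theta\,\omega(1)$, which is the same idea), and the contradiction follows once $C$ is chosen large relative to $\theta\,\omega(1)$.

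The reflexive case is left as a sketch in your write-up. The correct execution, which is what the paper does, is exactly the dual analogue of the sign-flip: with $Z^*=0$, part (iii) of Theorem~\ref{linearization} produces norming functionals $h^*(n_1,\dots,n_i)$ for $\phi_k(\vec n)$ supported in the same gaps as the $h(n_1,\dots,n_i)$ and hence annihilating all the $h(m_1,\dots,m_j)$; then
$\big\|\sum_i h(n_1,\dots,n_i)\big\|\le(1+\ep)\big\langle\sum_i h^*(n_1,\dots,n_i),\sum_i h(n_1,\dots,n_i)-\sum_i h(m_1,\dots,m_i)\big\rangle\le(1+\ep)^2\big(\|\phi_k(\vec n)-\phi_k(\vec m)\|+2\ep\big)$,
which replaces the unconditionality constant and again contradicts the lower bound on $\eta$. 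An appeal to Kalton's property $\mathcal Q$ would also need adaptation here, since the theorem concerns arbitrary families $(\N^k,d^*)$ rather than only the interlacing graphs.
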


\begin{proof} Let $X$ be a space with an asymptotic unconditional boundedly complete basis $(e_i)$ with the asymptotic unconditionality constant $\theta\ge 1$.
Suppose, for  contradiction, that there is a family $\{(\N^k, d^*): k\in\N\}$
and equi-coarse embeddings $$\phi_k:(\N^k, d^*)\to X.$$ Then we have a
non-decreasing function $\rho$ with $\lim_{t\to\infty}\rho(t)=\infty$ and
function $\omega(t)<\infty$ for all $t\in[0,\infty)$ so that for all $\vec{n},
\vec{m}\in \N^k$ and $k\in\N$ we have $$\rho(d^*(\vec{n}, \vec{m}))\le
\|\phi_k(\vec{n})-\phi_k(\vec{m})\|\le \omega( d^*(\vec{n}, \vec{m})).$$

Let $C\ge 8\theta\omega(1)$ and $k\in\N$ be such that $\rho(d^*(\vec{n},
\vec{m}))\ge C$ for all $\vec{n}<\vec{m}$ , and $\rho(d^*(\vec{n},
\vec{m}))\le \omega(1)$ for all interlacing tuples $\vec{n},\vec{m}$ in $\N^k$.

From now on we only work with $\phi_k$ so we drop the subscript $k$ and write
$\phi:=\phi_k$.  
Let $\ep>0$.  By Corollary
\ref{lin-asy} there exists a full subtree $\cT$ (that is, every node has infinitely many immediate successors) of $\N^k$ such that for all $(n_1,
n_2,\ldots, n_k)\in \cT$ we have 

\begin{align*}
&\Big\|\phi(\vec{n})-h_0-\sum_{i=1}^{k}h(n_1, \ldots, n_i)\Big\|< \ep
\end{align*}
for some permissible block vectors $h_0<h(n_1)<\ldots<h(n_1,\ldots,n_k)$ with
respect to $(e_i)$.
 
By Ramsey\footnote{The version of Ramsey we use here is that any partition of a full tree of height $k$ into $l$ many colors has a homogeneous full subtree.} there exists a full subtree $\cT'\subseteq \cT$
so that that for all $(n_1, \ldots, n_k)\in \cT'$ we have 
$$\left|\Big\|\sum_{i=1}^k h(n_1, \ldots, n_i)\Big\|-\eta\right|<\ep$$ for some $\eta.$ We estimate $\eta$. Let $\vec{n}<\vec{m}$ be successive $k$-tuples in $\cT'$. We have
\begin{align*}
8\theta\omega(1)<C&\le \|\phi(\vec{n})-\phi(\vec{m})\|\\
&\le \Big\|\sum_{i=1}^k h(m_1, \ldots, m_i)-\sum_{i=1}^k h(n_1, \ldots, n_i)\Big\|+2\ep\\
&\le \Big\|\sum_{i=1}^k h(m_1, \ldots, m_i)\Big\|+\Big\|\sum_{i=1}^k h(n_1, \ldots, n_i)\Big\|+2\ep\\
&\le 2\eta+4\ep.\\
\end{align*}
That is, $\eta\ge 4\theta\omega(1)-2\ep$.

On the other hand, applying Corollary \ref{lin-asy} to a pair of strictly
interlacing tuples $ n_1 <  m_1 < n_2< m_2<\ldots< n_k < m_k$ in $\cT'$ we have similar estimates for $\phi(\vec n)$ and $\phi(\vec m)$ in terms of
permissible block vectors of the form
$h_0<h(n_1)<h(m_1)<h(n_1,n_2)<h(m_1,m_2)<\ldots<h(n_1, \ldots, n_k)<h(m_1, \ldots, m_k)$. Then 
\begin{align*}
4\theta\omega(1)-2\ep&\le \Big\|\sum_{i=1}^{k}h(n_1, \ldots, n_i)\Big\|
\le \theta \Big\|\sum_{i=1}^{k}h(n_1, \ldots, n_i)-\sum_{i=1}^{k}h(m_1, \ldots, m_i)\Big\|\\&\le \theta \|\phi(\vec{n})-\phi(\vec{m})\|\le \theta\omega(1),
\end{align*}
which yields a contradiction, and completes the proof.

In the case of
reflexive $X$, we may assume $X$ is separable and embeds into a reflexive space
with a boundedly complete basis $(e_i)$. Then Corollary \ref{lin-asy} yields a
permissible $k$-tuple of functionals $(h^*(n_1, \ldots, n_i))_{i=1}^{k}$ vanishing on $h(m_1), \ldots, h(m_1, \ldots, m_k)$ such that
\begin{align*}
&\Big\|\sum_{i=1}^{k} h(n_1, \ldots, n_i)\Big\|\le (1+\ep)\Big<\sum_{i=1}^{k} h^*(n_1, \ldots, n_i),\sum_{i=1}^{k} h(n_1, \ldots, n_i)\Big>,\ \text{and}\\
&\Big\|\sum_{i=1}^{k} h^*(n_1, \ldots, n_i)\Big\|\le 1+\ep.
\end{align*}

Repeat the first part of the proof above (the stabilization argument) and assume $k$ is large enough so that the first inequality below holds. Then putting these together
\begin{align*}
4\omega(1)-2\ep&\le \Big\|\sum_{i=1}^{k}h(n_1, \ldots, n_i)\Big\|
\le (1+\ep)\left<\sum_{i=1}^{k}h^*(n_1, \ldots, n_i), \sum_{i=1}^{k}h(n_1, \ldots, n_i)\right>\\
&=(1+\ep)\left<\sum_{i=1}^{k}h^*(n_1, \ldots, n_i), \sum_{i=1}^{k}h(n_1, \ldots, n_i)-\sum_{i=1}^{k}h(m_1, \ldots, m_i)\right>\\
& \le (1+\ep)^2\Big\| \sum_{i=1}^{k}h(n_1, \ldots, n_i)-\sum_{i=1}^{k}h(m_1, \ldots, m_i)\Big\|\\
&\le(1+\ep)^2\|\phi(\vec{n})-\phi(\vec{m})\|+(1+\ep)^22\ep\\
&\le(1+\ep)^2\omega(1)+3\ep,
\end{align*}
  yields a contradiction for small enough $\ep$.
 
\end{proof}

By Corollary 10.5 of \cite{AMS} a non-reflexive space $X$ with no $\ell_1$
spreading models admits a conditional spreading model. Thus we have

\begin{corollary}\label{non-ref-no-l_1}
Let $X$ be a non-reflexive Banach space which admits no $\ell_1$ spreading
models. Let $Y$ be either reflexive or has asymptotic unconditional boundedly
complete basis. Then $X$ does not coarsely embed into $Y$.
\end{corollary}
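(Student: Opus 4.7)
The plan is to combine Corollary~10.5 of \cite{AMS}---which states that a non-reflexive Banach space with no $\ell_1$ spreading models admits a conditional spreading model---with Theorem~\ref{ref-asy-unc} and the construction in the Remark. Assume, for contradiction, that $\Phi: X \to Y$ is a coarse embedding with moduli $(\rho_\Phi, \omega_\Phi)$. First, by Corollary~10.5 of \cite{AMS}, pick a normalized sequence $(x_n) \subset X$ whose spreading model $(e_i)$ is conditional and $1$-spreading, with bounded summing functional $S$ on $[e_i]$.

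As in the paragraph preceding the Definition, for each integer $k \geq 1$ the conditional nature of $(e_i)$---specifically, the fact that $(e_{2i}-e_{2i-1})$ is unconditional and not equivalent to the $\ell_1$ unit vector basis, together with $\|S\|=1$---produces positive coefficients $(c_i^{(k)})_{i=1}^k$ satisfying $\sum c_i^{(k)} \geq k$ while $\|\sum c_i^{(k)}(e_{n_i}-e_{m_i})\|_{[e_i]} \leq 1$ for all interlacing tuples. Passing to a sufficiently deep tail of $(x_n)$, the same estimates transfer from $[e_i]$ back to $X$ up to an arbitrarily small error; thus the maps $\psi_k(\vec n):=\sum c_i^{(k)} x_{N_k+n_i}$ realize an equi-Lipschitz embedding of a suitable family $(\N^k, d^*)$ satisfying the Definition into $X$, exactly as asserted in the Remark. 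Here $d^*$ is most conveniently taken as a weighted shortest-path metric on the interlacing graph of $\N^k$, with weight function $f$ chosen to match the lower norm growth exhibited by the $(\psi_k)$.

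Composing with $\Phi$, the maps $\phi_k:=\Phi \circ \psi_k: (\N^k,d^*) \to Y$ then form an equi-coarse embedding of this family into $Y$: the upper bound comes from $\omega_\Phi$ applied to the Lipschitz bound of $\psi_k$, while the lower bound comes from $\rho_\Phi$ applied to the lower Lipschitz bound. This contradicts Theorem~\ref{ref-asy-unc} applied to $Y$, completing the argument. The main obstacle is justifying that the Remark's equi-Lipschitz embedding admits a uniform lower bound compatible with the Definition; this reduces to tracking the norms $\|\psi_k(\vec n)-\psi_k(\vec m)\|_X$ for non-interlacing $\vec n,\vec m$ and verifying that they grow in a controlled way with $d^*(\vec n,\vec m)$, using the summing functional $S$ and the conditional spreading structure of $(e_i)$ (e.g.\ $\|\psi_k(\vec n)\|_X \geq \sum c_i^{(k)}/\|S\|$ forces the image of $\psi_k$ to spread out in $X$).
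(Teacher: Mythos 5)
Your overall route is exactly the paper's: invoke Corollary~10.5 of \cite{AMS} to produce a conditional spreading model, convert it into an equi-coarse embedding of a family $\{(\N^k,d^*)\}$ satisfying the Definition, and contradict Theorem~\ref{ref-asy-unc}. The gap is precisely the step you flag at the end, and as you have set things up it is genuine: no weighted shortest-path metric $d^*=f\circ d_{\mathrm{graph}}$ with $f(1)=1$ can carry the required \emph{lower} modulus. Concretely, take $X=\J$ with its conditional spreading basis: there the interlacing constraint forces $c_i^{(k)}\approx k^{-1/2}$ (since $\|\sum_i c_i(e_{n_i}-e_{m_i})\|\approx(\sum_i c_i^2)^{1/2}$ while $\sum_i c_i\ge C$), and the adjacent pair $\vec n=(1,\dots,k)$, $\vec m=(2,\dots,k+1)$ has $\|\psi_k(\vec n)-\psi_k(\vec m)\|\approx\sqrt{2/k}\to 0$ even though $d^*(\vec n,\vec m)=f(1)=1$; more generally pairs at fixed graph distance $j$ can have norm difference $O(j/\sqrt{k})$. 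So for any admissible $f$ the lower modulus $\rho$ of $\Phi\circ\psi_k$ would have to vanish identically, and you do not get an equi-coarse embedding of that family. A second, related problem: the estimate you cite, $\|\psi_k(\vec n)\|\ge\sum_i c_i^{(k)}/\|S\|$, controls a single point, whereas the Definition and the proof of Theorem~\ref{ref-asy-unc} need $d^*(\vec n,\vec m)\ge C$ for successive tuples $\vec n<\vec m$; applying $S$ to the \emph{difference} $\psi_k(\vec n)-\psi_k(\vec m)$ gives $0$, so the summing functional alone does not yield this.

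Both problems are repaired by choosing $d^*_k$ to be the pullback metric $d^*_k(\vec n,\vec m):=\|\sum_i c_i^{(k)}(e_{n_i}-e_{m_i})\|$, which the Definition permits (it only asks for \emph{some} family of metric spaces with the two stated properties). After passing to a tail of $(x_n)$ so the spreading-model estimates hold up to $1+\ep$, the maps $\psi_k$ become $(1+\ep)$-isometries of $(\N^k,d^*_k)$ into $X$, so $\Phi\circ\psi_k$ is automatically equi-coarse with moduli obtained from $\rho_\Phi,\omega_\Phi$, and all that remains is to verify the two clauses of the Definition. The interlacing clause is the defining property of the $c_i^{(k)}$. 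For the clause $\vec n<\vec m\Rightarrow d^*\ge C$, use the tail summing functionals $g_p=S\circ(I-P_{p-1})$, which are uniformly bounded by $\|S\|(1+b)$ with $b$ the basis constant of $(e_i)$; taking $p=m_1$ gives $g_p\bigl(\sum_i c_i(e_{n_i}-e_{m_i})\bigr)=-\sum_i c_i$, so it suffices to have chosen $\sum_i c_i^{(k)}\ge\|S\|(1+b)C$. With these two adjustments your argument closes and coincides with what the paper leaves implicit in the Remark.
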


\begin{remark} Thus if a non-reflexive $X$ coarsely embeds into $\ell_2$, then $X$ must  have $\ell_1$ spreading models. The reader should also recall the well known fact that $\ell_1$ coarsely embeds into $\ell_2$. 

\end{remark}

Recall the classical fact that $c_0$ linearly embeds into a space with an
unconditional basis if and only if the basis is not boundedly complete. Thus we have
 
\begin{corollary}
Let $X$ be a Banach space with an unconditional basis. Then $c_0$ coarsely
embeds into $X$ if and only if $c_0$ linearly embeds into $X$.
\end{corollary}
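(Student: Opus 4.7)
The plan is to reduce the nontrivial direction to Theorem \ref{ref-asy-unc} via the classical structural dichotomy for unconditional bases. The reverse implication (``linear $\Rightarrow$ coarse'') is immediate since any linear embedding is Lipschitz and hence coarse, so the whole content lies in the forward direction.

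For the forward direction, I would argue by contrapositive. Suppose $c_0$ does not linearly embed into $X$. By the classical fact quoted just before the corollary, the unconditional basis of $X$ must then be boundedly complete. In particular $X$ has an asymptotic unconditional (trivially so, since the basis is unconditional) boundedly complete basis, which is precisely the second case of the hypothesis of Theorem \ref{ref-asy-unc}. That theorem therefore gives that no family $\{(\N^k, d^*) : k \in \N\}$ of the type defined in Section \ref{applications} equi-coarsely embeds into $X$.

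Next I would produce such a family sitting inside $c_0$ in order to transfer the non-embedding to a statement about coarse embeddings of $c_0$. The summing basis $(s_i)$ of $c_0$ is the prototype of a conditional $1$-spreading basis, so by the remark following the definition of $(\N^k, d^*)$, Kalton's interlacing graphs $(\N^k, d_{\K})_k$ admit equi-Lipschitz embeddings $\psi_k : (\N^k, d_{\K}) \to c_0$ given by $\psi_k(\vec n) = \sum_{i=1}^k s_{n_i}$ (indeed $\|\psi_k(\vec n) - \psi_k(\vec m)\| = 1$ when $\vec n, \vec m$ are interlacing, and upper bounds for longer graph distances follow by the triangle inequality). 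If there were a coarse embedding $\phi : c_0 \to X$, then the compositions $\phi \circ \psi_k$ would form an equi-coarse embedding of the family $(\N^k, d_{\K})_k$ into $X$, contradicting Theorem \ref{ref-asy-unc}. Hence no such $\phi$ exists, completing the proof.

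I do not expect a real obstacle: the analytic work is already contained in Theorem \ref{ref-asy-unc}, and the argument here is essentially bookkeeping—invoking the classical unconditional-basis dichotomy on one side and the standard ``summing basis realizes the Kalton graphs'' construction on the other.
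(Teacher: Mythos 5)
Your proposal is correct and is exactly the derivation the paper intends: the classical James dichotomy gives bounded completeness of the (trivially asymptotically) unconditional basis, Theorem \ref{ref-asy-unc} rules out equi-coarse embeddings of the families $\{(\N^k,d^*)\}$, and the summing-basis realization of the Kalton graphs inside $c_0$ transfers this to the non-existence of a coarse embedding of $c_0$. No gaps.
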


Another consequence of Corollary \ref{lin-asy} is the coarse rigidity
of reflexive asymptotic-$c_0$ spaces \cite{BLMS}. As in the original proof, we make use of a characterization of asymptotic-$c_0$ spaces via {\em asymptotic models} given in \cite{FOSZ}.

Let $(x^j_i)_{i\ge 1}, 1\le j\le k$ be an array of $C$-basic  sequences in a space $X$. The arrays could be infinite but we only consider finite ones. We say that $(e_j)_{j=1}^k$ is an asymptotic model generated by the $C$-basic array $(x^j_i)$ if for all scalars $(a_j)_{j=1}^k$ we have 
$$\lim_{i_1\to\infty}\ldots \lim_{i_k\to\infty}\Big\|\sum_{j=1}^k a_j x^j_{i_k}\Big\|=\Big\|\sum_{j=1}^k a_j e_j\Big\|.$$ 
That is, the norm of the linear combinations of diagonal elements in the array stabilize. As an application of Ramsey's theorem, every $C$-basic array has a subarray that generates an asymptotic model. For details, we refer to Halbeisen and Odell \cite{HO} where this notion was introduced and studied.

Asymptotic spaces are generated by countably branching trees while asymptotic models are generated by arrays. Therefore, the following result of \cite{FOSZ} came as a surprise.

\begin{theorem}[FOSZ]
Suppose that a Banach space $X$ does not contain an isomorphic
copy of $\ell_1$ and every asymptotic model $(e_i)$ generated by weakly null arrays in $X$ is
equivalent to the unit vector basis of $c_0$. Then

(i) $X^*$ is separable, and thus $X$ embeds into a space $Y$ with a shrinking basis $(yi)$.

(ii) $X$ is asymptotic-$c_0$ (with respect to the basis $(y_i)$).
\end{theorem}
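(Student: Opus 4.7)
The plan splits naturally into proving (i) (separability of $X^*$ and embedding into a shrinking-basis space) and then (ii) (the asymptotic-$c_0$ structure).

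The first move is a key lemma: every seminormalized weakly null sequence in $X$ admits a subsequence that is (say) $2$-equivalent to the unit vector basis of $c_0$. To prove it, given a weakly null $(x_n)$ I would form the array whose $j$th row is a subsequence of $(x_n)$ supported on indices $> j$, and apply the hypothesis on asymptotic models of weakly null arrays. A diagonal/Ramsey extraction then converts the pointwise stabilization of norms (the definition of asymptotic model) into a global $c_0$-equivalence of some subsequence, because $c_0$ is $1$-subsymmetric.

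Next, I would use this lemma to deduce that $X^*$ is separable. Assume toward contradiction that $X^*$ is non-separable. Since $X$ has no copy of $\ell_1$, Rosenthal's $\ell_1$-theorem says every bounded sequence has a weakly Cauchy subsequence; combined with non-separability of $X^*$ this should produce a bounded weakly Cauchy sequence $(x_n)$ that is not weakly convergent (else $B_{X^{**}}$ would be weak$^*$-sequentially equal to $B_X$ and $X^*$ would be separable by an Odell--Rosenthal / Hagler--Johnson style argument). Then the differences $y_n = x_{2n+1}-x_{2n}$ are seminormalized and weakly null, so by the key lemma a subsequence is equivalent to the $c_0$-basis. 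Now $(x_n)$ along the corresponding indices behaves like the summing basis of $c_0$; bootstrapping (tree version, applied transfinitely or via the Szlenk derivation) then shows $Sz(X)=\omega$, whence by Bourgain's theorem $X^*$ is separable, contradicting the assumption. With $X^*$ separable, Zippin's theorem embeds $X$ into a space $Y$ with a shrinking basis $(y_i)$.

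For (ii), I want to show $(y_i)$ witnesses that $X$ is asymptotic-$c_0$. Fix $\varepsilon>0$ and $k$ and play the asymptotic game: if the vector player has no winning strategy producing a $(1+\varepsilon)$-$c_0$ sequence of length $k$, then by König's lemma we obtain a countably branching block tree $\mathcal{T}_k$ of height $k$ in $X$ each of whose branches is normalized but fails to be $(1+\varepsilon)$-equivalent to the unit $c_0$-basis. Since $(y_i)$ is shrinking, block sequences are weakly null, so rows extracted from $\mathcal{T}_k$ form a weakly null array. The hypothesis on asymptotic models forces the asymptotic model of a suitable subarray to be the unit $c_0$-basis, hence a diagonal branch is $(1+\varepsilon)$-equivalent to it, contradicting the construction of $\mathcal{T}_k$.

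The main obstacle I foresee is Step 2, the passage from the key lemma to separability of $X^*$. The implication ``every weakly null sequence has a $c_0$ subsequence'' $\Rightarrow$ ``$X^*$ separable'' is not formal: it needs the $\ell_1$-free hypothesis to turn bounded sequences into weakly Cauchy ones and a tree/Szlenk argument (or equivalently, a careful Bessaga--Pelczynski extraction controlling the summing functional) to rule out non-separable duals. Getting the right ``no $c_0$-tree-lifts-to-$\ell_1$-in-dual'' combinatorics is where the argument is delicate; once $X^*$ is separable and a shrinking basis is in hand, the asymptotic-$c_0$ conclusion (ii) is a direct application of the asymptotic-model hypothesis combined with the asymptotic game.
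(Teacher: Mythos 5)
First, a point of order: the paper does not prove this statement at all --- it is quoted verbatim from the reference [FOSZ] and used as a black box in the proof of Theorem \ref{asy-c_0}. So there is no in-paper proof to compare against; I can only assess your sketch on its own terms, and it has a fatal flaw.

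Your ``key lemma'' in Step 1 --- that under the hypotheses every seminormalized weakly null sequence has a subsequence $2$-equivalent to the unit vector basis of $c_0$ --- is false. The dual Tsirelson space $T^*$ satisfies both hypotheses of the theorem (it is reflexive, hence contains no $\ell_1$, and every asymptotic model generated by a weakly null array in $T^*$ is equivalent to the $c_0$ basis; this is the standard motivating example), yet $T^*$ is reflexive and therefore contains no isomorphic copy of $c_0$ whatsoever. The error is the passage from the asymptotic-model (or spreading-model) hypothesis to a \emph{global} equivalence along a subsequence: the iterated-limit definition only stabilizes the norms of finite linear combinations taken sufficiently far out, with the admissible starting index depending on the length $k$; ``Ramsey plus $1$-subsymmetry of $c_0$'' does not upgrade this to a single subsequence spanning $c_0$ (exactly as $c_0$ spreading models in $T^*$ do not yield $c_0$ subsequences). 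Consequently your entire route to (i) collapses, since it feeds this lemma into the weakly Cauchy sequence produced by Rosenthal's theorem. The actual argument for (i) must go through the dual: one uses Stegall's characterization of separable spaces with nonseparable dual via a bounded dyadic tree in $X$, takes weakly null differences along branches (here is where $\ell_1$-freeness enters, via Rosenthal), and shows the resulting weakly null array generates an asymptotic model dominated below by a summing functional, hence conditional, hence not equivalent to the (unconditional) $c_0$ basis --- contradiction.

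There is a second, independent gap in your Step for (ii). The passage from the ``bad'' block tree $\cT_k$ to a weakly null \emph{array} is precisely the nontrivial content of the FOSZ theorem: in a tree the $j$th vector of a branch depends on the entire node $(i_1,\dots,i_j)$, whereas in an array the $j$th row vector $x^j_{i_j}$ depends only on the single index $i_j$. The sentence ``rows extracted from $\cT_k$ form a weakly null array'' assumes away this difficulty; there is no canonical such extraction, and the paper you are working in explicitly flags this point (``Asymptotic spaces are generated by countably branching trees while asymptotic models are generated by arrays. Therefore, the following result of [FOSZ] came as a surprise.''). Your outline for (ii) is the right target but omits the combinatorial construction that makes the tree-to-array reduction work.
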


As in \cite{BLMS}, we use this to prove:

\begin{theorem}\label{asy-c_0} Let $Y$ be a separable reflexive asymptotic-$c_0$ Banach space. If
$X$ is a Banach space that coarsely embeds into $Y$, then $X$ is also reflexive
and asymptotic-$c_0$.
\end{theorem}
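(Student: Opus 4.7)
The plan is to verify the hypotheses of the FOSZ theorem stated just above for $X$, and then deduce reflexivity as a separate consequence. Concretely, I will establish:
\begin{itemize}
\item[(a)] $X$ contains no isomorphic copy of $\ell_1$;
\item[(b)] every asymptotic model $(e_j)_{j=1}^k$ generated by a weakly null basic array in $X$ is equivalent to the unit vector basis of $c_0$ with a constant independent of $k$.
\end{itemize}
Granting (a) and (b), the FOSZ theorem gives that $X^*$ is separable and $X$ is asymptotic-$c_0$ with respect to some shrinking basis. Reflexivity will then follow from Corollary \ref{non-ref-no-l_1}: if $X$ were non-reflexive, then by Corollary \ref{non-ref-no-l_1} it would admit an $\ell_1$ spreading model, and passing to a weakly null difference sequence would yield a weakly null asymptotic model equivalent to $\ell_1$, contradicting~(b).

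The heart of the proof is (b), which relies on Corollary \ref{lin-asy}. Let $\phi\colon X\to Y$ be the coarse embedding, Lipschitz at large distances with constant $K$, and let $D$ be an asymptotic-$c_0$ constant for $Y$: every permissible block sequence $(h_l)_{l=1}^k$ in the basis of $Y$ satisfies $\|\sum h_l\|_Y\le D\max_l\|h_l\|_Y$. Fix a seminormalized weakly null array $(x_i^j)$ in $X$ generating $(e_j)_{j=1}^k$ and scalars $(a_j)$. Applying Corollary \ref{lin-asy} to the bounded map $(i_1,\dots,i_k)\mapsto \phi(\sum_j a_j x_{i_j}^j)$ produces permissible blocks $h_0,h_1,\dots,h_k$ in $Y$ with
\[
\Big\|\phi\Big(\sum_j a_j x_{i_j}^j\Big)-h_0-\sum_{l=1}^k h_l\Big\|_Y<\varepsilon,
\]
and, by part (ii), $\|h_l\|_Y\le K\big\|\sum_{j\ge l} a_j(x_{i_j}^j-x_{m_j}^j)\big\|_X+\varepsilon$ for appropriate interleaving tuples with $i_l<m_l<i_{l+1}<m_{l+1}<\dots<i_k<m_k$. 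By a diagonal refinement of the arrays, arrange that the interleaved tuples also converge to an asymptotic model $(\tilde e_j)_{j=1}^{2k}$ of the doubled array, so these pair differences can be controlled in terms of the $1$-spreading structure of $(\tilde e_j)$.

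To remove the constant $h_0$ term, which is problematic since $\|h_0\|_Y$ can itself scale with $\|\sum a_j e_j\|$, I will compare $\phi(\sum_j a_j x_{i_j}^j)$ with $\phi(\sum_j a_j x_{i'_j}^j)$ for two interleaved choices $\vec i,\vec i'$ whose mutual $X$-distance is controlled by the interleaving structure of $(\tilde e_j)$. This cancels the $h_0$-contribution, and a Kalton-style interlacing comparison combined with the asymptotic-$c_0$ estimate in $Y$ yields
\[
\rho\bigl(\|{\textstyle\sum}_j a_j e_j\|_X\bigr)\le \Big\|\phi\Big(\sum_j a_j x_{i_j}^j\Big)-\phi\Big(\sum_j a_j x_{i'_j}^j\Big)\Big\|_Y\le C\,D K\max_j|a_j|+O(\varepsilon),
\]
with $C$ independent of $k$, which forces $(e_j)$ to be $c_0$-equivalent uniformly in $k$. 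Item (a) is handled by a parallel argument: if $\ell_1\hookrightarrow X$, pass to a weakly null $\ell_1$-spreading basic sequence $(x_n)$; the same linearization and asymptotic-$c_0$ bound force $\|\phi(\sum_{i=1}^k x_{n_i})\|_Y$ to remain bounded, contradicting $\|\sum_{i=1}^k x_{n_i}\|_X\asymp k$ and $\rho\to\infty$.

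The main obstacle is the step in the previous paragraph: extracting the doubled asymptotic model $(\tilde e_j)_{j=1}^{2k}$ and arranging the interlacing comparison so that the final bound depends only on $\|\sum_l a_l(\tilde e_{2l-1}-\tilde e_{2l})\|_X$, which is uniformly controlled by $\max_j|a_j|$ via the standard trichotomy for $1$-spreading sequences (between $c_0$-like, $\ell_1$-like, and genuinely conditional behavior). This combines the $1$-spreading property of asymptotic models with the Kalton interlacing scheme, exactly as in \cite{BLMS}, and constitutes the technical core of the proof.
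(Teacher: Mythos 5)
Your overall architecture --- verify the FOSZ hypotheses for $X$ via Corollary \ref{lin-asy} plus an interlacing comparison --- is the same as the paper's, but the step you yourself flag as the technical core rests on a claim that is false. You need each block to satisfy $\|h_l\|_Y\lesssim K\max_j|a_j|$, and you propose to get this from part (ii) of Corollary \ref{lin-asy} by bounding $\bigl\|\sum_{j\ge l}a_j(x^j_{i_j}-x^j_{m_j})\bigr\|_X$ in terms of $\bigl\|\sum_j a_j(\tilde e_{2j-1}-\tilde e_{2j})\bigr\|_X$ and then asserting that the latter is ``uniformly controlled by $\max_j|a_j|$ via the standard trichotomy for $1$-spreading sequences.'' The difference sequence of a weakly null asymptotic model is suppression unconditional and not equivalent to the $\ell_1$ basis, but that does not make it $c_0$-like: if the asymptotic model is the $\ell_2$ basis (take $X=\ell_2$ and the constant array given by the unit vector basis), then $\bigl\|\sum_j a_j(\tilde e_{2j-1}-\tilde e_{2j})\bigr\|=\sqrt2\,\|(a_j)\|_2$, which is not $O(\max_j|a_j|)$. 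Controlling interlaced tail differences by $\max_j|a_j|$ is essentially the $c_0$-equivalence you are trying to establish, so the argument is circular at precisely this point.

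The paper sidesteps both difficulties. First, it never proves $c_0$-equivalence for arbitrary coefficients: since weakly null asymptotic models are $1$-suppression unconditional, failure of asymptotic-$c_0$ yields (via FOSZ) models with $\|\sum_{i=1}^k e_i\|\to\infty$, so only the coefficients $a_j\equiv 1$ must be handled. Second, the block bound is obtained not from tail-interlaced pairs but from a comparison tuple $\vec m$ differing from $\vec n$ in a \emph{single} coordinate $i$, for which $d(\vec n,\vec m)=\|x^i_{n_i}-x^i_{m_i}\|\le 2$ trivially, giving $\|h(n_1,\dots,n_i)\|\le 2K$ regardless of what the asymptotic model is; the lower bound then comes from fully separated tuples $\vec n<\vec m$ together with weak nullness (which gives $\|\sum_i x^i_{n_i}\|\le(1+\ep)\|\sum_i x^i_{n_i}-\sum_i x^i_{m_i}\|$), not from an interlacing comparison. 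Separately, your item (a) as written cannot be carried out: inside an isomorphic copy of $\ell_1$ there is no seminormalized weakly null sequence (Schur property), so one cannot ``pass to a weakly null $\ell_1$-spreading basic sequence'' from the assumption $\ell_1\hookrightarrow X$. In the paper, $\ell_1\not\hookrightarrow X$ is simply a consequence of reflexivity, which is proved first and directly from James's $\ell_1^+$ characterization using the same single-coordinate block bound.
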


\begin{proof}
Suppose $\phi:X\to Y$ is a coarse embedding such that for some constant $K$ and
a map $\rho$ with $\lim_{t\to\infty}\rho(t)=\infty$ we have

$$\rho(\|x-y\|)\le\|\phi(x)-\phi(y)\|\le K \|x-y\|,$$ 
for $\|x-y\|\ge 1$. Suppose that $Y$ is reflexive and asymptotic-$c_0$ with
constant $C$.

 If $X$ were non-reflexive, by James' characterization of reflexivity it contains
 a $\ell^+_1$ sequence. That is, there exists an infinite sequence $(x_i)$ in
 the unit ball such that for all $k$ and all $n_1<\ldots<n_k<m_1<\ldots<m_k$ in
 $\N^{2k}$ we have
$$\Big\|\sum_{i=1}^k x_{n_i}-\sum_{i=1}^k x_{m_i}\Big\|\ge \frac{k}{2}.$$  

Fix $k$ such that $\rho(k/2)\ge 8KC$. Thus for  $n_1<\ldots<n_k<m_1<\ldots<m_k$  we have
$$ \left\|\phi\Big(\sum_{i=1}^k x_{n_i}\Big)-\phi\Big(\sum_{i=1}^k
x_{m_i}\Big)\right\|\ge 8KC.$$

$Y$ embeds into a reflexive space $Y'$ with a basis $(e_i)$. We take this basis $(e_i)$ in Corollary \ref{lin-asy} apply to the map 
$\phi(\vec{n}):=\phi\Big(\sum_{i=1}^k x_{n_i}\Big)$ where the tuples are from
the sequence $(x_i)$ we get a subsequence $M\subset \N$ so that for all $(n_1, \ldots, n_k)\in M^k$ we have (suppressing tiny approximations)
$$\phi\Big(\sum_{i=1}^k x_{n_i}\Big)=h_0 +\sum_{i=1}^k h(n_1, \ldots, n_i)$$ for some block vectors $h_0<h(n_1)<\ldots<h(n_1,\ldots, n_k)$ (where the block structure is with respect to the basis $(e_i)$).

We claim that for all $n_1< \ldots< n_k\in M$ with at least one $m_i\in M$ in between $n_i<m_i<n_{i+1}$, we have
 $$\|h(n_1, \ldots, n_i)\|\le 2K$$ for all $1\le i\le
k$.

Indeed, fix $1\le i\le k$. For a given such tuple $n_1<\ldots< n_k\in M$ let $m_j=n_j$ for all $i\neq j$, and $m_i\in M$ with $n_i<m_i<n_{i+1}$. (For $i=k$, pick $n_{k-1}<m_{k-1}<n_k$.) Then
\begin{align*}&\|h(n_1, \ldots, n_i)\|\le \|h(n_1, \ldots, n_i)-h(m_1, \ldots, m_i)\|\\ &\le \Big\|\sum_{j\ge i}h(n_1, \ldots, n_j)-h(m_1, \ldots, m_j)\Big\|=\Big\|\phi\Big(\sum_{i=1}^k x_{n_i}\Big)-\phi \Big(\sum_{i=1}^k x_{m_i}\Big)\Big\|\\
&\le K\|x_{n_i}-x_{m_i}\|\le 2K.
\end{align*}
The first and the second inequalities above follow from the fact that the blocks involved are permissible (thus $(1+\ep)$-basic, which we suppressed the error for brevity), and the first $i-1$ blocks of both tuples are identical and hence cancel out. 

Now let $n_1<\ldots<n_k<m_1<\ldots<m_k$ be two tuples in $M$ with at least one gap in $M$ between coordinates, By the above claim, the corresponding blocks  $h(n_1)<\ldots<h(n_1,\ldots, n_k)<h(m_1)<\ldots<h(m_1,
\ldots, m_k)$ are permissible in $Y$ and 
$$\|h(n_1, \ldots, n_i)\|, \|h(m_1, \ldots, m_i)\|\le 2K$$ for all $1\le i\le
k$. Therefore, by our initial assumption and the main assumption that $Y$ is asymptotic-$c_0$, we have
\begin{align*}
8KC \le& \left\|\phi\Big(\sum_{i=1}^k x_{n_i}\Big)-\phi\Big(\sum_{i=1}^k x_{m_i}\Big)\right\|\\
&\le \left\|\sum_{i=1}^{k}h(n_1, \ldots, n_i)-\sum_{i=1}^{k}h(m_1, \ldots, m_i)  \right\|+2\ep,\\
&\le2KC+2\ep,
\end{align*}
which is a contradiction. Thus $X$ must be reflexive. 

Now we show that $X$ is asymptotic-$c_0$.
Note that, since $X$ is reflexive, the asymptotic structure is independent of the filter
used. Since $X$ embeds into a reflexive space with a basis we will take the asymptotic structure with respect to the basis of the super space. For the sake of contradiction suppose that $X$ is not asymptotic-$c_0$. By the theorem of \cite{FOSZ} mentioned above, 
there exist asymptotic models $(e_i)_{i=1}^k$ of $X$ generated by normalized weakly null arrays (which are 1-basic) with $\|\sum_{i=1}^ke_i\|\nearrow \infty$ as
$k\nearrow \infty$.\footnote{This follows from the fact that asymptotic models generated by weakly null arrays are 1-suppression unconditional, see \cite{HO}.} Let $k$ be
sufficiently large so that $\rho(k)>4KC$ and $\ep>0$, then there exists a normalized 1-basic weakly null array $(x^j_i)_{i\ge 1, 1\le j\le k}$ in
$X$ such that for all $(x^1_{n_1}, \ldots, x^k_{n_k})$ with $n_1<\ldots< n_k$ we have $(x^i_{n_i})_{i=1}^k\stackrel{1+\ep}\sim (e_i)_{i=1}^k$ and 
$$\Big\|\phi\Big(\sum_{i=1}^k x^i_{n_i}\Big)\Big\|\ge 4KC.$$

Again, applying Corollary \ref{lin-asy} to
$\phi(\vec{n}):=\phi\Big(\sum_{i=1}^k x^i_{n_i}\Big)$ where the tuples are from
the array $(x^j_i)_{i\ge 1, 1\le j\le k}$ we get a subarray $(x^j_i)_{i\in M, 1\le j\le k}$  so that for all $(n_1, \ldots, n_k)$ we have (suppressing tiny approximations)
$$\phi\Big(\sum_{i=1}^k x^i_{n_i}\Big)=h_0 +\sum_{i=1}^k h(n_1, \ldots, n_i)$$ for some block vectors $h_0<h(n_1)<\ldots<h(n_1,\ldots, n_k)$ (where the block structure is with respect to the basis $(e_i)$ of $Y'\supseteq Y$ as in the first part of the proof).

Consider the tuples $(n_1,\ldots, n_k)$ with at least one gap in $M$ between each consecutive coordinates.  With an essentially identical argument as in the first part of the proof, for all such tuples $(n_1,\ldots, n_k)$  we have 
$$\|h(n_1,\ldots, n_i)\|\le 2K, 1\le i\le k.$$

Since the array $(x^j_i)_{i\in M, 1\le j\le k}$ is weakly null, there exist tuples  $n_1<\ldots n_k <m_1<\ldots<m_k$ with gaps so that

$$\Big\|\sum_{i=1}^k x^i_{n_i}\Big\|\le (1+\ep)\Big\|\sum_{i=1}^k
x^i_{n_i}-\sum_{i=1}^k x^i_{m_i}\Big\|.$$

Applying the Corollary \ref{lin-asy}, we get a pair of tuples as above so
that on one hand we still have
$$\left\|\phi\Big(\sum_{i=1}^k x^i_{n_i}\Big)-\phi\Big(\sum_{i=1}^k
x^i_{m_i}\Big)\right\|\ge 4KC,$$
and on the other hand we have
\begin{align*}
&\Big\|\phi\Big(\sum_{i=1}^k x^i_{n_i}\Big)-h_0-\sum_{i=1}^{k}h(n_1, \ldots, n_i)\Big\|< \ep,\\
&\Big\|\phi\Big(\sum_{i=1}^k
x^i_{m_i}\Big)-h_0-\sum_{i=1}^{k}h(m_1, \ldots, m_i)\Big\|< \ep,
\end{align*}
for some permissible vectors $h(n_1)<\ldots<h(n_1,
\ldots, n_i)<h(m_1)<\ldots<h(m_1, \ldots, m_k)$ in $Y$ such that 
$$\|h(n_1, \ldots, n_i)\|, \|h(m_1, \ldots, m_i)\|\le 2K, \  i\ge 1.$$
Thus
\begin{align*}
4KC&\le \left\|\phi\Big(\sum_{i=1}^k x^i_{n_i}\Big)-\phi\Big(\sum_{i=1}^k x^i_{m_i}\Big)\right\|\\
&\le \left\|\sum_{i=1}^{k}h(n_1, \ldots, n_i)-\sum_{i=1}^{k}h(m_1, \ldots, m_i)  \right\|+2\ep\\
&\le 2KC+2\ep,
\end{align*}
a contradiction for small $\ep>0$. The last inequality again uses the fact that $Y$ is $C$-asymptotic-$c_0$.

\end{proof}

\section{Coarse non-universality of dual spaces with spreading bases} \label{sec:spreading}

In this section we prove coarse non-universality of dual Banach spaces with a conditional spreading basis. An important ingredient of the proof is an infinite pigeonhole argument via Ramsey ultrafilters.   
Recall that by Lemma~\ref{comp} and Fact~\ref{rufact} we may assume without loss of generality that 
there is a Ramsey ultrafilter.
This principle is also used in the proofs of Sections \ref{quasi-reflexive} and \ref{section-gen-james}. 
The infinite pigeonhole argument used in Theorem~\ref{spreading}
is used to show non-embedding of $c_0$ in spaces where  the stronger statement of non-embedding of the Kalton graphs may not hold (see Remark \ref{motakis}).

The definition of spreading bases and a brief discussion of conditional ones were recalled around equation \ref{def-spreading}. Additionally, if $(x_i)$ is a conditional spreading basis for $X$ then $(x_i)$ is boundedly
complete if and only if $c_0$ does not linearly embed into $X$ (Theorem 2.3,
\cite{FOSZ}). For instance, the boundedly complete basis of the James space is
conditional spreading.

We will make use of the following lemma. This is essentially the proof of the
fact that if $(u_i)$ is a block basis of a conditional spreading basis $(x_i)$
and $S(u_i)=0$ for all $i$, then $(u_i)$ is suppression 1-unconditional, that is, for any $n$ and any subset $A\subseteq \{1, \ldots, n\}$, and scalars $(a_i)$ we have 
$\|\sum_{i\in A}a_i x_i\|\le \|\sum_{i=1}^n a_i x_i\|$. (Lemma 2.4,
\cite{FOSZ}).

Let $l_1<\ldots<l_s$, $t_1<\ldots<t_s$, and $u=\sum_{i}^s a_ix_{l_i}$ be a
finitely supported vector.  We write $u\sim u'$ if $u'=\sum_{i=1}^s a_ix_{t_i}$.
We call $u'$ {\em a spread of} $u$. Since $(x_i)$ is 1-spreading $\|u\|=\|u'\|$.

\begin{lemma}\label{spreads} Assume $(x_i)$ is a basis such that the summing functional has norm 1. Let $\ep>0$ and $s, n\in \N$. Then there exists
$m>n$ such that for all $f\in X^*$ with $\|f\|\le 1$ and all $u=\sum_{i=1}^s a_ix_{l_i}$ with
$\|u\|=1$ and $F=\{l_1<\ldots<l_s\}$ there exists
$F'=\{t_1<\ldots<t_s\}\subseteq [n,m]$ and $\lambda\in [-1,1]$ such that if
$u'\sim u$ with $u'=\sum_{i=1}^s a_ix_{t_i}$ then 
$$\left|f(u')-\lambda\sum_{i=1}^s a_i\right|<\ep.$$
\end{lemma}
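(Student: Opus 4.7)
My plan is a straightforward pigeonhole argument on the values $\{f(x_t)\}_{t\in[n,m]}$, exploiting linearity of $f$. Since $\|f\|\leq 1$ and (normalizing via spreading) $\|x_t\|=1$, these values all lie in $[-1,1]$ uniformly over $f$; for $m$ large enough, the pigeonhole principle forces at least $s$ of them to cluster within a short subinterval, and on spreads supported on those indices $f$ will act, up to error $\ep$, as a scalar multiple of the summing functional.

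Concretely, let $K$ be the basis constant of $(x_i)$. From $\|u\|=1$ and the coordinate-functional bound one gets $|a_i|\leq 2K$, hence $\sum_{i=1}^s|a_i|\leq 2Ks$. I would set $\eta=\ep/(2Ks)$, partition $[-1,1]$ into $p=\lceil 2/\eta\rceil$ closed subintervals of length at most $\eta$, and take $m=n+sp-1$. For any $f\in B_{X^*}$, the $sp$ values $f(x_n),\ldots,f(x_m)$ lie in $[-1,1]$, so by pigeonhole at least $s$ of them, at indices $t_1<\cdots<t_s$ in $[n,m]$, lie in a common subinterval $I_j$. Taking $\lambda$ to be the midpoint of $I_j$ (so $\lambda\in[-1,1]$) and $F'=\{t_1,\ldots,t_s\}$, for any spread $u'=\sum a_i x_{t_i}$ we obtain
\[
\Big|f(u')-\lambda\sum_{i=1}^s a_i\Big|=\Big|\sum_{i=1}^s a_i\bigl(f(x_{t_i})-\lambda\bigr)\Big|\leq 2Ks\cdot\eta=\ep.
\]

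The entire content of the lemma is the uniformity of $m$ in $f$ and $u$; I anticipate no real obstacle, since the partition of $[-1,1]$ and the coefficient bound depend only on $s$, $\ep$, and the basis constant, not on $f$ or $u$. Notably, neither conditionality nor spreading of $(x_i)$ is actually required in this argument, nor is $\|S\|=1$ used directly; the latter hypothesis serves only to normalize the range of $\lambda$.
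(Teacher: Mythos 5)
Your proof is correct and follows essentially the same route as the paper's: a pigeonhole on the values $f(x_t)$, $t\in[n,m]$, lying in $[-1,1]$, with $m$ chosen uniformly in $f$ and $u$ so that $s$ of these values cluster in an interval of prescribed length. In fact your version is the more careful one: the error term is $\sum_{i=1}^s |a_i|\,|f(x_{t_i})-\lambda|$, so the pigeonhole precision must be calibrated against $\sum_i|a_i|\le 2Ks$ as you do, whereas the paper's closing appeal to $|\sum_i a_i|\le 1$ elides exactly this point.
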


\begin{proof}
Let $n\in \N$. For all $f\in X^*$ with $\|f\|\le 1$, we have $|f(x_i)|\le 1$. Therefore, by the pigeonhole principle there exists $m$ with the following
property:

For all $f\in X^*$ with $\|f\|\le 1$ there exists $\lambda\in [-1,1]$ and
$F'=\{t_1<\ldots<t_s\}\subseteq [n,m]$ so that for all $1\le i\le s$ we have
$|f(x_{t_i})-\lambda|<\ep$. Then for $u'=\sum_{i=1}^s a_ix_{t_i}$ we have 
$$f(u')=f\Big(\sum_{i=1}^s a_ix_{t_i}\Big)=\sum_{i=1}^s a_if(x_{t_i}),$$ hence
the result follows since $|\sum_{i=1}^s a_i|\le 1$.
\end{proof}

\begin{theorem}\label{spreading} Assume $X$ is a dual space with a conditional spreading basis $(x_i)$. Then $c_0$ does not coarsely embed into $X$.
\end{theorem}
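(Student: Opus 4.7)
The plan is to assume toward contradiction that $\phi\colon c_0\to X$ is a coarse embedding, and to run Kalton's interlacing argument on the summing basis $(s_i)$ of $c_0$. The linearization in Corollary~\ref{lin-Ramsey} will provide a ``block'' decomposition of $\phi(\vec n)$, while Lemma~\ref{spreads} will force norming functionals to behave like a scalar multiple of the summing functional $S$ on carefully placed spreads---this is where the conditional spreading hypothesis bites. By Fact~\ref{rufact} we may assume a Ramsey ultrafilter $\sU$ on $\N$ exists. We may also assume $(x_i)$ is boundedly complete: as recalled before the theorem, otherwise $c_0$ linearly embeds into $X$, in which case the conclusion fails outright. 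Let $K$ be the Lipschitz-at-large-distances constant of $\phi$ and $\rho$ its lower modulus. For each $k$ put $\phi(\vec n):=\phi(\sum_{i=1}^k s_{n_i})$; then $\|\phi(\vec n)\|\ge \rho(k)\to\infty$ while $\|\phi(\vec n)-\phi(\vec m)\|\le K$ whenever $\vec n,\vec m\in\N^k$ are interlacing.

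Fix $k$ and $\ep>0$. Apply Corollary~\ref{lin-Ramsey} to obtain $H\in\sU$ such that for every $\vec n\in H^k$
\[
\Big\|\phi(\vec n)-h_0-\sum_{i=1}^k h(n_1,\dots,n_i)\Big\|<\ep,\qquad \|h(n_1,\dots,n_i)\|\le K+\ep,
\]
the norm bound coming from part (ii) of Theorem~\ref{linearization} applied to interlacing pairs at $c_0$-distance $1$. Next, by iterated use of the Ramsey property of $\sU$ (after $\ep$-discretizing the bounded coefficient and support-size data) I would refine $H$ so that for each slot $i$ the blocks $h(n_1,\dots,n_i)$ are all spreads of a common template $w_i$ of fixed support size $s_i$, and so that the summing-functional values $\mu_i:=S(h(n_1,\dots,n_i))$ depend only on $i$. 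This successive stabilization is the ``infinite pigeonhole'' emphasized in the paper's introduction and is the first place where $\sU$ is used essentially.

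With the stabilization in place, let $\phi^*(\vec n)\in X^*$ be a norming functional for $\phi(\vec n)$ with $\|\phi^*(\vec n)\|\le 1$. For each $i$ apply Lemma~\ref{spreads} inside the gap $(n_i^+,l_{i+1}]$ coming from the iterative structure of Theorem~\ref{linearization}: this yields positions $F'_i$ and a scalar $\lambda_i\in[-1,1]$ such that any spread of $w_i$ supported on $F'_i$ is evaluated by $\phi^*(\vec n)$ as approximately $\lambda_i S$. I then use the $\sU$-positive freedom in the linearization to select $\vec m\in H^k$ interlacing with $\vec n$ whose blocks $h(m_1,\dots,m_i)$ land precisely in $F'_i$. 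Since $(x_i)$ is $1$-spreading, $S$ is invariant on spreads, so $\phi^*(\vec n)[h(m_1,\dots,m_i)]\approx \lambda_i\mu_i$ and, by the linearization for $\phi(\vec m)$,
\[
\phi^*(\vec n)[\phi(\vec m)]\approx \phi^*(\vec n)[h_0]+\sum_{i=1}^k\lambda_i\mu_i.
\]

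The endgame, which I expect to be the main obstacle, is to combine this with $\phi^*(\vec n)[\phi(\vec n)]=\|\phi(\vec n)\|$ and the interlacing bound $|\phi^*(\vec n)[\phi(\vec n)-\phi(\vec m)]|\le K$ to obtain a $k$-independent upper bound
\[
\|\phi(\vec n)\|\le K+\|h_0\|+\Big|\sum_{i=1}^k\lambda_i\mu_i\Big|.
\]
Here one exploits the freedom in Lemma~\ref{spreads}---the admissible $\lambda_i$ is any frequently attained value of $\phi^*(\vec n)$ on $[n_i^+,l_{i+1}]$---and performs a further Ramsey homogenization across these achievable values. The aim is either to force sign cancellation in $\sum_i\lambda_i\mu_i$ against the stabilized $\mu_i$, or, when the biorthogonal-supported part of $\phi^*(\vec n)$ has already decayed at the relevant scale, to force $\lambda_i\approx 0$. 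Either alternative yields a bound independent of $k$ on the right-hand side, which contradicts $\|\phi(\vec n)\|\ge\rho(k)\to\infty$ and completes the proof.
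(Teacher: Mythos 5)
Your overall architecture is the single-map Kalton interlacing argument: lower-bound $\|\phi(\vec n)\|$ by $\rho(k)$, upper-bound it via a norming functional, interlacing, and Lemma~\ref{spreads}. The step you flag as ``the main obstacle'' is in fact a genuine gap, and it is exactly the point the paper's proof is designed to avoid. After Lemma~\ref{spreads} you arrive at $\phi^*(\vec n)[\phi(\vec m)]\approx \phi^*(\vec n)[h_0]+\sum_{i}\lambda_i\mu_i$ with $\mu_i=S(h(n_1,\dots,n_i))$, and you need $\bigl|\sum_i\lambda_i\mu_i\bigr|$ bounded independently of $k$. But each $\mu_i$ can be of order $K$ (the blocks only satisfy $\|h(n_1,\dots,n_i)\|\le K+\ep$, and $S$ has norm one, so nothing prevents $|\mu_i|\approx K$ for every $i$), and the $\lambda_i\in[-1,1]$ produced by Lemma~\ref{spreads} are whatever values $\phi^*(\vec n)$ happens to take on the nodes in the gap; neither of your two escape routes (sign cancellation against the $\mu_i$, or $\lambda_i\approx 0$) is argued, and neither holds in general. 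So the upper bound you obtain is $O(kK)$, which does not contradict $\rho(k)\to\infty$. There is also a secondary problem: Lemma~\ref{spreads} hands you positions $F_i'$ depending on the functional $f=\phi^*(\vec n)$, and you cannot in general choose $\vec m\in H^k$ so that the linearization blocks $h(m_1,\dots,m_i)$ (which are projections of weak-star cluster points, not freely placeable vectors) land on those prescribed positions. The paper sidesteps this by applying $f$ to \emph{auxiliary spreads} $h'(m_1,\dots,m_i)$ placed at the positions from Lemma~\ref{spreads}, and then converting back to $\|\phi(\vec m)-\phi(\vec n)\|$ using the spread-invariance of the norm.

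The missing idea is the uncountable family of maps. The paper defines, for every infinite $A\subseteq\N$, a map $\phi^A(\vec n)=\phi\bigl(\sum_{i=1}^k s_{n_i}(A)\bigr)$ using the partial sums of the unit vector basis along $A$, linearizes each $\phi^A$, and then uses separability of $X$ to extract an uncountable collection $\mathcal C$ of sets $A$ whose linearizations agree up to $\ep$ in the relevant data (the $h_0^A$ and the summing-functional values of the blocks). For distinct $A,B\in\mathcal C$ one gets the lower bound $\|\phi^A(\vec n)-\phi^B(\vec n)\|\ge\rho(k)$ for the \emph{same} tuple $\vec n$ (because $\|\sum_i s_{n_i}(A)-\sum_i s_{n_i}(B)\|_{c_0}=k$), while the blocks of the difference $h^A(\cdot)-h^B(\cdot)$ have summing functional essentially zero by the pigeonhole. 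It is only for such zero-sum blocks that Lemma~\ref{spreads} gives $f(h'^A-h'^B)\approx\lambda_i\cdot 0=0$, which is what collapses the upper bound to $2K+3\ep$. Without some device of this kind your quantities $\mu_i$ do not vanish and the argument cannot close; indeed, as Remark~\ref{motakis} indicates, a purely quantitative single-map interlacing estimate cannot work for this class of spaces, which is why the proof must pass through the uncountable pigeonhole.
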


\begin{proof}

Suppose there is a coarse embedding $\phi:c_0\to X$. Then there exists a constant
$K>0$ and a function $\rho$ with $\lim_{t\to \infty}\rho(t)=\infty$ such that
for all $x,y$ with $\|x-y\|\ge 1$ we have

$$\rho(\|x-y\|)\le \|\phi(x)-\phi(y)\|\le K\|x-y\|.$$

On the other hand, since $X$ is a dual space, there is no linear embedding of $c_0$. By our remarks at the beginning of this section, the basis $(x_i)$ must be boundedly complete.

For every infinite $A=(l_j)\subseteq \N$ and $\vec{n} =(n_1, \ldots, n_k)\in
\N^k$ put
 
$$\phi^A(\vec{n})=\phi\left(\sum_{i=1}^k s_{n_i}(A)\right)$$ where
$s_n(A)=\sum_{j=1}^ne_{l_j}$ and $(e_j)$ is the unit vector basis of $c_0$.

Let $\sU$ be non-principal Ramsey ultrafilter. Let $\ep>0$, $k\in \N$. By
Theorem \ref{linearization} applied to each $\phi^A$ there exists $H'_A\in \sU$
and $h^A_0\in X$ such that for all $\vec{n} \in {H'_A}^k$ we have permissible
(with respect to $(x_i)$) $(h^A(n_1) \ldots, h^A(n_1, \ldots, n_k))$ satisfying
$$\Big\|\phi^A(\vec{n})-h^A_0-\sum_{i=1}^kh^A(n_1, \ldots, n_i)\Big\|< \ep.$$

Let $S\in X^*$ be the summing functional. Since $\sU$ is Ramsey, by a partition
argument, there exist $H_A\subseteq H'_A$ in $\sU$ such that for all
$n_1<\ldots<n_k, n'_1<\ldots<n'_k\in H_A^k$ we have 

\begin{equation*}
\sum_{i=1}^k\Big|S\Big( h^A(n_1, \ldots, n_i)-h^A(n'_1, \ldots, n'_i)\Big)\Big|<\ep.
\end{equation*}

Since $X$ is separable and the collection $(\phi^A)_{A\subseteq \N}$ is
uncountable, by the pigeonhole principle there exists uncountable $\mathcal C$
such that for all $A,B\in \mathcal C$ and for all $(n_i)_{i=1}^k \in (H_A\cap
H_B)^k$ we have

\begin{equation}\label{same sums}
\begin{aligned}
\|h^A_0-h^B_0\|\le \ep/k,\ \text{and}&\\
\sum_{i=1}^k\Big|S\Big(h^A(n_1, \ldots, n_i)-h^B(n_1, \ldots, n_i)\Big)\Big|&<\ep.
\end{aligned}
\end{equation}

Now pick distinct $A, B\in \mathcal C$. Then for all 
$\vec{n}=(n_1,\ldots, n_k)\in (H_A\cap H_B)^k$ large enough that $n_1\geq\min(A\Delta B)$ we have 
\begin{equation}\label{rho k}
\|\phi^A(\vec{n})-\phi^B(\vec{n})\|\ge \rho(k)
\end{equation}
since $\|\sum_{i=1}^k s_{n_i}(A)-\sum_{i=1}^k s_{n_i}(B)\|_{c_0}=k$ for such vectors.

Let $\vec{m}=(m_1,\ldots,m_k)$ and $\vec{n}=(n_1,\ldots,n_k)$ be sufficiently
spread out and large enough as above such that $m_1<n_1<\ldots<m_k<n_k\in
(H_A\cap H_B)^{2k}$ and let 
\begin{align}\label{A-B-interlacing}h^A(m_1), h^B(m_1)<h^A(n_1), h^B(n_1)&<\nonumber\\\ldots<h^A(m_1,\ldots, m_k),  &h^B(m_1,\ldots, m_k)
<h^A(n_1, \ldots, n_k), h^B(n_1, \ldots, n_k)
\end{align} 
be the permissible block vectors as in Theorem \ref{linearization} for
$\phi^A(\vec{m}), \phi^B(\vec{m})$ and $\phi^A(\vec{n}),\phi^B(\vec{n})$ whose
supports are in indicated order.

Let $f\in X^*$ with $\|f\|\le 1$ with
$$f(\phi^A(\vec{n})-\phi^B(\vec{n}))=\|\phi^A(\vec{n})-\phi^B(\vec{n})\|.$$ Let
$h'^A(m_1,\ldots, m_i)-h'^B(m_1,\ldots, m_i)$'s be the spreads of
$h^A(m_1,\ldots, m_i)-h^B(m_1,\ldots, m_i)$'s with 
\begin{align*}h^A&(m_1)-h^B(m_1)<h'^A(m_1)-h'^B(m_1)<h^A(n_1)-h^B(n_1)<\ldots\\ &<h^A(m_1,\ldots,m_k)-h^B(m_1,\ldots,m_k)<h'^A(m_1,\ldots, m_k)-h'^B(m_1,\ldots, m_k)\\&<h^A(n_1, \ldots, n_k)-h^B(n_1, \ldots, n_k)
\end{align*} 
for the functional $f$ as in Lemma \ref{spreads}. By \ref{same sums}
$$
 \sum_{i=1}^k \Big|S\Big( h'^A(m_1,\ldots, m_i)-h'^B(m_1,\ldots, m_i)\Big)\Big|= \sum_{i=1}^k \Big|S\Big( h^A(m_1,\ldots, m_i)-h^B(m_1,\ldots, m_i)\Big)\Big|<\ep,$$ and by Lemma \ref{spreads} for some $(\lambda_i)_{i=1}^k\in [-1,1]^k$ we have
 
$$\left|f\left(\sum_{i=1}^k  h'^A(m_1,\ldots, m_i)-h'^B(m_1,\ldots,
m_i)\right)\right|\le \sum_{i=1}^k |\lambda_i|\big| S (h'^A(m_1,\ldots,
m_i)-h'^B(m_1,\ldots, m_i))\big|+\ep<2\ep.$$

Then, suppressing the approximations of Theorem \ref{linearization},

\begin{align*}
&\|\phi^A(\vec{n})-\phi^B(\vec{n})\|=f\left(\phi^A(\vec{n})-\phi^B(\vec{n})\right)\\
&\le f\left(\sum_{i=1}^k h^A(n_1, \ldots, n_i)-\sum_{i=1}^k h^B(n_1, \ldots, n_i)\right)+\ep\\
&\le f\left(\sum_{i=1}^k h^A(n_1, \ldots, n_i)-\sum_{i=1}^k h'^A(m_1,\ldots, m_i)\right)+f\left(-\sum_{i=1}^k h^B(n_1, \ldots, n_i)+\sum_{i=1}^k h'^B(m_1,\ldots, m_i)\right)\\
&\ \ \ \ \ \ +f\left(\sum_{i=1}^k h'^A(m_1,\ldots, m_i)-\sum_{i=1}^k h'^B(m_1,\ldots, m_i)\right)+\ep\\
&\le\left\|\sum_{i=1}^k h^A(n_1, \ldots, n_i)-\sum_{i=1}^k h'^A(m_1,\ldots, m_i)\right\|+\left\|\sum_{i=1}^k h^B(n_1, \ldots, n_i)-\sum_{i=1}^k h'^B(m_1,\ldots, m_i)\right\|+3\ep\\
&=\left\|\sum_{i=1}^k h^A(n_1, \ldots, n_i)-\sum_{i=1}^k h^A(m_1, \ldots, m_i)\right\|+\left\|\sum_{i=1}^k h^B(n_1, \ldots, n_i)-\sum_{i=1}^k h^B(m_1, \ldots, m_i)\right\|+3\ep\\
&\le\|\phi^A(\vec{n})-\phi^A(\vec{m})\|+\|\phi^B(\vec{m})-\phi^B(\vec{n})\|+3\ep\\
&\le 2K+3\ep,
\end{align*}
which contradicts \ref{rho k} as $\rho(k)\to\infty$.
\end{proof}

\section{Coarse non-universality of quasi-reflexive spaces}\label{quasi-reflexive}
In this section, we show that a modification of the proof from the previous
section works if we replace the spreading basis assumption on $X$ by a `small
dual' assumption, that is, $X^*=[e^*_i]\oplus Z^*$ for some finite dimensional
space $Z^*$.
 
\begin{theorem}\label{quasireflexive} Suppose $X$ has boundedly complete basis
$(e_i)$ and $X^*=[e^*_i]\oplus Z^*$ for some finite dimensional space $Z^*$.
Then $c_0$ does not coarsely embed into $X$.
\end{theorem}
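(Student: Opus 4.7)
The plan is to combine the linearization of Theorem~\ref{linearization} with a Ramsey-ultrafilter-based stabilization exploiting the finite dimension of $Z^*$, and then to close via an interlacing computation analogous to (though more delicate than) the one used for the reflexive case. Suppose for contradiction that $\phi\colon c_0\to X$ is a coarse embedding, with $\|\phi(\vec n)-\phi(\vec m)\|\le K$ for interlacing tuples $\vec n,\vec m\in\N^k$ and $\|\phi(\vec n)\|\ge\rho(k)\to\infty$ for $\vec n\in\N^k$. By Lemma~\ref{comp} and Fact~\ref{rufact} we may assume the existence of a Ramsey ultrafilter $\sU$. Apply Corollary~\ref{lin-Ramsey} to obtain $H\in\sU$ and approximate linearizations
\[
\phi(\vec n)\approx h_0+\sum_{i=1}^k h(n_1,\ldots,n_i),\qquad \phi^*(\vec n)\approx h^*_0+\sum_{i=1}^k h^*(n_1,\ldots,n_i)+z^*(\vec n)
\]
to within $\ep$, with $z^*(\vec n)\in Z^*$ and the block supports arranged as in Theorem~\ref{linearization}.

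The next step exploits the finite-dimensionality of $Z^*$. Since $\{z^*(\vec n):\vec n\in H^k\}$ is bounded in $Z^*$, it has compact closure; covering it by a finite $\ep$-net and applying the Ramsey property yields $H'\in\sU$ on which $\|z^*(\vec n)-z^*\|<\ep$ for a fixed $z^*\in Z^*$. Since for each fixed $k$ the scalars $z^*(\phi(\vec n))$ are bounded, one further Ramsey stabilization produces $H''\in\sU$ and a constant $\gamma$ with $|z^*(\phi(\vec n))-\gamma|<\ep$ for all $\vec n\in (H'')^k$.

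The interlacing computation then runs as follows. For two interlacing tuples $\vec n,\vec m\in(H'')^k$ with $n_1<m_1<n_2<m_2<\cdots$, arrange the linearization so that at each level $i$ the block supports of $h(n_1,\ldots,n_i),h^*(n_1,\ldots,n_i)$ are disjoint from those of $h(m_1,\ldots,m_i),h^*(m_1,\ldots,m_i)$. Then the biorthogonality of $(e_i),(e^*_i)$ kills every $h^*(n_1,\ldots,n_i)(h(m_1,\ldots,m_j))$ with $i,j\ge 1$, leaving
\[
\phi^*(\vec n)(\phi(\vec m))\approx h^*_0(h_0)+z^*(\phi(\vec m))\approx h^*_0(h_0)+\gamma.
\]
Combining this with $\phi^*(\vec n)(\phi(\vec n))=\|\phi(\vec n)\|$ and $|\phi^*(\vec n)(\phi(\vec n)-\phi(\vec m))|\le K$ gives
\[
\|\phi(\vec n)\|\le h^*_0(h_0)+\gamma+K+O(\ep).
\]

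The hard part, and the ``bit of combinatorics'' promised in the introduction, is to promote this to a bound on $\|\phi(\vec n)\|$ that is independent of $k$, thereby contradicting $\rho(k)\to\infty$. Since $\gamma\approx z^*(\phi(\vec n))$, substituting back yields the self-consistent estimate
\[
\|\phi(\vec n)\|-z^*(\phi(\vec n))\le h^*_0(h_0)+K+O(\ep),
\]
which immediately closes the argument when $\|z^*\|<1$. In the general case one must use the finite dimension $d=\dim Z^*$ more seriously: applying the above interlacing comparison across $d+1$ pairwise interlacing tuples in $H''$, a non-trivial linear dependency among the $z^*(\vec n^{(j)})$'s in $Z^*$ combined with the initial-segment consistency statement (iii) of Theorem~\ref{linearization} transfers to an essentially algebraic cancellation of the $z^*$-contributions on differences of norming functionals, yielding the desired $k$-independent bound and producing the contradiction that rules out a coarse embedding of $c_0$ into $X$.
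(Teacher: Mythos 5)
Your setup---the linearization via Corollary~\ref{lin-Ramsey}, the Ramsey stabilization of $z^*(\vec n)$ using compactness of the ball of the finite-dimensional $Z^*$, and the interlacing computation leading to $\|\phi(\vec n)\|\le h^*_0(h_0)+z^*(\phi(\vec n))+K+O(\ep)$---is sound as far as it goes, but the step you yourself flag as ``the hard part'' is a genuine gap, and it cannot be filled within the framework you have set up. The term $z^*(\phi(\vec n))$ is not an error term: the $Z^*$-component of a norm-one functional need not have norm less than $1$, and $z^*(\phi(\vec n))$ can be comparable to $\|\phi(\vec n)\|$ itself (in the James space, $Z^*$ is spanned by the summing functional, which can essentially norm $\phi(\vec n)$). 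Your proposed remedy---taking $d+1$ interlacing tuples and extracting an ``essentially algebraic cancellation'' from a linear dependency in $Z^*$ together with part (iii) of Theorem~\ref{linearization}---is never carried out, and there is a structural reason it cannot be: your entire argument uses only the restriction of $\phi$ to the set $\{\sum_{i=1}^k s_{n_i}:\vec n\in\N^k\}$ for a single large $k$, together with the two facts that interlacing tuples map $K$-close and widely separated tuples map $\rho(k)$-far. That is exactly the data furnished by an equi-coarse embedding of the Kalton graphs $(\N^k,d_{\K})$. By Remark~\ref{motakis}, the Kalton graphs \emph{do} equi-coarsely embed into the generalized James space $\J(e_i)$ over the dual Tsirelson basis, which is quasi-reflexive of order one and hence satisfies the hypotheses of Theorem~\ref{quasireflexive}. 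So no argument of the shape you propose can reach a contradiction.

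The paper's proof circumvents this obstruction by exploiting the full domain $c_0$: for every infinite $A\subseteq\N$ it forms $\phi^A(\vec n)=\phi\big(\sum_{i=1}^k s_{n_i}(A)\big)$ along the summing vectors of $A$, linearizes each $\phi^A$, and then uses separability of $X$ together with an uncountable pigeonhole over the family $(\phi^A)_A$ to find $A\neq B$ whose linearization data (the $h_0$-parts and the action of a finite $\ep$-net of $B_{Z^*}$ on the blocks) agree up to $\ep$. The lower bound $\rho(k)$ is then applied not to $\|\phi(\vec n)\|$ but to $\|\phi^A(\vec n)-\phi^B(\vec n)\|$, whose preimages are at $c_0$-distance $k$; the norming functional of this \emph{difference} decomposes as $\sum_i u^*_{n_i}+z^*_{i_0}$ with the $u^*_{n_i}$ killed by support disjointness and the $Z^*$-part nearly annihilating $\sum_i h^A(m_1,\dots,m_i)-h^B(m_1,\dots,m_i)$ by the pigeonhole. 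That is precisely the cancellation of the $Z^*$-contribution that your single-map version lacks. To salvage your approach you would need to introduce some analogue of this uncountable family of restrictions of $\phi$.
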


\begin{proof}
For every infinite $A=(l_j)\subseteq \N$ and $(n_1, \ldots, n_k)\in \N^k$ put
 
$$\phi^A(n_1, \ldots, n_k)=\phi\left(\sum_{i=1}^k s_{n_i}(A)\right)$$ where
$s_n(A)=\sum_{j=1}^ne_{l_j}$ and $(e_j)$ is the unit vector basis of $c_0$.

Let $\sU$ be a non-principle Ramsey ultrafilter. Let $\ep>0, k\in\N$. By Theorem
\ref{linearization} applied to each $\phi^A$ there exist $H_A\in \sU$, $h^A_0$ so
that for all $(n_1, \ldots, n_k)\in H_A^k$ there exists blocks $(h^A(n_1), \ldots,
h^A(n_1, \ldots, n_k))\in X$ so that
$$\|\phi^A(n_1,\ldots, n_k)-\sum_{i=0}^k h^A(n_1, \ldots, n_i)\|<\ep,$$ 

Let $(z^*_i)_{i=1}^m$ be a finite $\ep$-net in $B_{Z^*}$. Since $\sU$ is Ramsey, for each $A\subseteq \N$
there exists $H_A'\in \sU, H_A'\subseteq H_A$ so that  for all $(m_i)_{i=1}^k,
(n_i)_{i=1}^k\in H_A'$ and all $1\le i\le m$ we have 
$$|z^*_i(\sum_{i=0}^kh^A(n_1, \ldots, n_i)-\sum_{i=0}^kh^A(m_1,\ldots,m_i))|<\ep.$$

Since $X$ is separable and the collection $(\phi^A)_{A\subseteq \N}$ is
uncountable, by the pigeonhole principle there exists uncountable $\mathcal{C}$ such that
for all $A,B\in \mathcal{C}$ 
$$\|h^A_0-h^B_0\|\le \ep,$$ and for all $(n_i)_{i=1}^k\in (H_A'\cap H_B')^k$ we have 

$$|z^*_i(\sum_{i=0}^kh^A(n_1, \ldots, n_i)-\sum_{i=0}^kh^B(n_1, \ldots,
n_i))|<\ep,$$ for all $1\le i\le m$.

Now pick distinct $A, B\in \mathcal{C}$. Then for all $\vec n=(n_1, \ldots,
n_k)\in (H_A'\cap H_B')^k$ with $n_1\geq \min(A\Delta B)$ we have 
\begin{equation}\label{rho k}
\|\phi^A(\vec{n})-\phi^B(\vec{n})\|\ge \rho(k)
\end{equation}
since $\|\sum_{i=1}^k s_{n_i}(A)-\sum_{i=1}^k s_{n_i}(B)\|_{c_0}=k$ for such vectors.

Let $\vec{m}=(m_1<\ldots<m_k)$ and $\vec{n}=(n_1<\ldots<n_k)$ be such that
$m_1<n_1<\ldots<m_k<n_k\in (H_A'\cap H_B')^{2k}$ and where

\begin{align*}h^A(m_1)-h^B(m_1)<h^A(n_1)-h^B(n_1)<\ldots<h^A(m_1,\ldots,
m_k)-\\h^B(m_1,\ldots, m_k)<h^A(n_1, \ldots, n_k)-h^B(n_1, \ldots, n_k)
\end{align*} be the
block vectors as in Theorem \ref{linearization} for $\phi^A(\vec{m}),
\phi^A(\vec{n})$ and $\phi^B(\vec{n}), \phi^B(\vec{m})$. 

Let $f\in X^*$ with $\|f\|\le 1$ with
$$f(\phi^A(\vec{n})-\phi^B(\vec{n}))=\|\phi^A(\vec{n})-\phi^B(\vec{n})\|.$$ 

By part (iii) of Theorem \ref{linearization} applied to $\phi^A(\vec{n})-\phi^B(\vec{n})$,
there exists functionals $(u^*_{n_i})_{i=1}^k$ so that
$u^*_{n_i}(h_{m_j}(A)-h_{m_j}(B))=0$ for all $1\le i, j\le k$ (since
$u^*_{n_i}$'s are supported around $n_i$'s) and $z^*_{i_0}$ for some $1\le
i_0\le m$ so that we may take $f$ of the form
 
$$f=\sum_{i=1}^ku^*_{n_i}+z^*_{i_0}.$$

(Note that since $h^A_0-h^B_0$ is approximately zero we may take $u^*_0$ to be
zero.) Thus, we have 

$$\left\langle f,  \sum_{i=1}^k h_{m_i}(A)-\sum_{i=1}^k
h_{m_i}(B)\right\rangle=\left\langle \sum_{i=1}^ku^*_{n_i}+z^*_{i_0},
\sum_{i=1}^k h_{m_i}(A)-\sum_{i=1}^k h_{m_i}(B)\right\rangle \leq \ep$$

Then, suppressing those approximations which come from Theorem \ref{linearization},
\begin{align*}
&\|\phi^A(\vec{n})-\phi^B(\vec{n})\|=f\left(\phi^A(\vec{n})-\phi^B(\vec{n})\right)\\
&=f\left(\sum_{i=1}^k h^A(n_1, \ldots, n_i)-\sum_{i=1}^k h^B(n_1, \ldots, n_i)\right)+\ep\\
&=f\left(\sum_{i=1}^k h^A(n_1, \ldots, n_i)-\sum_{i=1}^k h^A(m_1,\ldots,m_i)\right)+f\left(-\sum_{i=1}^k h^B(n_1, \ldots, n_i)+\sum_{i=1}^k h^B(m_1,\ldots,m_i)\right)\\
&+f\left(\sum_{i=1}^k h^A(m_1,\ldots, m_i)-\sum_{i=1}^k h^B(m_1,\ldots, m_i)\right)+\ep\\
&\le\left\|\sum_{i=1}^k h^A(n_1, \ldots, n_i)-\sum_{i=1}^k h^A(m_1,\ldots,m_i)\right\|+\left\|\sum_{i=1}^k h^B(n_1, \ldots, n_i)-\sum_{i=1}^k h^B(m_1,\ldots, m_i)\right\|+2\ep\\
&=\left\|\sum_{i=1}^k h^A(n_1, \ldots, n_i)-\sum_{i=1}^k h^A(m_1,\ldots,m_i)\right\|
+\left\|\sum_{i=1}^k h^B(n_1, \ldots, n_i)-\sum_{i=1}^k h^B(m_1,\ldots,m_i)\right\|+2\ep\\
&\le\|\phi^A(\vec{n})-\phi^A(\vec{m})\|+\|\phi^B(\vec{m})-\phi^B(\vec{n})\|+2\ep\\
&\le 2K+2\ep,
\end{align*}
which contradicts \ref{rho k} as $\rho(k)\to\infty$.

\end{proof}

\begin{remark}\label{motakis}
Even though the above argument is quantitative, that is, $k$ can be chosen in advance, this does not prove that $(\N^k, d_{\K})_k$ do not equi-coarsely embed into $X$. It is important in the above proof that $\phi$ is defined on entire $c_0$ so that for every $A\subseteq \N$, the maps $\phi^A$ exist. Note that given a family of equi-coarse embeddings $\phi_k:\N^k\to X$ there is no sensible way to define $\phi^A_k$'s as in the above proof. In fact, by an observation due to P. Motakis (see \cite{LPP}), the generalized James space $\J(e_i)$ over the unit vector basis $(e_i)$ of the dual Tsirelson space $T^*$ is quasi-reflexive of order 1 and thus $c_0$ does not coarsely embed into $\J(e_i)$, however, $(\N^k, d_{\K})_k$ equi-coarsely embed into it. In the next two sections we will completely clarify non-equi-coarse embeddings of $(\N^k, d_{\K})_k$ vs non-embedding of $c_0$ into the generalized James and the James tree spaces.
\end{remark}

\section{Non-embedding of the Kalton graphs into the James space}\label{section-J}
In this section, we prove 

\begin{theorem}\label{J}
The Kalton's interlacing graphs $(\N, d_{\K})$ do not equi-coarsely embed into the James space $\J$.
\end{theorem}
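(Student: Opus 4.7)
The plan is to argue by contradiction, combining the asymptotic linearization (Corollary~\ref{lin-asy}) with the specific combinatorial structure of the James norm. Assume $(\phi_k\colon (\N^k, d_\K) \to \J)_k$ is an equi-coarse family with Lipschitz-at-large-distances constant $K$ and modulus $\rho$ satisfying $\rho(t)\to\infty$. For large $k$, I aim to produce a pair $\vec n, \vec m \in \N^k$ with $d_\K(\vec n, \vec m) = k$ but $\|\phi_k(\vec n)-\phi_k(\vec m)\|_\J$ bounded by a constant depending only on $K$, contradicting $\rho(k)\to\infty$.

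First, apply Corollary~\ref{lin-asy} to $\phi_k$ to pass to a full subtree of $\N^k$ on which $\phi_k(\vec n) \approx h_0 + \sum_{i=1}^k h(n_1,\ldots,n_i)$, with the blocks $h(n_1,\ldots,n_i)$ permissible with respect to the basis $(e_i)$ of $\J$ and of norm at most $K+\ep$. For interlacing $\vec n,\vec m$ in the subtree, combining $\|\phi_k(\vec n)-\phi_k(\vec m)\|_\J\leq K$ with the $\J$-norm lower bound obtained by choosing partition points at the endpoints of the interleaved blocks (so that successive partial-sum differences recover the summing-functional values $S(h(n_1,\ldots,n_i))$ and $S(h(m_1,\ldots,m_i))$, where $S$ is the summing functional on $\J$) yields the $\ell_2$-type estimate
\[
\Big(\sum_{i=1}^k S(h(n_1,\ldots,n_i))^2 + \sum_{i=1}^k S(h(m_1,\ldots,m_i))^2\Big)^{1/2}\leq K+2\ep.
\]
In particular $\sum_i S(h(n_1,\ldots,n_i))^2 \leq (K+2\ep)^2$ for every $\vec n$ in the subtree.

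The combinatorial heart of the argument is then a stabilization step: by a Ramsey-type refinement of the subtree combined with the 1-spreading property of the basis (via Lemma~\ref{spreads}), I would pass to a subtree where the summing values $S(h(n_1,\ldots,n_i))$ and, up to spread, the block shapes become essentially independent of $\vec n$. For a pair $\vec n<\vec m$ at $d_\K$-distance $k$ in this refined subtree, the summing-functional contributions to $\phi_k(\vec n)-\phi_k(\vec m)$ cancel up to $\ep$, and the remaining zero-sum block differences are controlled by the $\ell_2$-equivalence of zero-sum block sequences in $\J$ together with the stabilized shape, forcing $\|\phi_k(\vec n)-\phi_k(\vec m)\|_\J$ to be bounded independently of $k$.

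The main obstacle, flagged in Remark~\ref{motakis}, is that the $\phi^A$ trick used in Theorem~\ref{quasireflexive} is unavailable for equi-coarse families, since we have only one map $\phi_k$ per $k$. The stabilization must therefore be performed internally to a single $\phi_k$. This is where the 1-spreading property of the $\J$ basis and the $\ell_2$-like behavior of zero-sum blocks are indispensable: they are precisely what allows stabilization to translate into genuine norm control, a feature that fails in more general quasi-reflexive spaces such as $\J(e_i)$ built over a Tsirelson-type basis, explaining why Theorem~\ref{J} is specific to $\J$.
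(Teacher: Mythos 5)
Your setup (linearization via Corollary~\ref{lin-asy}, blocks of norm at most $K+\ep$, the $\ell_2$ bound on the summing-functional values for interlaced pairs) is fine, but the core of the argument is missing and the route you sketch cannot close. You propose to take a fully separated pair $\vec n<\vec m$ of $k$-tuples and show $\|\phi_k(\vec n)-\phi_k(\vec m)\|$ is bounded by a constant after stabilizing $S(h(n_1,\dots,n_i))$ and the ``block shapes.'' Two problems. First, even granting $S(h(n_1,\dots,n_i))\approx c_i$ independently of $\vec n$, the pairs $u_i=h(n_1,\dots,n_i)-h(m_1,\dots,m_i)$ are \emph{not} successive blocks when all of $\vec n$ precedes all of $\vec m$ (the supports interleave as $n_1<\dots<n_k<m_1<\dots<m_k$), so Lemma~\ref{J-blocks} does not apply to them; and even where it does apply, its upper bound is $2(\sum_i\|u_i\|^2)^{1/2}$, which from the individual bounds $\|u_i\|\le 2K$ only gives $O(K\sqrt{k})$ --- not a constant, and not a contradiction since $\rho$ may grow slower than $\sqrt{k}$. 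Worse, no constant bound is possible for such pairs: taking the single interval covering all the $\vec n$-blocks shows $\|\phi_k(\vec n)-\phi_k(\vec m)\|\gtrsim|\sum_i c_i|$, which is only controlled by $K\sqrt{k}$. Stabilization ``up to spread'' does not help either, since differences of spreads are not small in $\J$. Also note your $\ell_2$ bound is on the values $S(h(\cdot))$, whereas what the argument actually needs is the $\ell_2$ bound on the \emph{norms} $\|h(\cdot)\|$; the latter is the content of (\ref{uniform-l_2-bound}) and follows from the lower $\ell_2$ estimate of the $\J$-norm applied to interlaced block differences, which you never derive.

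The missing idea is the paper's change of scale: one works with $M$-tuples for $M=M(k,K,\ep)\gg k$ rather than $k$-tuples. Proposition~\ref{l_2-blocks} stabilizes the block norms $a_i=\|h(n_1,\dots,n_i)\|$ and establishes $\sum_{i=1}^M a_i^2\le K^2$ uniformly; the pigeonhole Lemma~\ref{pigeonhole} then produces a window $[N,N+k]$ with $\sum_{i=N}^{N+k}a_i<\ep$. The contradiction comes from comparing two $M$-tuples that agree outside this window and are separated inside it: they are at $d_\K$-distance $k$, the first $N-1$ blocks cancel exactly, the window contribution is at most $2\ep$ by the triangle inequality, and the tail contribution is an honest successive zero-sum block sequence controlled by $(4+\ep)(K+\ep)$ via Lemma~\ref{J-blocks}. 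Without this window construction you have no pair at $d_\K$-distance $k$ whose image distance you can actually bound independently of $k$. (Incidentally, Lemma~\ref{spreads} and the spreading machinery play no role in the paper's proof of Theorem~\ref{J}; they belong to the non-quantitative argument of Theorem~\ref{spreading}.)
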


As mentioned in the introduction, this theorem was first proved in \cite{LPP}. Our proof is simpler and more importantly it generalizes to give the same result for the James tree spaces. However, this generalization is highly nontrivial and will be given only later in final part of the paper.

Let  $(e_i)$ be the boundedly complete basis for the James space $\J$. Recall that the norm of the James space $\J$ with respect to the boundedly
complete basis (the summing basis) $(e_i)$ is given by 

$$\Big\|\sum_i a_i e_i\Big\|=\sup\left\{\Big(\sum_{j=1}^k\big(\sum_{i\in
I_j}a_i\big)^2\Big)^{1/2}: I_1<\ldots<I_k\right\}$$ where the sup is taken over
all intervals $I_j$'s with no gaps in between.

Let $S$ be the summing functional, that is, $S(\sum_i a_i e_i)=\sum_i a_i$. Then
$S$ is bounded and we may assume $\|S\|=1$. First, we recall a well known fact.

\begin{lemma}\label{J-blocks} Suppose that $(u_i)$ is a block basis of $(e_i)$
in $J$ with $S(u_i)=0$. Then for all $n$, we have
$$\Big(\sum_{i=1}^n \|u_i\|^2\Big)^{1/2}\le\Big\|\sum_{i=1}^n u_i\Big\|\le
2\Big(\sum_{i=1}^n \|u_i\|^2\Big)^{1/2}.$$
\end{lemma}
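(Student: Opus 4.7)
The plan is to split the proof into the two inequalities, both established directly from the definition of the James norm. For the lower bound, I would choose, for each $i\le n$, ordered intervals $I^i_1<\cdots<I^i_{k_i}$ contained in $\supp(u_i)$ that attain the supremum defining $\|u_i\|$; since $u_i$ is finitely supported, the supremum is a max, so such intervals exist. Because $(u_i)$ is a block basis, the concatenation $I^1_1<\cdots<I^1_{k_1}<I^2_1<\cdots<I^n_{k_n}$ is an admissible family of disjoint successive intervals in the definition of $\|\sum_i u_i\|$. On each $I^i_j$, the coefficient sum for $\sum_i u_i$ equals the coefficient sum for $u_i$ alone, since the other $u_{i'}$ have disjoint support. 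Hence evaluating the James formula on this concatenated family yields exactly $\bigl(\sum_i\|u_i\|^2\bigr)^{1/2}$, which is the left-hand inequality.

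For the upper bound, given any admissible family $J_1<\cdots<J_k$ and writing $\sum_i u_i=\sum_l c_l e_l$, I would cut each $J_p$ at the boundaries between successive $\supp(u_i)$'s. Because $J_p$ is a single interval and the supports of the $u_i$ are ordered, $J_p$ intersects at most two $\supp(u_i)$'s partially, giving a leftmost piece $L_p\subseteq\supp(u_{i^-_p})$ and a rightmost piece $R_p\subseteq\supp(u_{i^+_p})$; any intermediate $\supp(u_i)$ with $i^-_p<i<i^+_p$ is contained fully in $J_p$ and so contributes $S(u_i)=0$ to $\sum_{l\in J_p}c_l$. Therefore $\sum_{l\in J_p}c_l=\sum_{l\in L_p}c_l+\sum_{l\in R_p}c_l$, and $(x+y)^2\le 2x^2+2y^2$ yields
$\sum_p\bigl(\sum_{l\in J_p}c_l\bigr)^2\le 2\sum_p\bigl(\sum_{l\in L_p}c_l\bigr)^2+2\sum_p\bigl(\sum_{l\in R_p}c_l\bigr)^2$.

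To finish, I would regroup the right-hand pieces according to which $u_i$ they lie in. The one slightly delicate (but routine) step is verifying that, because the $J_p$'s are ordered and each $J_p$ contributes at most one partial piece to any fixed $\supp(u_i)$, for every $i$ the collection of $L_p$'s and $R_p$'s falling inside $\supp(u_i)$ forms an ordered family of disjoint subintervals of $\supp(u_i)$, hence is admissible in the supremum defining $\|u_i\|$. This gives $\sum_p\bigl(\sum_{l\in L_p}c_l\bigr)^2+\sum_p\bigl(\sum_{l\in R_p}c_l\bigr)^2\le\sum_i\|u_i\|^2$, and taking square roots and sup over the $J_p$'s yields $\|\sum_i u_i\|\le\sqrt{2}\bigl(\sum_i\|u_i\|^2\bigr)^{1/2}$, which is within the claimed constant $2$. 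No deeper obstacle arises; the whole argument is a direct unpacking of the James norm formula exploiting the hypothesis $S(u_i)=0$ to kill the interior pieces of the partitioned $J_p$'s.
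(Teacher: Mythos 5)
Your proof is correct and is essentially the paper's own argument: the lower bound by concatenating norming interval families for the individual $u_i$, and the upper bound by cutting each norming interval for $\sum_i u_i$ at the support boundaries, using $S(u_i)=0$ to annihilate the fully covered blocks, and absorbing the two surviving end-pieces at the cost of a factor of $2$ (your $(x+y)^2\le 2x^2+2y^2$ even gives $\sqrt{2}$, within the stated constant). The only caveat is cosmetic: since the paper's James norm takes the supremum over consecutive intervals with no gaps in between, one should remark that inserting extra intervals to fill gaps only increases the sum of squares, so both your concatenated family (lower bound) and your regrouped pieces inside each $\supp(u_i)$ (upper bound) can be completed to admissible families without loss.
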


The left hand side inequality is immediate from the definition and holds for all
block vectors. The right hand side inequality follows from the fact that if $I_1,
\ldots, I_k$ are successive intervals norming $\sum_{i=1}^n u_i$ then if $I_j$'s
contain the support of an $u_i$ entirely, then since $S(u_i)=0$, breaking up the intervals will affect the norm by at most a factor of 2. The inequality then follows from the triangle inequality.

Suppose that we have equi-coarse embeddings $$\phi_k:(\N^k,
d_{\K})\to \J,\ k\in\N.$$ We may assume there are a constant $K$ and non-decreasing
function $\rho$ with $\lim_{t\to\infty}\rho(t)=\infty$ so that for all $\vec{n},
\vec{m}\in \N^k$ and $k\in\N$ we have 

\begin{equation}\label{J-equi-coarse}\rho(d_{\K}(\vec{n}, \vec{m}))\le
\|\phi_k(\vec{n})-\phi_k(\vec{m})\|\le K d_{\K}(\vec{n}, \vec{m}).
\end{equation}

By Theorem \ref{linearization} for almost all tuples $\vec{n}$, $\phi(\vec{n})$ approximately can be written as sum of blocks vectors. Below we will suppress the approximations for simplicity and we will assume all tuples $\vec{n}$ are as in Theorem \ref{linearization}, and of the form 
$$\phi_k(\vec n)=h_0+\sum_{i=1}^k h(n_1,\ldots, n_i)$$ where $h_0$ is a fixed block vector (independent of $\vec n$), and blocks $h(n_1,\ldots, n_i)$ have successive support with respect to $(e_i)$ with $\|h(n_1,\ldots, n_i)\|\le K$ for all $1\le i\le k$.

The key to the argument is Proposition \ref{l_2-blocks} below which roughly says that for almost all interlacing tuples $n_1<m_1<\ldots<n_k<m_k$, the norm of 
$$\phi_k(\vec{n})-\phi_k(\vec{m})=\sum_{i=1}^k h(n_1,\ldots, n_i)-h(m_1,\ldots, m_i)$$
is ($2+\ep$)-equivalent to the $\ell_2$-norm of the sequence of the blocks of differences $(h(n_1,\ldots, n_i)-h(m_1,\ldots, m_i))_{i=1}^k$. In James space 
$\J$, by Lemma \ref{J-blocks} above, this is true for all blocks $(u_i)$ with $S(u_i)=0$ where $S$ is the summing functional. The proof will exploit this fact.

\begin{prop}\label{l_2-blocks}
Fix $M\in\N$ and $\phi_M:\N^M\to \J$ satisfying (\ref{J-equi-coarse}) as above. Let $\sU$ be a Ramsey ultrafilter and $\ep>0$. Then there exists $H=H(\ep,M)\in\sU$ and $(a_i)_{i=1}^M\in \R^M$ such that for all $\vec{n}\in H^M$, $\phi_M(\vec n)=h_0+\sum_{i=1}^M h(n_1,\ldots, n_i)$ satisfy the following.

(i) $\left|\|h(n_1,\ldots, n_i)\|-a_i \right|<\ep/M.$

(ii) $\|(a_i)_{i=1}^M\|_2\le K+\ep.$ 

(iii) For all $\vec n, \vec m\in H^M$ and intervals $I\subseteq [1, M]$ if the restrictions $(n_i)_{i\in I}, (m_i)_{i\in I}$ are interlacing, then, letting $u_i=h(n_1,\ldots, n_i)-h(m_1,\ldots, m_i)$, $i\in I$, we have 
$$\|(a_i)_{i\in I}\|_2\le \Big\|\sum_{i\in I} u_i\Big\|\le
(4+\ep)\|(a_i)_{i\in I}\|_2.$$
\end{prop}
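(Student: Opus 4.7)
The plan is to combine Corollary~\ref{lin-Ramsey} with several further Ramsey stabilizations and then extract the conclusion from Lemma~\ref{J-blocks}. Fix a tolerance $\delta$, to be chosen at the end so that $\delta \ll \ep/M$. First I would apply Corollary~\ref{lin-Ramsey} to $\phi_M$ at parameter $\delta$, obtaining $H_0 \in \sU$, $h_0 \in \J$, and block vectors $h(n_1,\ldots,n_i)$ with $\|h(n_1,\ldots,n_i)\| \le K+\delta$ such that $\|\phi_M(\vec n) - h_0 - \sum_i h(n_1,\ldots,n_i)\| < \delta$ for every $\vec n \in H_0^M$. Since these norms lie in $[0, K+\delta]$, one Ramsey partition per $i \le M$ of $\N^i$ by membership in an $(\ep/M)$-net will produce a set $H_1 \subseteq H_0$ in $\sU$ together with numbers $a_1,\ldots,a_M$ realizing condition~(i). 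A further Ramsey stabilization of the summing-functional values $S(h(n_1,\ldots,n_i))$ to numbers $s_i$ within tolerance $\delta$ then shrinks $H_1$ to a final $H \in \sU$ on which both stabilizations hold.

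Next I would fix $\vec n, \vec m \in H^M$ and an interval $I$ on which $(n_i)_{i\in I}$ and $(m_i)_{i\in I}$ interlace, and set $u_i = h(n_1,\ldots,n_i) - h(m_1,\ldots,m_i)$. The sequence $(u_i)_{i\in I}$ will have two crucial structural properties: its members have disjoint successive supports in $(e_i)$ (because the support windows produced by the linearization sit between consecutive coordinates of each tuple, and interlacing then puts these windows in the correct order; one more Ramsey pigeonhole on the relative position of the windows secures this uniformly), and $|S(u_i)| < 2\delta$ by the stabilization of the $s_i$'s. Perturbing each $u_i$ by an appropriately scaled basis vector in $\supp u_i$ produces $u_i'$ with $S(u_i')=0$ and $\|u_i' - u_i\|\le 2\delta$, so Lemma~\ref{J-blocks} gives
$$\Big(\sum_{i \in I}\|u_i'\|^2\Big)^{1/2} \le \Big\|\sum_{i \in I} u_i'\Big\| \le 2\Big(\sum_{i \in I}\|u_i'\|^2\Big)^{1/2},$$
which transfers to $(u_i)_{i\in I}$ at the cost of an additive $O(M\delta)$ error. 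To link $\|u_i\|$ to $a_i$ I would use monotonicity of $(e_i)$: since $\supp h(n_1,\ldots,n_i)$ and $\supp h(m_1,\ldots,m_i)$ are disjoint and comparably ordered, a basis projection gives $\|h(n_1,\ldots,n_i)\| \le \|u_i\|$, while the triangle inequality yields $\|u_i\| \le \|h(n_1,\ldots,n_i)\| + \|h(m_1,\ldots,m_i)\|$; together with~(i) this is the sandwich $a_i - \ep/M \le \|u_i\| \le 2a_i + 2\ep/M$. Feeding this into the display above and absorbing the errors by taking $\delta$ small enough then yields~(iii).

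Finally,~(ii) will fall out of the lower half of~(iii) applied with $I = [1,M]$ to any strictly interlacing pair $\vec n, \vec m \in H^M$ (available since $H$ is infinite): $d_\K(\vec n,\vec m) = 1$ forces $\|\sum_i u_i\| \le \|\phi_M(\vec n)-\phi_M(\vec m)\| + 2\delta \le K + 2\delta$, whence $\|(a_i)\|_2 \le K + \ep$. The main obstacle lies in the structural point above, namely that $(u_i)_{i\in I}$ really be a block sequence of $(e_i)$; this requires that, as the tuples vary over $H^M$, the support windows of $h(n_1,\ldots,n_i)$ be uniformly sandwiched between consecutive coordinates, which I would enforce using the freedom in the cutoffs $l_i, n_i^+$ from Theorem~\ref{linearization} (working with $H$ sufficiently spread) together with one extra Ramsey stabilization of the support ordering.
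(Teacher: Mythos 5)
Your proposal is correct and follows essentially the same route as the paper: linearize via Corollary~\ref{lin-Ramsey}, Ramsey-stabilize the norms and the summing-functional values, apply Lemma~\ref{J-blocks} to the differences $u_i$ (whose supports interlace by the support-window structure of Theorem~\ref{linearization}), and sandwich $\|u_i\|$ between $a_i$ and $2a_i$ by bimonotonicity and the triangle inequality. The only cosmetic difference is that the paper proves the bound $\sum_i\|h(n_1,\ldots,n_i)\|^2\le K^2$ first and reads (ii) off from it, while you obtain (ii) from the lower half of (iii) with $I=[1,M]$ --- the same computation in a different order.
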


\begin{proof}

First we note that for almost all $\vec{n}$ we have

\begin{equation}\label{uniform-l_2-bound}
\sum_{i=1}^M \|h(n_1, \ldots,
n_i)\|^2\le K^2.
\end{equation}

Indeed, by Theorem \ref{linearization} for $\sU$-large set of interlacing tuples
$m_1<n_1\ldots<m_M<n_M$ we have $\phi(n_1,\ldots, n_M)-\phi(m_1, \ldots, m_M)$
is of the form $\sum_{i=1}^M h(n_1, \ldots, n_i)-h(m_1, \ldots, m_i)$ for some
blocks $h(m_1)<h(n_1)<h(m_1, m_2)<h(n_1, n_2)<\ldots<h(m_1, \ldots, m_M)<h(n_1,
\ldots, n_M)$. Thus, by Lemma \ref{J-blocks} and (\ref{J-equi-coarse}), we have 

\begin{align*}
\sum_{i=1}^M\|h(n_1, \ldots, n_i)\|^2&\le \sum_{i=1}^M\|h(n_1, \ldots, n_i)-h(m_1, \ldots, m_i)\|^2\\
&\le \Big\|\sum_{i=1}^M h(n_1, \ldots, n_i)-h(m_1, \ldots, m_i)\Big\|^2\\
&=\|\phi(n_1, \ldots, n_M)-\phi(m_1, \ldots, m_M)\|^2\le K^2.
\end{align*}

Note that the first inequality above follows from the basis constant being 1.

Let $\ep>0$. By Ramsey ultrafilter we can find a homogeneous set $H=H(\ep, M)\in \sU$ so that
for all $(n_1, \ldots, n_M), (m_1, \ldots, m_M)\in H^M$ with
$n_1<m_1<\ldots<n_M<m_M$ we have 

\begin{align*}
\max_{1\le i\le M}\Big|\|h(n_1, \ldots, n_i)\|-\|h(m_1, \ldots, m_i)\|\Big|<\ep/M,\ \text{and}\\
\max_{1\le i\le M}\Big|S(h(n_1, \ldots, n_i))-S(h(m_1, \ldots, m_i))\Big|<\ep/M.
\end{align*}

Fix $(n_1, \ldots, n_M)\in H^M$ and put $a_i=\|h(n_1, \ldots, n_i)\|$ for $1\le
i\le M$. Then (i), (ii) and (iii) of Proposition \ref{l_2-blocks} now follow from Lemma \ref{J-blocks} and 
\ref{uniform-l_2-bound} (take $u_i=(h(n_1,\ldots, n_i)-h(m_1, \ldots, m_i))$ for
$i\in I$) and standard approximations.

\end{proof}

We will also make use of the following pigeonhole lemma.
  
\begin{lemma}[Pigeonhole]\label{pigeonhole} Let $\ep>0$, $k\in \N$. Let
  $M>k^2K^2/\ep^2$ be a multiple of $k$. If $\sum_{i=1}^{M}a^2_i\le K^2$ then there is
  $1\le N<N+k\le M$ such that
  $$\sum_{i=N}^{N+k}|a_i|<\ep.$$
  \end{lemma}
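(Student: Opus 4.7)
The plan is a clean partition-and-Cauchy-Schwarz argument. First I would partition the index set $[1,M]$ into $M/k$ disjoint consecutive blocks $B_1,\dots,B_{M/k}$, each of length $k$ (this is where the divisibility hypothesis $k\mid M$ is used). The idea is that if the conclusion failed on every such block, the $\ell_2$ mass on each block would have to be large enough that the total $\ell_2$ mass exceeds $K^2$, contradicting the hypothesis $\sum_{i=1}^M a_i^2\le K^2$.

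Concretely, suppose for contradiction that for every $N$ with $1\le N<N+k\le M$ we have $\sum_{i=N}^{N+k}|a_i|\ge \ep$; in particular this holds when $[N,N+k)$ is one of the blocks $B_j$. By the Cauchy--Schwarz inequality, on each such block
\[
\ep^2\;\le\;\Big(\sum_{i\in B_j}|a_i|\Big)^2\;\le\;|B_j|\sum_{i\in B_j}a_i^2\;=\;k\sum_{i\in B_j}a_i^2,
\]
so $\sum_{i\in B_j}a_i^2\ge \ep^2/k$. Summing this estimate over the $M/k$ disjoint blocks gives
\[
\sum_{i=1}^{M}a_i^2\;\ge\;\frac{M}{k}\cdot\frac{\ep^2}{k}\;=\;\frac{M\ep^2}{k^2}\;>\;K^2,
\]
where the strict inequality uses the assumption $M>k^2K^2/\ep^2$. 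This contradicts $\sum_{i=1}^M a_i^2\le K^2$, so some block $B_j=[N,N+k)$ must satisfy $\sum_{i\in B_j}|a_i|<\ep$, which furnishes the required $N$ (up to the harmless off-by-one in the upper endpoint, handled by absorbing one extra index into a block of length $k+1$ if desired, which only strengthens the hypothesis $M>k^2K^2/\ep^2$ by an irrelevant constant factor).

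There is no real obstacle here; the only thing to be careful about is applying Cauchy--Schwarz in the correct direction (so that a lower bound on the $\ell_1$ norm on a block yields a lower bound on the $\ell_2$ norm) and tracking how the divisibility hypothesis gets absorbed into the partition. The role of the lemma in the surrounding argument is to force a gap of $k$ consecutive negligible norms $\|h(n_1,\dots,n_i)\|$ in the sequence produced by Proposition~\ref{l_2-blocks}, and this bare pigeonhole statement is all that is needed.
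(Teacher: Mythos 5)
Your proof is correct and is essentially the same argument as the paper's: the paper applies the pigeonhole principle to the partial sums $x_j=\sum_{i\le j}a_i^2$ along the arithmetic progression $k,2k,\dots,M$ to find a block with small $\ell_2$ mass and then applies Cauchy--Schwarz, which is just the contrapositive of your block-by-block summation. Both versions share the same harmless off-by-one between windows of length $k$ and $k+1$, which you correctly note is absorbed into the choice of $M$.
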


\begin{proof}
 Put $x_j=\sum_{i=1}^{j}a^2_i$. Then we have $0\le x_1\le \ldots<x_M\le K^2$.
 Let $\ep'=\ep^2/k$. By the pigeonhole principle applied to the sequence
 $x_k, x_{2k}, \ldots, x_{M}$ we must have $l$ such that
 $|x_{lk}-x_{(l+1)k}|<\ep'$. Put $N=lk$. Then by Cauchy-Schwarz
$$\sum_{i=N}^{N+k}|a_i|\le
\Big(\sum_{i=N}^{N+k}|a_i|^2\Big)^{1/2}\sqrt{k}<\ep.$$
\end{proof}

We now return to the proof of Theorem \ref{J}. Let $k\in\N$ be so that
$\rho(k)\ge 10K$, $\ep=1/k$, and $M=M(k,K,\ep)$ be as in Lemma \ref{pigeonhole}.
By (i) of Proposition (\ref{l_2-blocks}) there exist $H\in\sU$ and $(a_i)_{i=1}^M$ such that for all
$(n_1, \ldots, n_M)$ we have, ignoring tiny approximations,
$a_i=\|h(n_1, \ldots, n_i)\|$ for $1\le i\le M$. Let $N$ be as in Lemma \ref{pigeonhole} so that
$\sum_{i=N}^{N+k}a_i<\ep$. Since $\sU$ is Ramsey we may stabilize the $N$ in Lemma \ref{pigeonhole} for all tuples from $H$. Consider two tuples $\vec{n}, \vec{m}$ in $H^M$ of
the form
\begin{align*}
n_1=m_1< \ldots<& n_{N-1}=m_{N-1} \\ 
<n_N< \ldots &< n_{N+k}<m_N<\ldots<m_{N+k}\\
&< n_{N+k+1}=m_{N+k+1}< \ldots< n_M=m_M
\end{align*} 

That is, two tuples are identical except on the interval $[N, N+k]$ where one
comes after the other. Thus $d_{\K}(\vec{n}, \vec{m})=k$. Then
$\phi(\vec{n})-\phi(\vec{m})$ is of the form 
$$u+\sum_{i=N+k+1}^M h(n_1, \ldots,
n_i)-h(m_1, \ldots, m_i)$$ where $u=\sum_{i=N}^{N+k} h(n_1, \ldots, n_i)-h(m_1,
\ldots, m_i)$, and since the first $N-1$ blocks are
identical they cancel out. Then by (ii) and (iii) of Proposition \ref{l_2-blocks} and Lemma \ref{pigeonhole} we have

\begin{align*}
&10K<\rho(k)\le \|\phi(\vec{n})-\phi(\vec{m})\|\\
&\le \left\|\sum_{i=N}^{N+k}h(n_1, \ldots, n_i)- h(m_1, \ldots, m_i)\right\|+\left\|\sum_{i=N+k+1}^M h(n_1, \ldots, n_i)-h(m_1, \ldots, m_i)\right\|\\
&\le 2 \sum_{i=N}^{N+k} a_i +(4+\ep)\left(\sum_{i=N+k+1}^M a_i^2\right)^
{1/2}\\
&\le 2\ep+(4+\ep)(K+\ep),
\end{align*}
which is a contradiction for small $\ep>0$.

\section{Coarse non-universality of dual James spaces}\label{section-gen-james}

Let $(e_i)$ be a basis for a Banach space $E$. We may assume without loss of generality that $(e_i)$ is 1-suppression unconditional as $\J(e_i)$ is naturally isometric to $\J(f_i)$ where $(f_i)$ is `unconditionalization' of $(e_i)$ (see \cite{BHO}).
 The James space $\J(e_i)$ over $(e_i)$ is defined as follows. For $(a_i)\in c_{00}$,   
$$\Big\|\sum a_i u_i \Big\|_{J(e_i)}=\sup\Big\| \sum_{i=1}^k\Big(\sum_{j=p(i)}^{q(i)}a_j\Big)e_{p(i)}\Big\|$$
where the sup is over all $k\in\N$ and $1\le p(1)\le q(1)<p(2)\le q(2)<\ldots<p(k)\le q(k)$. We recall the basic facts about these spaces from \cite{BHO}. 

i) The basis $(u_i)$ is boundedly complete if and only if $c_0$ doesn't linearly embed into $\J(e_i)$.

ii) ${\J(e_i)}^*=[S\cup (u^*_i)_{i=1}^\infty]$ where $S$ is the summing functional (which is bounded on $\J(e_i)$).

iii) If $c_0$ and $\ell_1$ do not linearly embed into $\J(e_i)$ then $\J(e_i)$ is quasi-reflexive of order one.

\begin{corollary}\label{gen-james-graphs}
Let $(e_i)$ be an unconditional basis for a Banach space $E$, and $\J(e_i)$ be the James space over $(e_i)$. 

i) Suppose $\J(e_i)$ doesn't contain a linear copy of $\ell_1$. Then $c_0$ coarsely embeds into $\J(e_i)$ if and only if $c_0$ linearly embeds into $\J(e_i)$.

ii) The Kalton graphs $(\N^k, d_{\K})$  do not equi-coarsely embed into $\J(e_i)$ if  $\ell^n_{\infty}$'s do not belong to the asymptotic structure of $E$. In particular, the Kalton graphs $(\N^k, d_{\K})$  do not equi-coarsely embed into $\J_p=\J(e_i)$ where $(e_i)$ is the unit vector basis of $\ell_p$ for $1<p<\infty$.
\end{corollary}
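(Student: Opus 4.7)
The plan for part (i) is to apply Theorem \ref{quasireflexive} directly. The direction ``linearly embeds $\Rightarrow$ coarsely embeds'' is automatic. For the contrapositive of the other direction: if $c_0$ does not linearly embed into $\J(e_i)$, then fact (i) above makes the basis $(u_i)$ of $\J(e_i)$ boundedly complete, and fact (ii) gives $\J(e_i)^* = [u_i^*]\oplus [S]$ with $[S]$ one-dimensional. Theorem \ref{quasireflexive} with $Z^* = [S]$ then yields the conclusion that $c_0$ does not coarsely embed into $\J(e_i)$.

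The plan for part (ii) is to adapt the proof of Theorem \ref{J} in Section \ref{section-J} to general $\J(e_i)$. Two ingredients of the $\J$ proof must be generalized. First, Lemma \ref{J-blocks} (the $\ell_2$-equivalence for zero-summing-functional block bases in $\J$) must be replaced by an analog for $\J(e_i)$: if $(v_\ell)$ is a block basis of $(u_j)$ with $S(v_\ell) = 0$, then $\|\sum_\ell v_\ell\|_{\J(e_i)}$ is equivalent, up to a universal constant depending only on the basis constant of $(e_i)$, to $\|\sum_\ell \|v_\ell\|\,e_{p(\ell)}\|_E$, where $p(\ell)$ is (say) the first index of $\supp(v_\ell)$. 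The proof follows the same idea as Lemma \ref{J-blocks}: the zero-sum condition lets us modify any norming family of intervals in the $\J(e_i)$-norm formula so that each sits inside a single $\supp(v_\ell)$, at cost of a bounded factor, after which unconditionality of $(e_i)$ reduces the resulting supremum to the $E$-norm above. Combining this with Corollary \ref{lin-Ramsey} and Ramsey stabilization of both $\|h(n_1,\ldots,n_i)\|$ and $S(h(n_1,\ldots,n_i))$ yields the $\J(e_i)$ analog of Proposition \ref{l_2-blocks}: for each $M$ there exist $H\in\sU$ and constants $(a_i)_{i=1}^M$ such that, for interlacing pairs $\vec n,\vec m\in H^M$, the differences $u_i := h(n_1,\ldots,n_i)-h(m_1,\ldots,m_i)$ satisfy $S(u_i)\approx 0$ and, for every interval $I$ on which the restrictions interlace, $\|\sum_{i\in I} u_i\|_{\J(e_i)}$ is $C$-equivalent to $\|\sum_{i\in I} a_i e_{p_i}\|_E$, with the global bound $\|\sum_{i=1}^M a_i e_{p_i}\|_E \lesssim K$.

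Second, the $\ell_2$-pigeonhole Lemma \ref{pigeonhole} must be replaced by a pigeonhole derived from $\ell_\infty^n\notin \{E\}_n$: given $k\in\N$ and $\delta>0$, there is $M=M(k,\delta)$ such that whenever $(b_i)_{i=1}^M\subseteq \R$ and sufficiently far-apart indices $(p_i)_{i=1}^M$ satisfy $\|\sum b_i e_{p_i}\|_E\lesssim K$, some length-$(k+1)$ window $[N,N+k]$ has $\|\sum_{i=N}^{N+k} b_i e_{p_i}\|_E < \delta$. The contrapositive would extract a length-$k$ far-out normalized block sequence in $E$ that is $O(1)$-equivalent to the unit basis of $\ell_\infty^k$, contradicting $\ell_\infty^k\notin\{E\}_k$; the far-apart requirement on the $(p_i)$ is free from the linearization, since we may refine the ultrafilter set to push the supports of $h(n_1,\ldots,n_i)$ arbitrarily far. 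With both ingredients in place, I conclude exactly as in Theorem \ref{J}: fix $k$ with $\rho(k)$ sufficiently large, stabilize the position $N$ via Ramsey, and pick $\vec n,\vec m\in H^M$ agreeing outside $[N,N+k]$ and interlacing on it. Then $\phi_M(\vec n)-\phi_M(\vec m)$ splits as a ``window'' block supported on $[N,N+k]$ of $\J(e_i)$-norm $\lesssim \delta$, plus a ``tail'' block on $[N+k+1,M]$ of $\J(e_i)$-norm $\lesssim K$ by the first ingredient, contradicting $\|\phi_M(\vec n)-\phi_M(\vec m)\|\geq \rho(k)$ for $\delta$ small.

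The principal obstacle I anticipate is the second ingredient: rendering the abstract asymptotic hypothesis ``$\ell_\infty^n\notin\{E\}_n$'' into a quantitative consecutive-interval pigeonhole applicable to bounded-norm sums $\|\sum b_i e_{p_i}\|_E$, in a form interoperable with the interlacing argument. The ``in particular'' statement for $\J_p$ with $1<p<\infty$ is then immediate: for the unit vector basis of $\ell_p$, any normalized disjointly supported $(x_i)_{i=1}^n$ satisfy $\|\sum x_i\|_{\ell_p} = n^{1/p} \to \infty$, hence $\ell_\infty^n \notin \{\ell_p\}_n$ for every $n$.
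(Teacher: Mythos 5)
Part (i) of your proposal is correct and is the paper's own reduction: the paper routes through fact (iii) (quasi-reflexivity of order one, which for a boundedly complete basis yields $\J(e_i)^*=[u_i^*]\oplus Z^*$ with $\dim Z^*\le 1$) and then quotes Theorem \ref{quasireflexive}; your route through facts (i) and (ii) is the same reduction. One caution: fact (ii) is not hypothesis-free (for $E=\ell_1$ one has $\J(e_i)=\ell_1$ isometrically, whose dual is nonseparable), so the standing assumption that $\ell_1$ does not embed is genuinely used when you invoke it.

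For part (ii) the paper simply deduces the statement from Theorem \ref{tree-space}; your plan of generalizing the proof of Theorem \ref{J} directly is the natural route (it is Theorem \ref{tree-space} with the branch reduction trivialized), but your first ingredient is false as stated, and the argument leans on it at both the window and the tail estimates. For a general $1$-unconditional $(e_i)$ --- even with $\ell_\infty^n\notin\{E\}_n$ --- the norm $\|\sum_\ell v_\ell\|_{\J(e_i)}$ of a zero-sum block basis is \emph{not} equivalent to $\|\sum_\ell \|v_\ell\| e_{p(\ell)}\|_E$. The $\ell_2$ case works because $\ell_2$ is ``self-blocking''; in general, collapsing the cluster of coordinates $\{e_{o(I_j)}\}_{j\in A_\ell}$ that norm $v_\ell$ onto the single coordinate $e_{p(\ell)}$ can change the $E$-norm by an unbounded factor in either direction. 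Concretely, let $E$ carry the $1$-unconditional norm $\|\sum a_ie_i\|=\max\big(\|(a_{2i-1})_i\|_{\ell_2},\|(a_{2i})_i\|_{\ell_1}\big)$, which satisfies $\ell_\infty^n\notin\{E\}_n$, and take $v_\ell=u_{q_\ell}-u_{q_\ell+1}$ with $q_\ell$ odd: then $S(v_\ell)=0$, $\|v_\ell\|_{\J(e_i)}=1$, $\|\sum_{\ell\le n}\|v_\ell\|e_{p(\ell)}\|_E=\sqrt n$, yet the singleton segments $\{q_\ell+1\}$ give $\|\sum_{\ell\le n}v_\ell\|_{\J(e_i)}\ge n$. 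So neither direction of your equivalence holds, and neither the claimed window bound $\lesssim\delta$ nor the tail bound $\lesssim K$ follows.

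The repair is exactly what the paper does in Section \ref{Sec:JamesTree}, and it avoids any two-sided $E$-norm identification. First, the pigeonhole (Lemma \ref{M-k}) must conclude the stronger $\ell_1$-statement $\sum_{i=N}^{N+k}\|u_i\|<\ep$ on some window; this controls the window term by the triangle inequality alone, and its proof selects one $u_i$ of norm $\ge\ep/(k+1)$ from each window and uses their norming representatives $E(u_i)$ (disjointly and far-out supported in $E$, with $\|\sum E(u_i)\|_E\le\|\sum u_i\|_{\J(e_i)}\le 2K$ by suppression unconditionality of zero-sum blocks) to place $\ell_\infty^{M/k}$ in $\{E\}_{M/k}$. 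Second, the tail term is bounded by $4K+O(\ep)$ directly as in Proposition \ref{trivial-segments}: since $S(u_i)\approx 0$, any norming family of segments may be broken into trivial segments, each lying inside a single $[n_i^-,n_i^+]$, at the cost of a factor $2$, and the resulting trivial representative of $\sum_i h(n_1,\dots,n_i)$ has $E$-norm at most $K$ because it coincides with the corresponding trivial representative of an interlacing difference $\phi(\vec n)-\phi(\vec m')$, whose $\J(e_i)$-norm is at most $K$. With these two substitutions your concluding computation goes through verbatim; your treatment of the $\ell_p$ case is correct.
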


\begin{proof}
i) Suppose $c_0$ does not linearly embed into $\J(e_i)$. Then $\J(e_i)$ is quasi-reflexive of order one, and the result follows from Theorem \ref{quasireflexive}.

ii) This follows from a more general Theorem \ref{tree-space} proven in next section.

\end{proof}

We will show next that $\ell_1$ assumption in (i) of the above Corollary is not necessary. 

\begin{theorem}\label{J(e_i)-c_0}
Let $(e_i)$ be an unconditional basis for a Banach space $E$, and $\J(e_i)$ be the James space over $(e_i)$. Then $c_0$ coarsely embeds into $\J(e_i)$ if and only if $c_0$ linearly embeds into $\J(e_i)$.
\end{theorem}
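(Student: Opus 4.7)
The $(\Leftarrow)$ direction is immediate, since any linear embedding of $c_0$ into $\J(e_i)$ is, in particular, a coarse embedding. For the nontrivial $(\Rightarrow)$ direction I argue by contrapositive: assuming that $c_0$ does not linearly embed into $\J(e_i)$, I will show that $c_0$ does not coarsely embed either.

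By the recalled property (i), the hypothesis that $c_0$ fails to linearly embed into $\J(e_i)$ is equivalent to the basis $(u_i)$ being boundedly complete, so $\J(e_i)$ is a separable dual space. Invoking then recalled property (ii), we obtain $\J(e_i)^* = [u_i^*] + \R\cdot S$, where $S$ is the bounded summing functional. This yields a direct-sum decomposition $\J(e_i)^* = [u_i^*] \oplus Z^*$, with $Z^* = \R\cdot S$ when $S\notin [u_i^*]$, and $Z^* = \{0\}$ in the degenerate case $S\in [u_i^*]$. Either way, $Z^*$ is at most one-dimensional, hence finite-dimensional.

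With these observations in hand, the hypotheses of Theorem \ref{quasireflexive} are satisfied for $X = \J(e_i)$ with boundedly complete basis $(u_i)$ and the above $Z^*$. That theorem immediately yields that $c_0$ does not coarsely embed into $\J(e_i)$, completing the contrapositive.

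The conceptual content of this proof, and the reason the $\ell_1$-nonembedding assumption of Corollary \ref{gen-james-graphs}(i) is avoidable, is that Theorem \ref{quasireflexive} only needs the dual decomposition $\J(e_i)^* = [u_i^*] \oplus Z^*$ with $Z^*$ finite-dimensional, rather than the strictly stronger property of quasi-reflexivity of order one. Whereas Corollary \ref{gen-james-graphs}(i) obtained the latter through property (iii), which requires $\ell_1 \not\embed \J(e_i)$, here property (iii) is bypassed altogether, with (i) and (ii) alone supplying the structural input needed by Theorem \ref{quasireflexive}. Consequently there is no substantive obstacle in the argument; the theorem is an observation that the earlier corollary was proved with slightly overpowered hypotheses.
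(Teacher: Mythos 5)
There is a genuine gap, and it sits exactly at the point the theorem is designed to address. Your argument hinges on reading the recalled fact (ii), $\J(e_i)^*=[S\cup(u_i^*)_{i=1}^\infty]$, as holding with no hypotheses, and then concluding that $\J(e_i)^*=[u_i^*]\oplus Z^*$ with $\dim Z^*\le 1$ so that Theorem \ref{quasireflexive} applies. But that fact is not true for an arbitrary unconditional $(e_i)$. Take $(e_i)$ to be the unit vector basis of $\ell_1$: in the defining supremum, singleton intervals give $\sum_j|a_j|$ while the triangle inequality bounds every choice of intervals by $\sum_j|a_j|$, so $\J(e_i)$ is isometrically $\ell_1$, with $(u_i)$ its (boundedly complete) unit vector basis and $c_0\not\embed\ell_1$ — so this case falls squarely under the nontrivial direction of the theorem. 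Here $\J(e_i)^*=\ell_\infty$, whereas $[u_i^*]+\R S=c_0+\R\mathbf{1}$ is a closed subspace of infinite codimension; no finite-dimensional $Z^*$ exists and Theorem \ref{quasireflexive} is inapplicable. The version of (ii) proved in \cite{BHO} carries an implicit hypothesis (in effect that $\ell_1$ does not embed, equivalently a shrinkingness condition), which is precisely the assumption of Corollary \ref{gen-james-graphs}(i) that the present theorem sets out to remove — the paper flags this explicitly just before the statement. So the theorem cannot be dismissed as the earlier corollary having been proved with overpowered hypotheses; the $\ell_1$ assumption there is doing real work in securing the dual decomposition.

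The paper's proof therefore takes a different route that avoids any dual decomposition. It reruns the $\phi^A$ versus $\phi^B$ scheme of Theorem \ref{spreading}, using only that the summing functional $S$ is bounded on $\J(e_i)$ and that block sequences $(w_i)$ with $S(w_i)=0$ are suppression unconditional (Proposition 2.1 of \cite{BHO}). After stabilizing so that $S$ is essentially zero on the interlaced differences $h^A(\cdot)-h^B(\cdot)$, that block sequence is $2$-suppression unconditional, which yields $\rho(k)\le\|\phi^A(\vec n)-\phi^B(\vec n)\|\le 2\|\phi^A(\vec n)-\phi^A(\vec m)\|+2\|\phi^B(\vec m)-\phi^B(\vec n)\|\le 4K$, a contradiction for large $k$. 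If you wish to salvage your approach, you must first establish that $[u_i^*]$ has finite codimension in $\J(e_i)^*$, and that is available only under the extra hypothesis of Corollary \ref{gen-james-graphs}(i), not in the generality claimed here.
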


\begin{proof}
Suppose $c_0$ does not linearly embed into $\J(e_i)$. Then by \cite{BHO} the basis $(u_i)$ of $\J(e_i)$ is boundedly complete. Suppose $\phi
:c_0\to \J(e_i)$ is a coarse embedding. The proof is a slight variation of the proof of Theorem \ref{spreading} so we will only briefly indicate the required additional argument which additionally exploits the fact that the summing functional $S$ is bounded on $\J(e_i)$ and the block sequences $(w_i)$ with $S(w_i)=0$ are unconditional (See Proposition 2.1 of \cite{BHO}). Suppose we have the same set up as in the proof of Theorem \ref{spreading} up to equation (\ref{A-B-interlacing}). By (\ref{same sums}) the summing functional is essentially zero on the blocks of differences
\begin{align*}h^A(m_1)-h^B(m_1)<h^A(n_1)-h^B(n_1)<\ldots<h^A(m_1,\ldots,
m_k)-\\h^B(m_1,\ldots, m_k)<h^A(n_1, \ldots, n_k)-h^B(n_1, \ldots, n_k)
\end{align*} and therefore by Proposition 2.1 of \cite{BHO}, this block sequence is $2$-suppression unconditional (for small $\ep>0$ in (\ref{same sums})).

Thus
\begin{align*}
\rho(k)&\le\|\phi^A(\vec{n})-\phi^B(\vec{n})\|\le 2\|\phi^A(\vec{n})-\phi^A(\vec{m})+\phi^B(\vec{m})-\phi^B(\vec{n})\|\\
&\le 2\|\phi^A(\vec{n})-\phi^A(\vec{m})\|+2\|\phi^B(\vec{m})-\phi^B(\vec{n})\|\\
&\le 4K
\end{align*}
which is a contradiction for large $k$.

\end{proof}

\section{Non-embedding of the Kalton graphs into generalized James tree spaces}\label{Sec:JamesTree}

Recall that the James tree space $\jt$ is the space of real valued functions on the
binary tree $T=2^{<\omega}$ with norm
$$\|x\|=\sup\left(\sum_{j=1}^k \Big|\sum_{t\in S_j}x(t)\Big|^2 \right)^{1/2}$$
where the sup is taken over all sets of {\em disjoint segments} $(S_j)_{j=1}^k$. A segment $S$
is a finite interval of a branch in $T$. Note that the subspaces of functions
restricted to a single branch is isomorphic to the James space $\J$. Informally, the
James tree space is obtained by `hanging' $\J$ on every branch of the binary
tree. The node basis $(u_\alpha)_{\alpha\in T}$, when ordered in a natural way that is
compatible with the tree order, is a boundedly complete basis. The dual $\jt^*$
is non-separable: For every branch $b$, the  functional $S_b$ (summing over $b$) is bounded and has norm one, and for two distinct branches $b, b'$, $\|S_b-S_{b'}\|_{\jt^*}\ge \sqrt 2$.

The {\em generalized James tree spaces} are obtained by replacing the $\ell_2$-norm in the above by other norms. Let $(e_i)$ be a normalized basis for some Banach space $E$.  As in the original James tree space, in which case $(e_i)$ is the unit vector basis of $\ell_2$, the James tree space over $(e_i)$,  $\jt(e_i)$, is defined on the linear space of all finitely supported functions $x:T\to \R$ where  $T=2^{<\omega}$ is the full binary tree. As before $S=[\alpha, \beta]$ denotes segments which are interval subsets of branches in $T$, and $S(x)=\sum_{\gamma\in S}x(\gamma)$. We fix an ordering $o:T\to\N$ compatible with the tree order. If $S=[\alpha, \beta]$ then put $o(S)=o(\alpha)$. Then the norm on $\jt(e_i)$ is given by

$$\|x\|=\sup\Big\{\big\|\sum_{i=1}^k S_i(x)e_{o(S_i)}\big\|_{E}: k\in \N\ \text{and}\ (S_i)_{i=1}^k\ \text{are disjoint segments in}\ T\Big\}.$$
As with the generalized James spaces, we may assume without loss of generality that $(e_i)$ is 1-suppression unconditional as $\jt(e_i)$ is naturally isometric to $\jt(f_i)$ where $(f_i)$ is `unconditionalization' of $(e_i)$ (see \cite{BHO}).

The vectors $E(x):=\sum_{i=1}^k S_i(x)e_{o(S_i)}$ in $E$ are called the {\em representatives} of $x$.
The node basis $u_{\alpha}(\beta)=\delta_{\alpha,\beta}$ is a monotone basis in the ordering $o(\alpha)$. Segments $S$ are norm one linear functionals on $\jt(e_i)$. Similarly, the branch functionals $f_b(x)=\sum_{\gamma\in b}x(\gamma)$ where $b$ is an infinite branch are also norm one functionals.

 Bellenot, Odell, and Haydon \cite{BHO} proved that if we start with a space $E$ with a boundedly complete basis $(e_i)$ then the basis $(u_{\alpha})$ of $\jt(e_i)$ is boundedly complete as well. In this section, we will explore the following:

\begin{Question}
Let $(e_i)$ be a boundedly complete basis for some Banach space $E$. When do the Kalton interlacing graphs $(\N^k, d_{\K})$ equi-coarsely  embed into the James tree space $\jt(e_i)$?
\end{Question}

As noted in \cite{LPP}, if $(e_i)$ is the unit vector basis of dual Tsirelson space $T^*$ (which is boundedly complete), then the Jamesification $\J(e_i)$ has the summing basis $(s_i)$ of $c_0$ as a spreading model generated by the basis. Thus the family $(\N^k, d_{\K})$ equi-coarsely embed (in fact, equi-Lipschitz embed) into $\J(e_i)$. In particular, the same holds for the James tree space $\jt(e_i)$. We will show that this example is essentially the only exception. If $c_0$ is not (asymptotically) finitely block representable in $[e_i]$, that is, $\ell^n_{\infty}$'s do not belong to the asymptotic structure in the sense of \cite{MMT}, then the family $(\N^k, d_{\K})$ do not equi-coarsely embed into the James tree space $\jt(e_i)$.

\begin{theorem}\label{tree-space}
Let $E$ be a Banach space with an unconditional basis $(e_i)$ and let $\jt(e_i)$ be the James tree space over $(e_i)$. Suppose $\ell^n_{\infty}$'s do not belong to the asymptotic structure of $E$. Then the Kalton's interlacing graphs $(\N^k, d_{\K})$ do not equi-coarsely embed into $\jt(e_i)$.
\end{theorem}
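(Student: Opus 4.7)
The strategy is to argue by contradiction using the asymptotic linearization of Corollary~\ref{lin-Ramsey}, followed by a new branch-reduction step that shows an equi-coarse embedding into $\jt(e_i)$ can be localized to a bounded number of branches of the tree $T$. Once the problem is reduced to finitely many branches, one concludes via an argument paralleling Section~\ref{section-J} adapted to $\J(e_i)$. By Fact~\ref{rufact} and Lemma~\ref{comp} we may assume the existence of a Ramsey ultrafilter $\sU$. Suppose $\phi_k\colon (\N^k,d_{\K})\to \jt(e_i)$ is an equi-coarse embedding with uniform constants $K,\rho$. Fix $k$ and $\ep>0$; Corollary~\ref{lin-Ramsey} yields $H\in \sU$ so that for all $\vec n\in H^k$, with respect to the node basis $(u_\alpha)$ in its natural ordering,
$$\Big\|\phi_k(\vec n) - h_0 - \sum_{i=1}^k h(n_1,\ldots,n_i)\Big\| < \ep,$$
where the $h(n_1,\ldots,n_i)$ are block vectors with $\|h(n_1,\ldots,n_i)\|\le K+\ep$.

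The heart of the proof is the following branch-reduction claim: after shrinking $H$ inside $\sU$ if necessary, there is a finite family of branches $b_1,\ldots,b_N$ with $N=N(K,k,\ep)$ so that for all $\vec n\in H^k$, each $h(n_1,\ldots,n_i)$ is $\ep$-approximated by a vector whose support lies in $\bigcup_{j\le N}b_j$. The key observation is that if $u=h(n_1,\ldots,n_i)$, then $\|u\|\le K+\ep$, so for every choice of disjoint segments $S_1,\ldots,S_L$ the representative $\sum_{j} S_j(u)\,e_{o(S_j)}$ has $E$-norm at most $K+\ep$. If $u$ had significant mass on $L$ distinct branches, one could choose one fat segment of mass at least $\delta>0$ per branch and obtain a vector $\sum_{j=1}^L c_j e_{o(S_j)}$ in $E$ with $|c_j|\ge \delta$ and $o(S_j)$ forming an arbitrarily spread-out block sequence. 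Because $(e_i)$ is unconditional and $\ell_{\infty}^n\notin \{E\}_n$, the $E$-norm of such a vector grows without bound in $L$; hence $L$ is bounded in terms of $K,\delta,\ep$. A (tree-pruning) Ramsey stabilization of the branch pattern across varying $\vec n$ then collects the fat branches into a single finite list $b_1,\ldots,b_N$.

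With the branch reduction in hand, the restriction of $\jt(e_i)$ to the $N$-branch union embeds naturally into a direct sum of $N$ copies of $\J(e_i)$; on each branch the summing functional is bounded, and one obtains the $\ell_2$-type two-sided estimate of Lemma~\ref{J-blocks} (or its $\J(e_i)$-analogue coming from unconditionality of block sequences with zero summing functional). Unconditionality of $(e_i)$ in $E$ then combines these branch-by-branch estimates into a single two-sided bound for $\big\|\sum_i h(n_1,\ldots,n_i)-h(m_1,\ldots,m_i)\big\|$ in terms of the $\ell_2$-sum of the norms of the pieces. Stabilizing the norms $\|h(n_1,\ldots,n_i)\|$ to values $a_i$ via $\sU$ and invoking the pigeonhole Lemma~\ref{pigeonhole}, one finds an interval $[N_0,N_0+k]$ on which the $\ell_2$-mass of the $a_i$ is small. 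Choosing $\vec n,\vec m$ in the homogeneous set that agree outside $[N_0,N_0+k]$ and interlace inside, exactly as in the proof of Theorem~\ref{J}, then forces $\|\phi_k(\vec n)-\phi_k(\vec m)\|$ to be small while $d_{\K}(\vec n,\vec m)=k$, contradicting the lower bound $\rho(k)\to\infty$ for $k$ large.

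The main obstacle is the finite branch reduction. The heuristic above is clean, but extracting a uniform bound $N$ rigorously requires a quantitative version of the no-$\ell_\infty^n$ hypothesis together with an iterated stabilization procedure that tracks segment placements coherently across all levels $i=1,\ldots,k$ of the decomposition and across the varying tuples $\vec n$. A naive application of Ramsey risks producing an $N$ that grows with $\vec n$, since the set of branches of $T$ is uncountable and the tree structure must be navigated by a pruning argument rather than a direct partition. This is precisely the reduction step that the authors identify in the introduction as of independent interest.
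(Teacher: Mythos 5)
Your high-level outline (linearize, reduce to finitely many branches, pigeonhole, interlace) matches the paper's architecture, but the step you yourself flag as the main obstacle --- the finite branch reduction --- is where the proposal breaks down, and the mechanism you propose for it does not work. You argue that a single block $u=h(n_1,\ldots,n_i)$ of norm $\le K+\ep$ cannot carry segments of mass $\ge\delta$ on $L$ branches, because the resulting vector $\sum_j c_j e_{o(S_j)}$ would be an ``arbitrarily spread-out block sequence'' contradicting $\ell_\infty^n\notin\{E\}_n$. But $u$ is a fixed, finitely supported vector, so the indices $o(S_j)$ are fixed and need not be far out along $(e_i)$; the asymptotic-structure hypothesis constrains only sequences whose supports can be pushed out in the iterated tail-subspace game, and says nothing about a single vector. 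Even granting a per-vector bound on $\delta$-fat branches, the branches depend on $\vec n$, the set of branches of $T$ is uncountable, and you explicitly leave unresolved the uniformization to a \emph{fixed} finite family --- which is exactly the nontrivial content. The paper's reduction (Section \ref{section-reduction}) uses no asymptotic hypothesis at all: it takes the supremum $\rho_0$ of the norm computable on finitely many infinite branch segments rooted below a fixed level, stabilizes the initial segments of the extremal branches by Ramsey to get a fixed $\sF^*$ (Lemma \ref{ml2}), and then --- the key point --- exploits that for interlaced $\vec n,\vec m$ the residual branch families $\sF_i(\vec n)$ and $\sF_i(\vec m)$ are node disjoint, so that Lemma \ref{ia1} yields $\|\phi(\vec n)\|_{T\sm\cup\sF^*}\le K+1$. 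Note this is a bound by roughly the Lipschitz constant, not by $\ep$: the statement you aim for (each block $\ep$-supported on finitely many branches) is both stronger than what is proved and stronger than what is needed.

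The concluding step also does not go through as written. The restriction of $\jt(e_i)$ to $\cup\sF^*$ is not a direct sum of copies of $\J(e_i)$ (branches share initial segments and are coupled through the $E$-norm), and for general $E$ there is no two-sided $\ell_2$ estimate analogous to Lemma \ref{J-blocks}; Lemma \ref{pigeonhole} is specific to the $\ell_2$ case. The paper needs only a one-sided bound on interlaced differences restricted to $\sF^*$ (Proposition \ref{trivial-segments}, giving $4K+2\ep$ via stabilized summing functionals along the finitely many branches and the trivial-segment decomposition), and replaces the $\ell_2$ pigeonhole by Lemma \ref{M-k}. That lemma is where the hypothesis $\ell_\infty^n\notin\{E\}_n$ is actually used, and legitimately so: the block differences across the levels $i=1,\ldots,M$ form an asymptotic sequence whose norming representatives in $E$ genuinely have arbitrarily spread-out supports.
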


In particular, we have

\begin{corollary}\label{JT}
The Kalton's interlacing graphs $(\N^k, d_{\K})$ do not equi-coarsely embed into the James tree space $\jt$.
\end{corollary}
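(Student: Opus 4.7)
My plan is to derive Corollary~\ref{JT} as a direct specialization of Theorem~\ref{tree-space}, since the classical James tree space $\jt$ is precisely $\jt(e_i)$ where $E = \ell_2$ and $(e_i)$ is its unit vector basis. I would only need to verify the two hypotheses of Theorem~\ref{tree-space} for this choice of $E$ and $(e_i)$, and then invoke that theorem.

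The unconditionality hypothesis is immediate since the unit vector basis of $\ell_2$ is $1$-unconditional. The second hypothesis, that $\ell_\infty^n$ does not belong to the asymptotic structure of $\ell_2$, is a short Hilbert-space verification: by Parseval's identity, every normalized block sequence of length $n$ of the unit vector basis of $\ell_2$ is itself isometric to the unit vector basis of $\ell_2^n$, so regardless of which tail subspaces the subspace player selects in the asymptotic game, the vector player's normalized responses always produce a sequence $1$-equivalent to $(e_i)_{i=1}^n$. Hence $\{\ell_2\}_n$ consists, up to isometry, only of $\ell_2^n$. Since $\|\sum_{i=1}^n e_i\|_{\ell_2} = \sqrt{n}$ while the same sum in $\ell_\infty^n$ has norm $1$, the two bases fail to be $(1+\varepsilon)$-equivalent for any fixed $\varepsilon$ as soon as $n$ is large, so $\ell_\infty^n \notin \{\ell_2\}_n$ for all sufficiently large $n$.

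With both hypotheses in place, Theorem~\ref{tree-space} immediately yields that the Kalton interlacing graphs $(\N^k, d_{\K})$ do not equi-coarsely embed into $\jt$. The main obstacle is not in this deduction itself but is absorbed into Theorem~\ref{tree-space}, whose substantive content is the branch-reduction argument advertised in the introduction: any equi-coarse embedding of the Kalton graphs into a generalized James tree space must be essentially supported on a finite set of branches, at which point one reduces to the single-branch James-space situation already handled by Theorem~\ref{J} and its $\ell_2$-style pigeonhole estimate. That reduction is where I would expect the genuine technical difficulty to live, but it is exactly what Theorem~\ref{tree-space} packages away, leaving only the clean hypothesis check above for Corollary~\ref{JT}.
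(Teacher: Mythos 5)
Your proposal is correct and matches the paper exactly: the paper derives Corollary~\ref{JT} as an immediate specialization of Theorem~\ref{tree-space} to $E=\ell_2$ (introduced with "In particular, we have"), and your verification that the unit vector basis of $\ell_2$ is $1$-unconditional and that $\ell_\infty^n\notin\{\ell_2\}_n$ (since every normalized block sequence in $\ell_2$ is isometric to the unit vector basis of $\ell_2^n$, whose norm of $\sum_{i=1}^n e_i$ is $\sqrt{n}$) is precisely the hypothesis check needed. Your attribution of the real work to the branch-reduction argument inside Theorem~\ref{tree-space} is also accurate.
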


The proof is a non-trivial generalization of the argument given for the James space in Section \ref{section-J}. The non-trivial part of the generalization is to reduce the embeddings into a finite set of branches, which is given in the subsection below. The reduction is done in a more general setting assuming only that the node basis of $\jt(e_i)$ is boundedly complete.

Suppose that we have equi-coarse embeddings $$\phi_k:(\N^k,
d_{\K})\to \jt(e_i),\ \ k\in\N.$$ We may assume there are a constant $K$ and non-decreasing
function $\rho$ with $\lim_{t\to\infty}\rho(t)=\infty$ so that for all $\vec{n},
\vec{m}\in \N^k$ and $k\in\N$ we have 

\begin{equation}\label{rho}
\rho(d_{\K}(\vec{n}, \vec{m}))\le
\|\phi_k(\vec{n})-\phi_k(\vec{m})\|\le K d_{\K}(\vec{n}, \vec{m}).
\end{equation}

We fix a large $k$ (to be determined at the end of the proof) and drop subscript from $\phi_k$ and write $\phi$ for brevity. As in previous sections, we make some simplifying assumptions on $\phi$. Given a Ramsey ultrafilter $\sU$, by Theorem \ref{linearization} and Remark \ref{homog-tree} there exists $H\in\sU$ so that for all $\vec{n}\in H^k$, $\phi(\vec{n})$ can be written, ignoring tiny approximations, as a sum of blocks vectors

\begin{equation}\label{form}\phi(\vec n)=h_0+\sum_{i=1}^k h(n_1,\ldots, n_i)\end{equation} where blocks $h(n_1,\ldots, n_i)$ have successive support with respect to the node basis of $\jt(e_i)$. We may assume that the support of $h_0$ is contained in $[0,0^+]$, and the supports of $h(n_1,\ldots, n_i)$ are contained in $[n^-_i, n^+_i]$ where the intervals are {\em intervals of levels of the tree} (rather than basis intervals), and $n^-_i$ is the immediate predecessor of $n_i$ and $n^+_i$ is the immediate successor of $n_i$ in $H$, and $0^+$ is the first element of $H$ (Lemma \ref{se} below).

\subsection{Reduction of embeddings to a finite set of branches}\label{section-reduction}


If $\sS$ is a set of branch segments (which are always taken to be pairwise
node disjoint), then  we let $\|\phi(\vec n)\|_{\sS}$
denote the approximation to $\| \phi(\vec n)\|$ computed using the branch segments in $\sS$, that is, 
$$\|\phi(\vec n)\|_{\sS}=\Big\|\sum_{S\in \sS}S(\phi(\vec n))e_{o(S)}\Big\|_E.$$



More generally, if $F$ is a set of nodes in the tree, we let $\|\phi(\vec n)\|_{F}$
denote the norm computed using sets of branch segments $\sS$ which {\em respect $F$}, that is, 
each $S \in \sS$ is a subset of $F$. Thus
$$\|\phi(\vec n)\|_{F}=\sup_{\sS}\Big\|\sum_{S\in \sS}S(\phi(\vec n))e_{o(S)}\Big\|_E$$
where the sup is over all disjoint collections $\sS$ that respect $F$.


The following is immediate from Theorem \ref{linearization} as mentioned above.

\begin{lemma} \label{se}
Let $\eta>0$. 
There is an infinite $H\in \sU$ such that for all $\vec n \in H^k$ there exists $\sS=\sS(\vec n,\eta)$ whose segments start and end at nodes 
of length in $[0,0^+]\cup \bigcup_i [n_i^-,n_i^+]$ (where $n_i^-$ and $n_i^+$
refer to the set $H$) such that $\|\phi(\vec n)\|_{\sS} > \| \phi(\vec n)\|-\eta$. 
\end{lemma}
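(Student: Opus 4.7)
My plan is to derive the lemma directly from Theorem~\ref{linearization} by exploiting the structured support of the block decomposition of $\phi(\vec n)$. First, apply Theorem~\ref{linearization} with error tolerance $\eta/3$ via a Ramsey ultrafilter $\sU$ (recall we have fixed one by Fact~\ref{rufact}). We obtain a set $H\in\sU$ so that for every $\vec n\in H^k$,
\[
\phi(\vec n) = h_0 + \sum_{i=1}^{k} h(n_1,\ldots,n_i) + e_{\vec n},\qquad \|e_{\vec n}\|_{\jt(e_i)} < \eta/3,
\]
where the block vectors have supports in successive basis-index intervals. Because the ordering $o$ is compatible with the tree order, by choosing the cut-off parameters $l_i$ in Theorem~\ref{linearization} along level boundaries in $H$, we arrange the support of $h_0$ to sit at nodes of level in $[0,0^+]$ and the support of $h(n_1,\ldots,n_i)$ at nodes of level in $[n_i^-,n_i^+]$. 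Set $v=h_0+\sum_i h(n_1,\ldots,n_i)$, so $\|v\| \ge \|\phi(\vec n)\|-\eta/3$.

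Next, from the definition of the $\jt(e_i)$-norm as a supremum over disjoint segment families, pick $\sS^*$ with $\|v\|_{\sS^*}>\|v\|-\eta/3$. For each $S=[\alpha,\beta]\in\sS^*$, shrink or extend the endpoints along their branch to the nearest node whose level lies in $[0,0^+]\cup\bigcup_i[n_i^-,n_i^+]$; drop any segment whose intersection with $\supp(v)$ becomes empty, using $1$-suppression unconditionality of $(e_i)$. Because $v$ vanishes at nodes of gap level, the modification preserves each coefficient $\tilde S(v)=S(v)$, and the modified segments remain pairwise disjoint. The resulting family $\sS$ respects the level constraint of the lemma, and a short estimate shows $\|v\|_{\sS}\geq \|v\|_{\sS^*}-\eta/3$.

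Combining: since $S(\phi(\vec n))=S(v)+S(e_{\vec n})$ for every segment $S$ and $\|\sum_{S\in\sS}S(e_{\vec n})e_{o(S)}\|_E\leq \|e_{\vec n}\|_{\jt(e_i)}$ directly from the definition of the $\jt(e_i)$-norm, the triangle inequality gives
\[
\|\phi(\vec n)\|_{\sS}\geq \|v\|_{\sS} - \|e_{\vec n}\|_{\jt(e_i)}>\|v\|-\tfrac{2\eta}{3}>\|\phi(\vec n)\|-\eta,
\]
as required. The main obstacle lies in the endpoint-shift step: moving $\alpha$ changes $o(S)=o(\alpha)$, thereby placing the coefficient $S(v)$ on a different basis vector of $E$. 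The plan is to argue that this shift is confined to the narrow $o$-window corresponding to the single gap containing $\alpha$, so that by unconditionality of $(e_i)$ and the fact that distinct modified segments still produce distinct $o$-indices, the $E$-norm of the representative is changed by at most $\eta/3$ in total.
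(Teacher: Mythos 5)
The paper offers no written proof of Lemma~\ref{se} (it is declared ``immediate from Theorem~\ref{linearization}''), and your overall route --- linearize $\phi(\vec n)$ via Theorem~\ref{linearization} so that the blocks live in the level windows, take a near-norming disjoint family for the linearized vector $v$, trim each segment into the windows, and absorb the error term $e_{\vec n}$ using $\|\cdot\|_{\sS}\le\|\cdot\|$ --- is surely the intended one. The handling of the error term and the observation that trimming preserves each coefficient $\tilde S(v)=S(v)$ and preserves pairwise disjointness are all correct.

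However, the step you yourself flag as ``the main obstacle'' is a genuine gap, and the resolution you sketch does not work. Trimming $S=[\alpha,\beta]$ to $\tilde S=[\tilde\alpha,\tilde\beta]$ replaces the basis vector $e_{o(\alpha)}$ by $e_{o(\tilde\alpha)}$ in the representative $\sum_{S}S(v)e_{o(S)}$, i.e.\ it \emph{relocates} the coefficient $S(v)$ to a different coordinate of $E$. Unconditionality of $(e_i)$ controls sign changes and suppression of coordinates; it says nothing about moving a coefficient from $e_{o(\alpha)}$ to $e_{o(\tilde\alpha)}$, and for a non-spreading unconditional basis such a relocation can change the $E$-norm by a constant factor (e.g.\ if $(e_i)$ behaves like $\ell_1$ on one block of indices and like $c_0$ on another). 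Nor is the shift ``confined to a narrow $o$-window'': the level gap $(n_j^+,n_{j+1}^-)$ contains exponentially many nodes of the binary tree, so $o(\tilde\alpha)-o(\alpha)$ can be arbitrarily large --- but even a shift by one index is not controlled by unconditionality alone. As written, your argument establishes only $\|v\|_{\sS}\le\|v\|$, not the needed lower bound $\|v\|_{\sS}\ge\|v\|_{\sS^*}-\eta/3$. In the classical case $E=\ell_2$ (or any symmetric/subsymmetric basis) the relocation is harmless and the lemma really is immediate; but Theorem~\ref{mls} and Theorem~\ref{tree-space} are stated for a general unconditional $(e_i)$, so this step requires a genuine argument --- for instance, an additional stabilization showing that near-norming families can be chosen with starting nodes already in the windows, or a reformulation of the lemma so that only the node sets of the segments (not their $o$-indices) are used downstream. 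A sentence beginning ``The plan is to argue\dots'' is where the proof still has to happen.
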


We henceforth assume that the function $\sS(\vec n,\eta)$ and the set $H$ have the property
as stated in Lemma~\ref{se}. That is, we may assume 
that all segments $S \in \sS(\vec n,\eta)$ start and end in one of the $[n_i^-,n_i^+]$
(along with $[0,0^+]$). 

\begin{definition} Let $\sU$ be a Ramsey ultrafilter. Let $\rho_0$ be the supremum of all real numbers $r$ such that there is an $H\in \sU$
and a function $\sF_0$ such that $\sF_0(\vec n)$ is a finite set of disjoint infinite branch segments  
all of which begin at a node of length in $[0,0^+]$ such that for all $\vec n\in H^k$,
$$\|\phi(\vec n)\|_{\sF_0(\vec n)} \geq r.$$ 


We say a segment $s$ {\em diverges} from an infinite branch segment $b$ at level $n$
if the backward extensions of $s$ and $b$ diverge in the tree $T$ at a node of height $n$.

\end{definition}

For $\eta>0$, we let $\sF_0^\eta$ be a function such that there is a homogeneous $H\in\sU$ 
such that for all $\vec n \in H^k$, $\sF_0^\eta(\vec n)$ is a disjoint set of infinite branch segments 
starting below $0^+$ (defined relative to $H$), with $\| \varphi(\vec n)\|_{\sF_0^\eta(\vec n)} >\rho_0-\eta$. 
When the $\eta$ is fixed and there is no danger of confusion, we will write $\sF_0(\vec n)$
for $\sF_0^\eta(\vec n)$.

\begin{lemma} \label{ml1}
Let $\eta>0$. Then there is a homogeneous $H\in\sU$ such that for all $\vec n\in H^k$
we have $\| \phi(\vec n) \|_C< 2 \eta$, where $C$ is the union of the nodes which diverge 
from $\sF_0(\vec n)$ below $n_1^-$.
\end{lemma}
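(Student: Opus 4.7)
We argue by contradiction. Assume the conclusion fails, so that the set $A=\{\vec n : \|\phi(\vec n)\|_C \geq 2\eta\}$ is $\sU$-large in the iterated tree sense of Theorem~\ref{linearization}. By the Ramsey property of $\sU$, pass to a $\sU$-homogeneous $H\in \sU$ on which $\|\phi(\vec n)\|_C \geq 2\eta$ holds for every $\vec n \in H^k$. For each such $\vec n$, fix a finite disjoint collection $\sS'(\vec n)$ of tree segments contained in $C$ with $\|\phi(\vec n)\|_{\sS'(\vec n)} > 2\eta - \eta/4$. By definition of $C$, every node of every $S \in \sS'(\vec n)$ lies off $\sF_0(\vec n)$, so $\sF_0(\vec n)$ and $\sS'(\vec n)$ are automatically node-disjoint.

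The strategy is to build a new function $\sG$ on a further $\sU$-homogeneous $H' \in \sU$ so that $\sG(\vec n)$ is a finite disjoint family of infinite branch segments all beginning at a node of length in $[0,0^+]$ with $\|\phi(\vec n)\|_{\sG(\vec n)} > \rho_0$ for every $\vec n \in H'^k$, directly contradicting the supremum definition of $\rho_0$. For each $S \in \sS'(\vec n)$, let $\alpha_S$ denote its lower endpoint and let $m_S < n_1^-$ be the level at which the path from the root through $\alpha_S$ diverges from $\sF_0(\vec n)$; let $b_{j(S)} \in \sF_0(\vec n)$ be the branch this path follows below level $m_S$. Define $\tilde S$ to be the infinite branch that agrees with $b_{j(S)}$ up through level $m_S$, passes through $\alpha_S$ and $S$, and continues upward to infinity. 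By construction each $\tilde S$ begins at the same low node as $b_{j(S)}$, hence at a node of length in $[0,0^+]$.

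A second application of the Ramsey property of $\sU$ stabilizes the combinatorial data of $\sS'(\vec n)$ — its cardinality, the assignment $S \mapsto j(S)$, and the divergence levels $m_S$ — on some $H' \subseteq H$. On $H'$ one then defines $\sG(\vec n)$ by iterated surgery on $\sF_0(\vec n)$: for each $b_j$ attached by some $S$, split $b_j$ at the relevant divergence levels and install the corresponding $\tilde S$'s in place of the portions of $b_j$ that get swapped out, keeping the result a disjoint family of infinite branch segments all starting at height $\leq 0^+$. Using the $1$-suppression unconditionality of $(e_i)$ in $E$ and the fact that the new $\tilde S$-contributions supply disjoint-support coordinates of $E$-mass at least $\|\phi(\vec n)\|_{\sS'(\vec n)} > 2\eta - \eta/4$, one deduces $\|\phi(\vec n)\|_{\sG(\vec n)} \geq \|\phi(\vec n)\|_{\sF_0(\vec n)} + \delta > \rho_0$ for a fixed $\delta>0$, yielding the contradiction.

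\textbf{Main obstacle.} The principal difficulty is to organize the surgery so that the resulting family $\sG(\vec n)$ genuinely consists of disjoint infinite branch segments, all beginning at a node of length in $[0,0^+]$, even when several $S \in \sS'(\vec n)$ attach to the same branch $b_j \in \sF_0(\vec n)$ at different divergence levels; simultaneously the resulting $E$-norm gain must be controlled uniformly in $\vec n \in H'^k$, which forces careful use of suppression unconditionality of $(e_i)$ together with the uniform lower bound $2\eta$ on the $C$-mass surviving the Ramsey stabilizations.
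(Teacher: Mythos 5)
Your plan for the region of $C$ lying strictly between levels $0^+$ and $n_1^-$ does not work, and the quantitative step at the end is unjustified; the paper's proof is structured precisely to avoid both problems. First, the fatal quantitative gap: you claim that installing node-disjoint segments carrying $E$-mass $2\eta-\eta/4$ yields $\|\phi(\vec n)\|_{\sG(\vec n)}\geq \|\phi(\vec n)\|_{\sF_0(\vec n)}+\delta$. But the $\jt(e_i)$-norm over a union of two node-disjoint families of segments is computed through $\|\cdot\|_E$, and $1$-suppression unconditionality of $(e_i)$ only gives the \emph{maximum} of the two contributions, never their sum; in this section the only hypothesis on $E$ is that the node basis of $\jt(e_i)$ is boundedly complete, so on the relevant coordinates $E$ may behave like $\ell_\infty^n$ and the union may gain nothing. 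Without a uniform gain exceeding $\eta$ on a homogeneous set you cannot beat the supremum defining $\rho_0$, since $\sF_0^\eta$ only guarantees $\rho_0-\eta$. Second, the surgery itself fails for a segment $S\subseteq C$ diverging from $b_{j(S)}$ at a level $m_S$ with $0^+<m_S<n_1^-$: its backward extension down to a node of length in $[0,0^+]$ must share every node of $b_{j(S)}$ below $m_S$, so you either violate node-disjointness or you truncate $b_{j(S)}$ at $m_S$, in which case the remaining piece of $b_{j(S)}$ no longer begins in $[0,0^+]$ and is not an admissible member of a competitor family for $\rho_0$ (and you have also discarded part of the mass you meant to keep).

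The paper sidesteps both issues by splitting $C=A\cup B$. For $B$ (nodes diverging from $\sF_0^\eta(\vec n)$ below $0^+$) the backward extensions genuinely begin in $[0,0^+]$ and are node-disjoint from $\sF_0^\eta(\vec n)$, so the near-maximality of $\sF_0^\eta$ applies; this is the only place your augmentation idea is in the right spirit. For $A$ (nodes diverging between $0^+$ and $n_1^-$) the paper does not compete with $\rho_0$ at all: it partitions $(k+2)$-tuples $a<b<n_1<\cdots<n_k$ according to whether the nodes diverging from $\sF_0^\eta(\vec n)$ between levels $a$ and $b$ carry mass at least $\eta$, and kills the bad homogeneous side by stacking $t$ pairwise node-disjoint windows $[a_j,b_j]$ below $\vec n$; bounded completeness of $\|\cdot\|_E$ then forces $\|\phi(\vec n)\|\to\infty$ as $t\to\infty$, contradicting $\|\phi(\vec n)\|\leq kK$. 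You would need to replace your surgery on the middle region by a stacking argument of this kind.
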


\begin{proof}
Consider the partition of $k+2$ tuples $a<b<n_1<\cdots < n_k$ according to whether 
$\| \phi(\vec n)\|_A <\eta$, where $A$ is the set of nodes in $T$ which diverge 
from $\sF^\eta_0(\vec n)$ between $a$ and $b$. By the boundedly completeness of the norm, on the 
homogeneous side the stated property holds. Indeed, suppose to the contrary that there is a homogeneous set $H\in\sU$ on which the property fails. Then consider $a_1<b_1<\ldots<a_t<b_t<n_1<\ldots<n_k$ in $H$.  For each $a_j<b_j<n_1<\ldots<n_k$ for $1\le j\le t$, we have $\| \phi(\vec n)\|_{A_j} \ge\eta$, where $A_j$ is the set of nodes in $T$ which diverge 
from $\sF^\eta_0(\vec n)$ between $a_j$ and $b_j$. Since $A_j$'s consist of pairwise disjoint sets of branches, it follows that $\| \phi(\vec n)\|\ge \big\|\sum_{i=1}^t\sum_{S\in A_i}S(\phi(\vec n))e_{o(S)}\big\|_E$, which tends to infinity as $t$ gets larger by the boundedly completeness of the norm $\|\cdot\|_E$. Thus, this is contradiction for a large enough $t$ since $\| \phi(\vec n)\|$ is bounded by $kK$ by (\ref{rho}). 
We will use this type of argument often and will refer to it as a {\em boundedly completeness argument}.

Therefore, the stated property holds. Fix $H\in\sU$ homogeneous for the partition. 
Let $0^+$ denote the least element of $H$. Then by homogeneity, for almost all $\vec n$ we have 
$\| \phi(\vec n)\|_A <\eta$, where $A$ is the collection of nodes which diverge from 
$\sF^\eta_0(\vec n)$ between $0^+$ and $n_1^-$. On the other hand, by the definition of $\sF^\eta_0$
we have that for almost all $\vec n$ that $\| \phi(\vec n)\|_B<\eta$, where $B$ 
is the set of nodes which diverge from $\sF^\eta_0(\vec n)$ below $0^+$. Thus, for almost all $\vec n$,
if $C=A\cup B$ then $\| \phi(\vec n)\|_C <2\eta$. 
\end{proof}

Fix now a small $\eta$, and let $H$ be a homogeneous set as in Lemma~\ref{ml1}. 
Thus, for almost all $\vec n$ we have that  $\| \phi(\vec n) \|_C< 2 \eta$, 
where $C$ is the union of the nodes which diverge from $\sF_0(\vec n)$ below $n_1^-$.
Recall $\sF_0(\vec n)$ is a finite set of infinite branch segments, all of which start 
below a fixed level $0^+$ of $T$. By the finite additivity of the ultrafilter, we may assume that
the size of $\sF_0(\vec n)$ does not depend on $\vec n$. We may order these branches 
lexicographically, and enumerate them as $\sF_0(\vec n)= b^0_0(\vec n),\dots, b^0_{p_0}(\vec n)$. 
These branches depend on $\vec n$, but we will sometimes just call them $b^0_0,\dots, b^0_{p_0}$. 
Note that if $s_i= b^0_i(\vec n)\res 0^+$, then for almost all $\vec n$ the $s_i$ do not depend on 
$\vec n$, and the $s_0,\dots, s_{p_0}$ are distinct (that is, $b^0_i$ and $b^0_j$ 
for $i \neq j$ split before $0^+$ by the node disjointness of the $b^0_i$'s). 
The sequences $s_0,\dots, s_{p_0}$ are henceforth fixed.

\begin{lemma} \label{ml2}
For each $0 \leq i \leq p_0$ there is an infinite branch segment $b^*_i$ starting below $0^+$
such that for almost all $\vec n$ we have that $b^0_i(\vec n)\res n_1^-= b_i^* \res n_1^-$.
\end{lemma}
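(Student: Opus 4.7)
The plan is to fix an index $i\in\{0,\dots,p_0\}$, stabilize the initial segments of the branches $b^0_i(\vec n)$ level by level, and then combine these stabilizations using that a Ramsey ultrafilter is selective (hence a $P$-point). The argument is repeated for each of the finitely many $i$'s, and the resulting elements of $\sU$ are intersected to produce a single set in $\sU$ witnessing the lemma for all branches simultaneously.

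For each level $\ell\in\N$, define on $[\N]^k$ the finite-valued map
\[
f_\ell(\vec n)\;=\;\begin{cases} b^0_i(\vec n)\res \ell & \text{if } \vec n\in[H]^k \text{ and } n_1^->\ell,\\ \star & \text{otherwise,}\end{cases}
\]
whose range consists of the $2^\ell$ nodes at level $\ell$ of $T$ together with a default symbol $\star$. By the Ramsey property of $\sU$ there is $H_\ell\in\sU$ with $H_\ell\subseteq H$ on which $f_\ell$ is constant. Because $H_\ell$ is infinite, infinitely many $\vec n\in[H_\ell]^k$ satisfy $n_1^->\ell$, so the constant must be a genuine node $c_\ell$ at level $\ell$ of $T$. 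Choosing a common witness in $[H_\ell\cap H_{\ell'}]^k$ with $n_1^->\ell'$ gives $c_{\ell'}\res\ell = c_\ell$ whenever $\ell<\ell'$, so the sequence $(c_\ell)_{\ell\in\N}$ assembles into a single infinite branch $b^*_i$, which starts below $0^+$ and whose restriction to $0^+$ equals the previously fixed $s_i$.

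To uniformize across all levels I use that $\sU$ is a $P$-point to pick $H'\in\sU$ with $H'\subseteq^* H_\ell$ for every $\ell$, and then thin $H'$ diagonally to $H''\in\sU$: writing the increasing enumeration of $H''$ as $m_0<m_1<\cdots$, I require inductively that $m_{j+1}>\max(H'\sm H_\ell)$ for every $\ell\le m_j$, which is possible since each $H'\sm H_\ell$ is finite. For any $\vec n\in[H'']^k$ and any $\ell\le n_1^-$ (computed relative to the new ambient set $H''$), the diagonal condition forces $n_1>\max(H'\sm H_\ell)$, so $n_1,\dots,n_k\in H_\ell$ and therefore $b^0_i(\vec n)\res\ell = c_\ell = b^*_i\res\ell$. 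In particular $b^0_i(\vec n)\res n_1^- = b^*_i\res n_1^-$, as required. The main obstacle is that $n_1^-$ depends on $\vec n$, so no single finite-color Ramsey partition of $[\N]^k$ can directly stabilize the map $\vec n\mapsto b^0_i(\vec n)\res n_1^-$; instead one must stabilize every level independently and then uniformize across levels, which is precisely where the $P$-point/selective character of the Ramsey ultrafilter $\sU$ is essential.
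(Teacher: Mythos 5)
Your overall strategy is sound and genuinely different from the paper's, but there is one step that, as written, does not work: the passage from $H'$ to $H''$. You construct $H''$ by inductively hand-picking elements $m_0<m_1<\cdots$ of $H'$ subject to the growth condition $m_{j+1}>\max(H'\sm H_\ell)$ for $\ell\le m_j$. That produces \emph{some} infinite subset of $H'$, but an arbitrary infinite subset of a set in $\sU$ need not belong to $\sU$, and nothing in your argument certifies $H''\in\sU$. The statement you need is true, but it requires one more application of the ultrafilter's combinatorial strength rather than manual thinning: for instance, set $f(a)=\max\{\max(H'\sm H_\ell):\ell\le a\}$ (finite, since each $H'\sm H_\ell$ is finite) and $2$-color pairs $\{a<a'\}$ from $H'$ according to whether $a'>f(a)$; a homogeneous set in $\sU$ cannot lie on the negative side (fixing $a$ would bound the rest of the set by $f(a)$), so the homogeneous set is a legitimate $H''\in\sU$ satisfying your diagonal condition. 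Equivalently, you could invoke the standard diagonalization property of selective ultrafilters. With that repair the rest of your argument goes through: the level-by-level stabilizations $f_\ell\equiv c_\ell$ on $H_\ell$ cohere into a branch $b^*_i$, and for $\vec n\in [H'']^k$ and $\ell\le n_1^-$ you correctly land in $[H_\ell]^k$, where homogeneity forces $b^0_i(\vec n)\res\ell=c_\ell$.

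For comparison, the paper avoids the $P$-point/diagonalization machinery entirely. It colors $(2k+1)$-tuples $a<n_1<\cdots<n_k<n'_1<\cdots<n'_k$ by whether $b^0_i(\vec n)\res a=b^0_i(\vec n')\res a$, and rules out homogeneity on the ``unequal'' side by a pigeonhole at the fixed level $a_0$ (only finitely many nodes at that level, so among many tuples above $a_0$ two must agree there). Homogeneity on the ``equal'' side immediately yields the stabilized branch as $b^*_i=\bigcup_{\vec n}b^0_i(\vec n)\res n_1^-$. That single partition argument is shorter and uses only the Ramsey partition property itself; your approach trades this for a level-indexed family of partitions plus a pseudo-intersection and diagonalization, which is a perfectly legitimate (and perhaps more transparent) route once the membership of $H''$ in $\sU$ is properly justified.
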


\begin{proof}
Consider the partition of $2k+1$ tuples $a <n_1<\cdots <n_k< n'_1<\cdots 
< n'_k$ according to whether $b^0_i(\vec n) \res a= b^0_i(\vec n') \res a$. Suppose that on the homogeneous 
side the stated property does not hold, and let $H\in\sU$ be homogeneous for the contrary side. 
Fix $a_0 \in H$, and consider $l$-tuples $\vec n^0< \vec n^1<\dots <\vec n^l$ coming from $H^k$, all above $a_0$. 
Since $a_0$ is fixed, for a large enough $l$ (by pigeonhole) there must be $c<d$ such that $b^0_i(\vec n^c)\res a_0= b^0_i(\vec n^d)\res a_0$. 
But then $\{ a_0\} \cup \vec n^c \cup \vec n^d$ violates the homogeneity of $H$.
So, on the homogeneous side we have that if $\vec n< \vec n'$ then 
$b^0_i(\vec n)\res n_1^-= b^0_i(\vec n') \res n_1^-$. It then follows easily that for any two 
tuples $\vec n$, $\vec n'$ from $H$ with $n_1= \min \{ n_i,n'_i\}$ that 
$b^0_i(\vec n)\res n_1^-= b^0_i(\vec n') \res n_1^-$. We let $b^*_i =\bigcup_{\vec n} b^0_i(\vec n) \res n_1^-$
which is then well-defined. 

\end{proof}

Let $\sF^*=\{ b^*_i \colon 0\leq i \leq p_0\}$. We note that $\sF^*$ is a fixed (independent of $\vec n)$
set of infinite branch segments of size $p_0$, with each $b^*_i$ extending $s_i$. In particular, 
all of the pairs of distinct branches from $\sF^*$ split below a fixed level $0^+$ of $T$.

\begin{lemma} \label{ml3}
For almost all $\vec n$ we have $\| \phi(\vec n)\|_{A} <\eta$
where $A$ is the set of nodes which diverge from $\sF^*$ outside of levels between $n_i^-$ and $n_i^+$ for some $i$.
More precisely, there is a homogeneous set $H$ (which then 
defines the notions $n_i^-$, $n_i^+$, etc.) such that for almost all $\vec n$
the stated inequality holds.
\end{lemma}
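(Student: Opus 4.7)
The plan is to iterate the partition-plus-boundedly-complete argument of Lemma~\ref{ml1}, once for each of the $k+1$ ``gap intervals'' at levels outside $[0,0^+]\cup\bigcup_i[n_i^-,n_i^+]$, but now against the fixed set $\sF^*$ rather than the $\vec n$-dependent $\sF_0(\vec n)$. I label the gaps $I_0=[0,n_1^-)$, $I_i=(n_i^+,n_{i+1}^-)$ for $1\le i\le k-1$, and $I_k=(n_k^+,\infty)$. The case of $I_0$ is already handled by Lemma~\ref{ml1} combined with Lemma~\ref{ml2}, since the branches of $\sF^*$ and $\sF_0(\vec n)$ agree up to level $n_1^-$ and hence so do their respective sets of divergent nodes below $n_1^-$.

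For each $i\in\{1,\dots,k\}$ I would partition $(k+2)$-tuples $\vec m=(m_1<\dots<m_{k+2})$ according to whether $\|\phi(\vec n^{(i)})\|_A<\eta/(k+1)$, where $\vec n^{(i)}$ is the $k$-tuple obtained from $\vec m$ by removing $m_{i+1}$ and $m_{i+2}$ (so these two coordinates play the role of an auxiliary pair $a<b$ sitting inside the $i$-th gap of $\vec n^{(i)}$), and $A$ is the set of nodes diverging from the fixed $\sF^*$ at a level in $[m_{i+1},m_{i+2}]$. A Ramsey ultrafilter produces a homogeneous $H_i\in\sU$, and I claim $H_i$ must lie on the small side. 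Otherwise, fix any $\vec n\in H_i^k$ sufficiently spread that the $i$-th gap (or the tail above $n_k$, in the case $i=k$) contains arbitrarily many $H_i$-elements, and choose $t$ pairwise disjoint pairs $a_1<b_1<\dots<a_t<b_t$ from $H_i$ inside that gap. Each augmented $(k+2)$-tuple lies in $H_i^{k+2}$, so produces a configuration $\sS_j$ of disjoint segments contained in the node set $A_j$ of $\sF^*$-divergence at levels in $[a_j,b_j]$ with $E$-norm at least $\eta/(k+1)$. Since the $A_j$ lie over pairwise disjoint level ranges, the blocks $B_j=\sum_{S\in\sS_j}S(\phi(\vec n))e_{o(S)}$ have pairwise disjoint supports in $(e_i)$, and the union $\bigcup_{j\le t}\sS_j$ is a disjoint configuration in $A$, so $\|\sum_{j\le t}B_j\|_E\le\|\phi(\vec n)\|_A\le\|\phi(\vec n)\|<\infty$ uniformly in $t$. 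Boundedly completeness of $(e_i)$ then forces the series $\sum_j B_j$ to converge, hence $\|B_j\|_E\to 0$, contradicting $\|B_j\|_E\ge\eta/(k+1)$.

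Finally I would set $H=\bigcap_{i=0}^{k}H_i\in\sU$ and define $n_i^\pm$ relative to this $H$. For $\vec n\in H^k$ and each $1\le i\le k-1$, the $(k+2)$-tuple $(n_1,\dots,n_i,n_i^+,n_{i+1}^-,n_{i+1},\dots,n_k)$ is in $H^{k+2}$, so the small-side inequality gives $\|\phi(\vec n)\|_{A_i}<\eta/(k+1)$, where $A_i$ denotes the $\sF^*$-divergence set for $I_i$; the cases $i=0,k$ are analogous, the unbounded tail $I_k$ being handled by taking the upper auxiliary $b\in H$ large enough to absorb any prescribed finite segment configuration. Because each branch of $T$ diverges from $\sF^*$ at a unique level, the sets $A_0,\dots,A_k$ are pairwise disjoint and each branch segment contained in $A=\bigcup_iA_i$ lies in a single $A_i$, so any disjoint configuration in $A$ partitions accordingly; the triangle inequality in $E$ then yields $\|\phi(\vec n)\|_A\le\sum_{i=0}^{k}\|\phi(\vec n)\|_{A_i}<\eta$. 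The technical heart of the argument is the bad-side boundedly-completeness step, where I have to verify that configurations drawn from distinct level gaps really do produce blocks with genuinely disjoint $(e_i)$-supports, so that boundedly completeness of $(e_i)$ applies and rules out uniformly bounded partial sums of disjointly supported blocks whose norms are bounded away from zero.
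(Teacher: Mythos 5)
Your proposal is correct and is essentially the paper's own argument: the same Ramsey partition with auxiliary coordinates marking level intervals, with the bad side ruled out by packing $t$ disjoint auxiliary pairs into a single gap of a fixed $\vec n$ and invoking bounded completeness of $(e_i)$ on the resulting disjointly supported blocks, each of norm bounded below. The only differences are organizational — you run $k$ separate partitions of $(k+2)$-tuples (one per gap, with the homogeneous condition asserting smallness \emph{on} the auxiliary interval, plus a limiting step for the unbounded tail) and you handle $I_0$ via Lemmas~\ref{ml1} and~\ref{ml2}, whereas the paper uses a single partition of $3k$-tuples $a_1<n_1<b_1<\cdots<a_k<n_k<b_k$ whose good side asserts smallness \emph{off} $\bigcup_i[a_i,b_i]$; this costs you only a harmless rescaling of $\eta$ (your $I_0$ bound is $2\eta$ rather than $\eta/(k+1)$).
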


\begin{proof}
Consider the partition of $3k$ tuples $a_1<n_1<b_1<a_2<n_2<b_2<\cdots <a_k<n_k<b_k$
according to whether $\| \phi(\vec n) \|_A <\eta$ where $A$ is the collection of nodes 
which diverge from $\sF^*$ at a level not in $\bigcup_{i=1}^k [a_i,b_i]$. 
Suppose that on the homogeneous side the stated property fails, and fix such an $H$. 
Then there is an $i$ such that on the homogeneous side $\| \phi(\vec n)\|_{A_i} >\frac{\eta}{k}$
where $A_i$ is the set of nodes which diverge from $\sF^*$ at a level in $[b_{i-1},a_i]$ (put $b_0=0$). 
We then consider $\vec n\in H^k$ where in between $n_{i-1}<n_i$ there are $t$ many pairs $(b_{i}^1, a_{i+1}^1) <\cdots <( b_{i}^t, a_{i+1}^t)$ where each $b_i^j$ is less than 
$a_{i+1}^j$. By homogeneity, $\| \phi(\vec n)\|_{B^j_i} > \frac{\eta}{k}$ for each $j\in [1,t]$
where $B^j_i$ is the set of nodes that diverge from $\sF^*$ at levels in $[b^j_i, a^j_{i+1}]$. 
Since $B^j_i$ is node disjoint from $B^{j'}_i$ for $j\neq j'$, a boundedly completeness argument gives that 
$\| \phi(\vec n)\|$ gets arbitrarily large for a large $t$, which is a contradiction.

So, on the homogeneous side the stated property holds. Let $H$ be homogeneous for the 
partition, which defines the notions $n_i^-$, $n_i^+$, etc.\ The homogeneity of $H$
then gives the statement of the lemma. 

\end{proof}

\begin{lemma} \label{ml4}
There are functions $\sF_1(\vec n),\dots, \sF_k(\vec n)$ such that each $\sF_i(\vec n)$
is a finite set of pairwise node disjoint infinite branch segments all of which diverge 
from $\sF^*$ at levels between $n_i^-$ and $n_i^+$, the union of the nodes in 
$\sF^*\cup \bigcup_{1\leq i\leq k} \sF_i(\vec n)$ is a tree, and such that for almost all $\vec n$
we have $\| \phi(\vec n)\|_A <\eta$, where $A$ is the union of the nodes not in this tree. 
\end{lemma}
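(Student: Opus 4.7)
I would construct $\sF_1,\ldots,\sF_k$ inductively in $i$, mimicking the definition of $\sF_0$ and the arguments of Lemmas~\ref{ml1}--\ref{ml3}, but localized to each window $[\nim,\nip]$. Set $\sF^{(0)}(\vec n)=\sF^*$. Having defined $\sF^{(i-1)}$ on a homogeneous set $H_{i-1}\in\sU$, define
$$\rho_i=\sup r,$$
the supremum being over all $r\in\R$ for which some $H\subseteq H_{i-1}$ in $\sU$ admits a function $\sG$ assigning to each $\vec n\in H^k$ a finite pairwise node-disjoint family of infinite branch segments diverging from $\sF^*$ at a level in $[\nim,\nip]$, with $\|\phi(\vec n)\|_{\sF^{(i-1)}(\vec n)\cup \sG(\vec n)}\geq r$ for all $\vec n\in H^k$. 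Choose $\sF_i$ realizing within $\eta'$ of $\rho_i$ on some $H_i\subseteq H_{i-1}$ (for a small $\eta'>0$ to be fixed), and set $\sF^{(i)}=\sF^{(i-1)}\cup\sF_i$. Since the branches of $\sF_i(\vec n)$ agree with $\sF^*$ outside $[\nim,\nip]$ and these windows are pairwise disjoint across $i$, the union $\sF^{(k)}(\vec n)=\sF^*\cup\bigcup_{i=1}^k\sF_i(\vec n)$ is a tree, as the lemma requires.

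The main step is to show, for each $i$ and almost every $\vec n$, that $\|\phi(\vec n)\|_{A_i}<\eta/(2k)$, where $A_i\subseteq T\setminus\sF^{(k)}(\vec n)$ is the set of nodes whose branch diverges from $\sF^*$ at a level in $[\nim,\nip]$. The argument parallels Lemma~\ref{ml1}: insert $2t$ auxiliary level markers $\mu_1<\nu_1<\cdots<\mu_t<\nu_t$ inside the window (arranged to lie in the appropriate $\sU$-homogeneous refinement so the window structure is preserved at the relevant scale), and partition according to whether the leftover mass of $\phi(\vec n)$ on nodes of $A_i$ whose branches diverge at a level in some $[\mu_j,\nu_j]$ exceeds a fixed threshold. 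If on the contrary side the threshold is always exceeded, homogeneity produces, within a single $\vec n$, $t$ node-disjointly supported finite segment families each of norm at least $\eta/(4kt)$. Extending each to an infinite branch segment diverging from $\sF^*$ inside the window and adjoining them to $\sF_i$ produces a candidate in the sup defining $\rho_i$; boundedly completeness of the basis $(e_i)$ of $E$, applied to $t$ disjointly-supported chunks of norm $\geq\eta/(4kt)$, forces the combined norm $\|\phi(\vec n)\|_{\sF^{(i-1)}(\vec n)\cup\sG(\vec n)}$ to exceed $\rho_i$ once $t$ is taken large, contradicting the near-maximal choice of $\sF_i$. A Ramsey pigeonhole on the ``types'' of the extensions (e.g.\ their first few splitting levels past the point of divergence) stabilizes the augmenting map into a bona fide function on a homogeneous set.

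Combining this per-window bound with Lemma~\ref{ml3} and the triangle inequality (applied to the decomposition of nodes outside $\sF^{(k)}(\vec n)$ into those diverging outside the windows and those diverging inside some window but not captured) yields $\|\phi(\vec n)\|_A\leq \eta+k\cdot\eta/(2k)=3\eta/2$. Starting the argument with $\eta$ replaced by $2\eta/3$ finishes the proof.

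\textbf{Main obstacle.} The crux is arranging the auxiliary subdivision levels inside each window to lie in a common $\sU$-homogeneous refinement, and stabilizing via Ramsey the types of excess-witnessing branches so that the augmented family defines a legitimate competitor in the definition of $\rho_i$. Boundedly completeness of $(e_i)$ in $E$ is the essential analytic ingredient that, once $t$ disjoint chunks of fixed positive norm have been isolated within a single $\vec n$, forces their combined norm to grow without bound in $t$ and produces the contradiction; without it, near-maximality of $\sF_i$ alone would not yield a strict excess over $\rho_i$.
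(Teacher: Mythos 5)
Your overall architecture (build the $\sF_i$ window by window, then combine with Lemma~\ref{ml3} by a triangle inequality at the end) matches the paper, but the way you produce each $\sF_i$ has two concrete problems, both stemming from importing the maximality-plus-Ramsey mechanism of Lemmas~\ref{ml1} and~\ref{ml3} into a place where it cannot run. First, your auxiliary markers $\mu_1<\nu_1<\cdots<\mu_t<\nu_t$ must lie strictly inside the window $(\nim,\nip)$, but $\nim$ and $\nip$ are by definition consecutive elements of the homogeneous set $H$ around $n_i$, so no element of $H$ (or of any further refinement, which only deletes elements) can be placed there; a Ramsey partition on longer tuples can only ever position markers outside the windows, which is precisely the region Lemma~\ref{ml3} already handles. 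You flag this as the ``main obstacle'' but do not resolve it, and it is not resolvable in the form proposed. Second, even granting the subdivision, your threshold $\eta/(4kt)$ decays with $t$: the boundedly-completeness argument blows up the norm of a sum of $t$ disjointly supported chunks only when each chunk has norm bounded below by a constant independent of $t$ (as in Lemma~\ref{ml1}, where each piece carries norm at least $\eta$); with chunks of size $\eta/(4kt)$ the partial sums need not grow, so no contradiction with the near-maximality of $\rho_i$ (or with $\|\phi(\vec n)\|\le kK$) is obtained.

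The underlying issue is that this lemma is much softer than you are treating it: no maximality, Ramsey, or boundedly-completeness argument is needed inside a window. For each \emph{fixed} $\vec n$, $\phi(\vec n)$ is a single element of $\jt(e_i)$ whose node-basis expansion converges in norm, so there is a finite set $F$ of nodes with $\|\phi(\vec n)\|_{T\sm F}<\eta/k$. Covering the finitely many nodes of $F$ that diverge from $\sF^*$ at a level in $[\nim,\nip]$ by finitely many infinite branch segments diverging from $\sF^*$ in that window already yields a family $\sF'_i(\vec n)$ whose residual is $<\eta/k$; one then only adjusts these segments so that, together with $\sF^*$, they are pairwise node disjoint and their union is a tree. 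That is the paper's entire proof. The genuinely hard content --- the mass diverging from $\sF^*$ outside the windows --- was already disposed of in Lemmas~\ref{ml1}--\ref{ml3}, and your final assembly of the two contributions is the same as the paper's.
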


\begin{proof}
Fix $1\leq i\leq k$ and we define $\sF_i(\vec n)$. To define $\sF_i(\vec n)$ we simply 
let $\sF'_i(\vec n)$ be a finite set of infinite branch segments all of which diverge from $\sF^*$
between $n_i^-$ and $n_i^+$ and such that $\| \phi(\vec n)\|_B< \frac{\eta}{k}$
where $B$ is the set of nodes which diverge from $\sF^*$ between $n_i^-$ and $n_i^+$
which are not in $\cup \sF'_i$. We can then easily get infinite branch segments $\sF_i$ 
such that $\sF^* \cup \sF'_i(\vec n)$ and $\sF^* \cup \sF_i(\vec n)$ have the same set of nodes
and $\sF_i(\vec n)$ are pairwise node disjoint and branch from $\sF^*$ in $(n_i^-,n_i^+)$. 
The $\sF_i(\vec n)$ satisfy the statement of the lemma.
\end{proof}

We now consider interlaced tuples $(\vec n,\vec m)= n_1<m_1<\cdots < n_k<m_k$. 
For such a tuple we have the infinite branch segments $\sF_i(\vec n)$ defined for $1 \leq i \leq k$
and also the infinite branch segments $\sF_i(\vec m)$. Note that any two distinct infinite branch 
segments $b_1$ and $b_2$ from  $\bigcup_i \sF_i(\vec n)$ and $ \bigcup_i \sF_i(\vec m)$ respectively 
are node disjoint, as they diverge from $\sF^*$ at different levels.

The following lemma is immediate from Lemma~\ref{ml4}. 

\begin{lemma} \label{ml5}
Let $\eta>0$. For almost all $(\vec n,\vec m)$ we have $\| \phi(\vec n)\|_A, \| \phi(\vec m)\|_B<\eta$
where $A$ is the collection of nodes not in $\sF^*\cup \bigcup_{1\leq i\leq k} \sF_i(\vec n)$
and likewise $B$ is the collection of nodes not in 
$\sF^*\cup \bigcup_{1\leq i\leq k} \sF_i(\vec m)$.
\end{lemma}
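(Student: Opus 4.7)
The plan is to derive Lemma~\ref{ml5} directly from Lemma~\ref{ml4} by applying its conclusion to each of the two $k$-subtuples of an interlaced $2k$-tuple; this is presumably what the excerpt has in mind when it says the result is ``immediate.''

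First I would invoke Lemma~\ref{ml4} to fix a homogeneous set $H\in\sU$ together with the functions $\sF_1,\dots,\sF_k$ such that for all $\vec n\in H^k$ one has $\|\phi(\vec n)\|_A<\eta$, where $A$ is the set of nodes not lying in $\sF^*\cup\bigcup_{1\leq i\leq k}\sF_i(\vec n)$. Since $\sU$ is a Ramsey ultrafilter, the collection of interlaced $2k$-tuples $n_1<m_1<\cdots<n_k<m_k$ with all coordinates in $H$ is $\sU$-large (a homogeneous set produces a $\sU$-large tree for tuples of any fixed length). For any such tuple the two subtuples $\vec n=(n_1,\dots,n_k)$ and $\vec m=(m_1,\dots,m_k)$ both lie in $H^k$, and the notations $n_i^\pm$, $m_i^\pm$ are computed relative to $H$ as in Lemma~\ref{ml3}.

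Applying the conclusion of Lemma~\ref{ml4} separately to $\vec n$ and to $\vec m$ then yields simultaneously
\[
\|\phi(\vec n)\|_A<\eta\quad\text{and}\quad \|\phi(\vec m)\|_B<\eta,
\]
with $A$ and $B$ as in the statement. The compatibility of the $\sF_i(\vec n)$ with the $\sF_j(\vec m)$ (namely that they consist of pairwise node disjoint infinite branch segments, because $\sF_i(\vec n)$ diverges from $\sF^*$ at levels between $n_i^-$ and $n_i^+$ while $\sF_j(\vec m)$ diverges between $m_j^-$ and $m_j^+$, and the $n_i,m_j$ are interlaced and distinct) was already recorded just before the statement of the lemma, so no further step is needed. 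I do not anticipate any real obstacle here, because the content of Lemma~\ref{ml5} is just Lemma~\ref{ml4} read twice, once for each half of the interlaced tuple, made available simultaneously by the homogeneity coming from the Ramsey ultrafilter.
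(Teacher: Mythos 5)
Your proposal is correct and matches the paper's intent exactly: the paper states Lemma~\ref{ml5} is ``immediate from Lemma~\ref{ml4},'' and your argument simply spells out that immediacy by applying Lemma~\ref{ml4} to each of the two subtuples $\vec n,\vec m\in H^k$ of an interlaced $2k$-tuple from the homogeneous set. Nothing further is needed.
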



\begin{lemma} \label{ia1}
Let $\phi \colon \N^k \to \jt(e_i)$ be such that if the $k$-tuples $\vec n$, $\vec m$ 
are interlaced (i.e., $n_1<m_1<\cdots<n_k<m_k$) then $\| \phi(\vec n)-\phi(\vec m)\|<K$,
for some constant $K$. 
Then there is a finite set $\sF^*$ of infinite branch segments in $T$ and a set $H\in \sU$ 
such that for all $\vec n\in H^k$ we have $\| \phi(\vec n)\|_A< (K+1)$
where $A=T\sm \cup \sF^*$ is the set of nodes in $T$ not in any branch segment of $\sF^*$. 

\end{lemma}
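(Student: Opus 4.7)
The plan is to leverage the reduction machinery of Lemmas \ref{ml1}--\ref{ml5}: recall that $\sF^*$ is a fixed finite family of infinite branch segments, all splitting below level $0^+$, that captures the ``stable'' component of $\phi(\vec n)$, while for each $\vec n$ the family $\bigcup_i \sF_i(\vec n)$ consists of infinite branch segments that diverge from $\sF^*$ at levels in the windows $(n_i^-, n_i^+)$. By Lemma \ref{ml5}, $\phi(\vec n)$ is $\eta$-supported on $\sF^*\cup \bigcup_i \sF_i(\vec n)$. Since $A$ excludes $\sF^*$, the bound $\|\phi(\vec n)\|_A < K+1$ must essentially come from the ``divergent portions'' $\sF_i(\vec n)\sm\sF^*$. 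These portions shift with $\vec n$; the hypothesis $\|\phi(\vec n)-\phi(\vec m)\|<K$ for interlaced pairs will bound them, because an interlaced partner's divergent portions land on completely different branches of the tree.

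Fix $\eta>0$ small and let $H\in \sU$ be homogeneous for Lemmas \ref{ml4} and \ref{ml5}. For $\vec n\in H^k$, choose a family $\sS=\sS(\vec n)$ of pairwise disjoint segments of $A=T\sm \cup \sF^*$ with $\|\phi(\vec n)\|_{\sS} > \|\phi(\vec n)\|_A - \eta$, and partition $\sS=\sS_1\sqcup \sS_2$ according to whether each segment lies in a branch of some $\sF_i(\vec n)$. Segments in $\sS_2$ are contained in $T\sm(\sF^*\cup \bigcup_i \sF_i(\vec n))$, so Lemma \ref{ml5} and the triangle inequality in $E$ give $\|\phi(\vec n)\|_{\sS_1} > \|\phi(\vec n)\|_A - 2\eta$. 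Next, choose $\vec m\in H^k$ with ``buffer'' elements $h_i, h'_i\in H$ so that
\[
n_1<h_1<m_1<h'_1<n_2<h_2<m_2<h'_2<\cdots <n_k<h_k<m_k,
\]
which forces the intervals $(n_i^-,n_i^+)$ and $(m_j^-,m_j^+)$ to be pairwise disjoint. A short geometric check (below) then shows that $\bigcup_i \sF_i(\vec n)\sm\sF^*$ and $\bigcup_j \sF_j(\vec m)\sm\sF^*$ are node-disjoint, so $\sS_1\subseteq T\sm(\sF^*\cup \bigcup_j \sF_j(\vec m))$, and a second application of Lemma \ref{ml5}, now to $\vec m$, gives $\|\phi(\vec m)\|_{\sS_1}<\eta$. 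Combining with the interlacing hypothesis,
\[
\|\phi(\vec n)\|_{\sS_1}\leq \|\phi(\vec n)-\phi(\vec m)\|+\|\phi(\vec m)\|_{\sS_1}<K+\eta,
\]
so $\|\phi(\vec n)\|_A<K+3\eta$, and choosing $\eta<1/3$ completes the proof.

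The main obstacle is the geometric node-disjointness claim. For branches $b_1\in \sF_i(\vec n)$ and $b_2\in \sF_j(\vec m)$ diverging from branches $b^*, b^{*'}\in \sF^*$ at respective levels $\ell_1\in (n_i^-,n_i^+)$ and $\ell_2\in (m_j^-,m_j^+)$, the buffer structure guarantees $\ell_1\neq \ell_2$. If $b^*\neq b^{*'}$, they split below $0^+$, so $b_1$ and $b_2$ share nodes only on the common prefix of $b^*$ and $b^{*'}$, which lies in $\sF^*$. If $b^*=b^{*'}$ with $\ell_1<\ell_2$ say, then $b_1$ leaves $b^*$ just above level $\ell_1$ while $b_2$ is still on $b^*$, so $b_1$ and $b_2$ split no later than $\ell_1+1$ and share no nodes above that level, while below $\ell_1$ they both lie on $b^*\in \sF^*$. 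Either way, all common nodes lie in $\sF^*$ and hence are excluded from $b_1\sm\sF^*$ and $b_2\sm\sF^*$. A routine bookkeeping point is that the several ultrafilter refinements of Lemmas \ref{ml1}--\ref{ml5} can be coordinated into a single homogeneous $H$ since $\sU$ is Ramsey.
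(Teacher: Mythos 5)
Your proof is correct and follows essentially the same route as the paper's: decompose the norming set for $\|\phi(\vec n)\|_A$ into the part on $\bigcup_i \sF_i(\vec n)\setminus\cup\sF^*$ (your $\sS_1$, the paper's $A_2$) and the rest (killed by Lemma~\ref{ml5} for $\vec n$), then control the first part by $\|\phi(\vec n)-\phi(\vec m)\|+\|\phi(\vec m)\|_{\sS_1}$ using Lemma~\ref{ml5} for an interlaced $\vec m$ together with the node-disjointness of the divergent branch families. Your explicit buffer elements and the geometric check of node-disjointness make precise a step the paper only asserts in the paragraph preceding Lemma~\ref{ml5}, so this is a faithful (indeed slightly more careful) rendering of the same argument.
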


\begin{proof}
Let $0<\eta<1/2$. Let $H\in\sU$ and $\sF^*$, $\sF_i$ for $1 \leq i \leq k$ (so $\sF^*$ is a fixed set, and the $\sF_i$ are functions of $\vec n$) be as in above lemmas. 
Consider an interlaced tuple $n_1<m_1<\cdots <n_k<m_k$ from $H$. Let $A$ be the nodes of $T$ not in 
$\cup \sF^*$. Then $A=A_1\cup A_2$ where $A_1 = T\sm \cup (\sF^*\cup \bigcup_{1\leq i \leq k} \sF_i(\vec n))$
and $A_2= \cup (\sF^*\cup \bigcup_{1\leq i \leq k} \sF_i(\vec n)) \sm \cup \sF^*$. 
From Lemma~\ref{ml5} we have that $\| \phi(\vec n)\|_{A_1}<\eta$ and $\| \phi(\vec m)\|_{A_2}<\eta$. Thus

\begin{align*}
\|\phi(\vec n)\|_A&\le \|\phi(\vec n)\|_{A_2}+ \|\phi(\vec n)\|_{A_1}
\le \|\phi(\vec n)\|_{A_2}+\eta\\
& \leq \|\phi(\vec n)-\phi(\vec m)\|_{A_2}+ \| \phi(\vec m)\|_{A_2}+ \eta\\
& \leq \|\phi(\vec n)-\phi(\vec m)\|_{A_2}+2 \eta\\
&\le \|\phi(\vec n)-\phi(\vec m)\|+2\eta\\
&\le K+2\eta.
\end{align*}

\end{proof}

We summarize the contents of Lemmas~\ref{ml1}--\ref{ia1} in the following. Note that we merely assume that $\jt(e_i)$ has boundedly complete node basis.

\begin{theorem} \label{mls}
Let $\jt(e_i)$ be a James tree space with a boundedly complete node basis. Suppose $\phi \colon \N^k\to \jt(e_i)$ satisfies (\ref{rho}).  Let $\sU$ be a Ramsey ultrafilter on $\N$.
Then for any $\eta>0$ there is a $H\in \sU$, a finite set of pairwise disjoint infinite 
branch segments $\sF^*$ in $T$, and functions $\sF_1,\dots, \sF_k$ with domain $\N^k$
satisfying:
\begin{enumerate}
\item
For all $\vec n\in H^k$, $\sF_i(\vec n)$ is a finite set of infinite branch segments, 
each of which diverges from $\sF^*$
at a level between $n_i^-$ and $n_i^+$, where $n_i^-$, $n_i^+$ are defined with respect to 
$H$ (e.g., $n_i^+$ is the least element of $H$ greater than $n_i$).
Also, the collection of nodes in $T$ in the branch segments $\sF^*\cup \bigcup_{1\leq i \leq k} \sF_i(\vec n)$
forms a tree. 
\item
For all $\vec n \in H^k$, $\| \phi(\vec n)\|_A <\eta$, where $A$ is the collection of nodes 
not in $\sF^*\cup \bigcup_{1\leq i \leq k} \sF_i(\vec n)$.

\item
For all $\vec n\in H^k$ we have $\| \phi(\vec n)\|_A< (K+1)$
where $A=T\sm \cup \sF^*$ is the set of nodes in $T$ not in any branch of $\sF^*$. 
\end{enumerate}
\end{theorem}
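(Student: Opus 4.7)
The plan is to obtain Theorem~\ref{mls} as the consolidated conclusion of Lemmas~\ref{ml1}--\ref{ia1}, by choosing a single $H\in\sU$ inside the intersection of the finitely many homogeneous sets produced there. First, using the asymptotic linearization (Theorem~\ref{linearization}) and Lemma~\ref{se}, I would arrange on a $\sU$-large set that every vector $\phi(\vec n)$ is norm-approximated up to $\eta$ by a collection of segments supported in the level-intervals $[0,0^+]\cup \bigcup_i [n_i^-,n_i^+]$, where $n_i^-$ and $n_i^+$ are the predecessor and successor of $n_i$ in $H$.

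Next I would introduce $\rho_0$ as the supremum of the mass contributed by a finite family of pairwise disjoint infinite branch segments starting below $0^+$, and fix $\sF_0^\eta(\vec n)$ nearly achieving this supremum. Lemma~\ref{ml1} then gives a homogeneous $H_1\in\sU$ on which the nodes diverging from $\sF_0(\vec n)$ below $n_1^-$ contribute less than $2\eta$; Lemma~\ref{ml2} stabilizes each ordered branch $b^0_i(\vec n)$ through level $n_1^-$ on an $H_2\in\sU$, producing the fixed finite family $\sF^*=\{b^*_0,\ldots,b^*_{p_0}\}$; Lemma~\ref{ml3} refines this to $H_3\in\sU$ so that divergences from $\sF^*$ at levels outside $\bigcup_i [n_i^-,n_i^+]$ contribute less than $\eta$. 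Each step uses the same boundedly-completeness argument: if the contrary property held on the homogeneous side, one could concatenate many pairwise node-disjoint contributions and violate the uniform bound $\|\phi(\vec n)\|\le kK$ coming from (\ref{rho}). Finally, Lemma~\ref{ml4} defines, for each $\vec n\in H_3^k$, finite disjoint families $\sF_i(\vec n)$ branching from $\sF^*$ inside $(n_i^-,n_i^+)$ and capturing all but $\eta$ of the mass outside $\sF^*\cup\bigcup_j \sF_j(\vec n)$. This yields items (1) and (2) on $H:=H_1\cap H_2\cap H_3$.

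For item (3) I would invoke the interlacing argument of Lemma~\ref{ia1}: pick an interlaced pair $\vec n,\vec m\in H^k$ and split the nodes outside $\cup\sF^*$ as $A=A_1\cup A_2$, where $A_1 = T\setminus \cup(\sF^*\cup \bigcup_i\sF_i(\vec n))$ and $A_2=\cup(\sF^*\cup\bigcup_i \sF_i(\vec n))\setminus \cup\sF^*$. By the previous items $\|\phi(\vec n)\|_{A_1}<\eta$, while the branches in $A_2$ all diverge from $\sF^*$ between $n_i^-$ and $n_i^+$ and hence are node-disjoint from $\cup(\sF^*\cup\bigcup_i\sF_i(\vec m))$, so $\|\phi(\vec m)\|_{A_2}<\eta$ as well. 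Then the triangle inequality together with $\|\phi(\vec n)-\phi(\vec m)\|\le K\cdot d_{\K}(\vec n,\vec m)=K$ yields $\|\phi(\vec n)\|_A\le K+2\eta<K+1$ after shrinking $\eta$.

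The main obstacle is purely bookkeeping: one must ensure that the notation $n_i^-,n_i^+$ used by each subsequent lemma is always taken with respect to the \emph{current} homogeneous set, so that after passing to $H_1\cap H_2\cap H_3$ all conclusions remain valid simultaneously. Because $\sF^*$, $\rho_0$, and the divergence-mass bounds are all defined in terms of the same $0^+$, $n_i^-$, $n_i^+$, shrinking the homogeneous set does not spoil any earlier step, and the final $H$ inherits every stated property.
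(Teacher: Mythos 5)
Your proposal is correct and follows essentially the same route as the paper: Theorem~\ref{mls} is there stated explicitly as a summary of Lemmas~\ref{ml1}--\ref{ia1}, and your chain (Lemma~\ref{se}, the extremal family $\sF_0^\eta$, the stabilization to $\sF^*$ via Lemmas~\ref{ml1}--\ref{ml3}, the definition of the $\sF_i(\vec n)$ in Lemma~\ref{ml4}, and the interlacing/triangle-inequality argument of Lemma~\ref{ia1} with the split $A=A_1\cup A_2$) reproduces that argument, including the need for $\eta<1/2$ in item (3).
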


\subsection{Pigeonhole lemma and stabilization on a finite set of branches}
The second part of the proof involves a pigeonhole trick and a stabilization of embedding $\phi$ on the finite set of branches $\sF^*$ from Theorem \ref{mls}. The assumption that $\ell_{\infty}^n$'s do not belong to the asymptotic structure of $E$ in Theorem \ref{tree-space} is essential in the following simple pigeonhole lemma which generalizes Lemma \ref{pigeonhole}.

Recall that $\{X\}_n$ denotes the asymptotic structure of $X$, see the discussion before Corollary \ref{lin-asy} for details. 

\begin{lemma}\label{M-k} Let $(e_i)$ be a 1-unconditional basis for $E$. Suppose that $\ell^n_{\infty}$'s do not belong to the asymptotic structure of $E$. Let $K\ge 1$. For all $k\in\N$ and $\ep>0$ there exists $M=M(k, K, \ep)$ such that if $(u_i)_{i=1}^M\in \{\jt(e_i)\}_M$ and $\|\sum_{i=1}^{M}u_i\|\le K$ then there exists an interval $1\le N<N+k<M$ such that $\sum_{i=N}^{N+k}\|u_i\|<\ep$.
\end{lemma}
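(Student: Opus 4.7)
My plan is to reduce the question to the asymptotic structure of $E$ itself via a \emph{representative} construction. For each $u_i$, choose pairwise disjoint segments $\{S_{i,j}\}_j$ contained in the tree-support of $u_i$ that achieve $\|u_i\|_{\jt(e_i)}$, and define $\tilde u_i = \sum_j S_{i,j}(u_i)\, e_{o(S_{i,j})}\in E$, so $\|\tilde u_i\|_E=\|u_i\|_{\jt(e_i)}$. Because the $u_i$ have disjoint tree-supports, the combined family $\{S_{i,j}\}_{i,j}$ is a valid collection of pairwise disjoint segments for computing $\|\sum u_i\|_{\jt(e_i)}$, which yields the crucial inequality $\|\sum u_i\|_{\jt(e_i)}\ge \|\sum \tilde u_i\|_E$. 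Moreover $(\tilde u_i)_{i=1}^M$ is a $1$-unconditional block sequence of $(e_i)$ in $E$ whose supports go arbitrarily far out in $\N$ since the $u_i$ are permissible in $\jt(e_i)$.

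I would then reformulate the hypothesis quantitatively. By $1$-unconditionality of $(e_i)$, for any normalized permissible block sequence $(x_j)_{j=1}^n$ in $E$ one has $\max|a_j|\le \|\sum a_j x_j\|_E\le (\max|a_j|)\,\|\sum x_j\|_E$ (the upper bound by monotonicity, the lower bound by comparing with the single-term vector $a_{j_0} x_{j_0}$), so $(x_j)$ is $\|\sum x_j\|_E$-equivalent to the $\ell_\infty^n$ basis. Consequently the hypothesis ``$\ell_\infty^n\notin\{E\}_n$'' is equivalent to: for every $C>0$ there is an $n=n(C)$ such that $\|\sum_{j=1}^n x_j\|_E>C$ for every normalized permissible block sequence $(x_j)_{j=1}^n$ in $E$.

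Given $k\in\N$ and $\ep>0$, set $\mu=\ep/(k+1)$, pick $n=n(K/\mu)$, and take $M=n(k+1)$. If the conclusion of the lemma fails, then for every interval $[N,N+k]\subseteq [1,M]$ one has $\sum_{i=N}^{N+k}\|u_i\|\ge \ep$. Partitioning $[1,M]$ into $n$ consecutive windows $B_1<\cdots<B_n$ of length $k+1$ and choosing from each an index $i_l\in B_l$ with $c_l:=\|u_{i_l}\|\ge \mu$, I would apply $1$-unconditionality twice: restricting to the subsequence gives $\|\sum_l \tilde u_{i_l}\|_E\le \|\sum_i \tilde u_i\|_E\le K$, and writing $\tilde u_{i_l}=c_l x_l$ with $x_l$ normalized, monotonicity (as $c_l/\mu\ge 1$) gives $\|\sum_l \tilde u_{i_l}\|_E\ge \mu\,\|\sum_l x_l\|_E$. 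Since $(x_l)_{l=1}^n$ is normalized and permissible in $E$, by the choice of $n$ we get $\|\sum_l x_l\|_E>K/\mu$, whence $K\ge \mu\cdot(K/\mu)=K$, a strict contradiction. The main obstacle I anticipate is precisely the quantitative reformulation of the asymptotic hypothesis; that step rests on $1$-unconditionality, which is simultaneously what converts a bound on $\|\sum x_j\|_E$ into control of the full Banach--Mazur distance to $\ell_\infty^n$ and what makes the passage from $(\tilde u_i)$ to any subsequence norm-decreasing in $E$.
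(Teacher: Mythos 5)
Your proof is correct and follows essentially the same route as the paper's: argue by contradiction, pass to norming representatives in $E$ whose disjoint segment families combine to give $\|\sum_i \tilde u_i\|_E\le\|\sum_i u_i\|\le K$, and use $1$-unconditionality to exhibit a long normalized permissible block sequence in $E$ uniformly equivalent to the $\ell_\infty^n$ basis, contradicting the asymptotic hypothesis. The only differences are bookkeeping: the paper groups the $u_i$ into consecutive blocks $w_j$ and takes representatives of those, whereas you select one large $u_{i_l}$ from each window of length $k+1$, which if anything handles the factor of $k$ (absorbed silently in the paper's claim $\|w_j\|\ge\ep$) more cleanly.
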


\begin{proof}
Otherwise, there exist $k$ and $\ep>0$ such that for all $M$, $\|\sum_{j=1}^{M/k}\sum_{i=(j-1)k}^{jk}u_i)\|\le K$ with $\|w_j\|\ge \ep$ where $w_j=\sum_{i=(j-1)k}^{jk}u_i$. Let $E(w_j)$ be norming representatives of $w_j$'s in $E$. This implies $(E(w_j))_{j=1}^{M/k}$ is $K/\ep$ equivalent to the unit vector basis of $\ell^{\infty}_{M/k}$. Since $M$ is arbitrary, and $(u_i)_{i=1}^M\in \{\jt(e_i)\}_M$ implies the supports of $E(w_j)$'s are arbitrarily far out with respect to $(e_i)$, and this means that we have $\ell^{\infty}_{n}\in\{E\}_n$ for all $n$ (with constant $K/\ep$), contradicting the assumption.
\end{proof}

The proposition below is a generalization of Proposition \ref {l_2-blocks}.

First, some notation. Suppose $u_1<u_2<\ldots<u_M$ is a block sequence where the support of $u_i$ is contained in between the levels $n^-_i$ and $n^+_i$ and $n^+_i<n^-_{i+1}$, that is, there is at least one level gap between the supports of $u_i$'s, and let $u=\sum_{i=1}^Mu_i$. Among the representatives of $u$ in $E$ consider those $\sum_{j=1}^k t_j(u)e_{o(t_j)}$ given by {\em trivial segments} $t_j$'s. We say that a finite segment $t$ is {\em trivial} if $t\subseteq [n^-_i, n^+_i]$ (recall that $[n^-_i, n^+_i]$ refers to  nodes at levels between $n^-_i$ and $n^+_i$) for some $i$. Of course, this notion depends on the given blocking $u=\sum_{i=1}^M u_i$.  Thus a trivial representative is of the form
$$E^t(\phi(\vec n)):=\sum_{i=0}^M \sum_{t^i_j\in A_i}t^i_j(h(n_1, \ldots, n_i))e_{o(t^i_j)}$$ where $A_i$ is a set of trivial segments $t^i_j$ contained in $[n^-_i, n^+_i]$. By $P_{\sF^*}$ we denote the natural projection (restriction) on the finite set of infinite branch segments $\sF^*$.

\begin{prop}\label{trivial-segments}
Let $\sU$ be a Ramsey ultrafilter and $\ep>0$.
Let $\jt(e_i)$ be as in Theorem \ref{tree-space}, for $M\in \N$ let $\phi:(\N^M, d_{\K})\to \jt(e_i)$ be as in (\ref{rho}), and $\sF^*$ be as in Theorem \ref{mls}.  Then there exists $H\in\sU$ such that for all tuples $\vec{n}$ and $\vec{m}$ in $H^M$, if on an interval $I\subseteq [1,M]$, $(n_i)_{i\in I}$ and $(m_i)_{i\in I}$ are interlacing then 
\begin{align*} \Big\|\sum_{i\in I}P_{\sF^*}h(n_1, \ldots n_i)-P_{\sF^*}h(m_1, \ldots m_i)\Big\| \le
4K+2\ep.
\end{align*}
\end{prop}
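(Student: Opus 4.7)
My strategy is to generalize the proof of Proposition \ref{l_2-blocks}, using the finite-branch projection $P_{\sF^*}$ in place of the whole space and leveraging monotonicity of the node basis of $\jt(e_i)$ (interval projections have norm at most $2$) as the analog of Lemma \ref{J-blocks}.

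First I will apply Corollary \ref{lin-Ramsey} to $\phi$ together with Theorem \ref{mls}, and then further shrink $H\in\sU$ via finite $\ep/M$-net Ramsey partitions so that every $\vec n\in H^M$ admits the linearization $\phi(\vec n)\approx h_0+\sum_i h(n_1,\dots,n_i)$ with $h(n_1,\dots,n_i)$ supported in the level range $[n_i^-,n_i^+]$, and so that the bounded scalar quantities appearing in the argument (the norms $\|P_{\sF^*} h(n_1,\dots,n_i)\|$ and the branch sums $S_b(h(n_1,\dots,n_i))$ for $b\in\sF^*$) are each stabilized to within $\ep/M$ across $H^M$. Next I will establish the key interval bound: for any fully interlacing pair $\vec n^*,\vec m^*\in H^M$, (\ref{rho}) together with the $1$-boundedness of $P_{\sF^*}$ give $\|P_{\sF^*}(\phi(\vec n^*)-\phi(\vec m^*))\|\le K$, and monotonicity of the node basis then yields $\|\sum_{i\in J}P_{\sF^*}(h(\vec n^*_{\le i})-h(\vec m^*_{\le i}))\|\le 2K$ for every contiguous $J\subseteq[1,M]$, obtained by extracting the corresponding level-interval in the tree.

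Given $\vec n,\vec m\in H^M$ interlacing on $I=[i_0,i_1]$ (WLOG $n_{i_0}\le m_{i_0}<\cdots<n_{i_1}\le m_{i_1}$), I will choose padding $(p_j)_{j<i_0}$ in $H$ satisfying $\max(n_j,m_j)<p_j<\min(n_{j+1},m_{j+1})$ together with $p_{i_0-1}<\min(n_{i_0},m_{i_0})$, along with $(q_j)_{j>i_1}$ in $H$ satisfying $q_{i_1+1}>\max(n_{i_1},m_{i_1})$. Setting $\vec n^{(q)}=(n_1,\dots,n_{i_1},q_{i_1+1},\dots,q_M)$, $\vec m^{(q)}=(m_1,\dots,m_{i_1},q_{i_1+1},\dots,q_M)$, and
\[
\vec \beta=(p_1,\dots,p_{i_0-1},m_{i_0},\dots,m_{i_1},q_{i_1+1},\dots,q_M),
\]
both $(\vec n^{(q)},\vec \beta)$ and $(\vec m^{(q)},\vec \beta)$ will be fully interlacing pairs in $H^M$ by construction of $(p_j)$. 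For each $i\in I$ the telescoped blocks are $h(n_1,\dots,n_i)$, $h(m_1,\dots,m_i)$, and the shared $h(p_1,\dots,p_{i_0-1},m_{i_0},\dots,m_i)$ respectively. I will extract the $I$-level-interval from each of $\phi(\vec n^{(q)})-\phi(\vec \beta)$ and $\phi(\vec m^{(q)})-\phi(\vec \beta)$---each bounded by $2K$ via the interval bound of the second phase---and subtract to cancel the common $\vec \beta$-blocks and recover the target sum. Triangle inequality then yields $\|\text{target}\|\le 4K$, with the $\ep/M$-Ramsey errors summed over $|I|\le M$ contributing the residual $2\ep$.

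The hard part will be the simultaneous construction of the padding $(p_j)_{j<i_0}$ in $H$ that interlaces both prefix tuples $(n_j)_{j<i_0}$ and $(m_j)_{j<i_0}$; this is feasible because $H$ is infinite and the strict interleaving constraints $\max(n_j,m_j)<p_j<\min(n_{j+1},m_{j+1})$ are satisfiable, but it dictates the quantifier order in the Ramsey setup. A secondary technical point will be to check that the $I$-level-interval projection strictly separates the $i<i_0$ and $i>i_1$ contributions of both auxiliary telescopes, which rests on the gaps between consecutive elements of $H$ being sufficient to place the support levels $p_{i_0-1}^+$ strictly below $n_{i_0}^-$ and $\max(n_{i_1},m_{i_1})^+$ strictly below $q_{i_1+1}^-$.
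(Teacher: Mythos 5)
Your proposal takes a genuinely different route from the paper's, which never needs any boundedness of $P_{\sF^*}$: the paper bounds $\big\|\sum_{i\in I}u_i\big\|$ directly by taking an arbitrary norming family of segments, breaking each segment at block boundaries at the cost of a factor $2$ (using that the branch functionals $S_b$, $b\in\sF^*$, are nearly zero on the difference blocks), splitting each resulting within-block segment into its $\vec n$-part and $\vec m$-part, and then invoking a bound of $K$ on \emph{trivial} representatives which itself comes from an interlacing pair. The central step of your argument --- the assertion that $P_{\sF^*}$ is $1$-bounded, so that $\|P_{\sF^*}(\phi(\vec n^*)-\phi(\vec m^*))\|\le K$ --- is a genuine gap. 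Restriction to a finite union of pairwise disjoint infinite branch segments is not obviously a contraction (or even uniformly bounded) on $\jt(e_i)$: a norming segment $S$ for $P_{\sF^*}y$ evaluates as $S(P_{\sF^*}y)=(S\cap \cup\sF^*)(y)$, and replacing $S$ by $S\cap\cup\sF^*$ both changes the index $o(S)$ attached to the coefficient in $E$ (not norm-decreasing for a general non-spreading unconditional basis) and may split $S$ into several segments, incurring a loss depending on $|\sF^*|$, which is not controlled by $K$. This is exactly the difficulty the paper's trivial-segments mechanism is designed to circumvent (note that Theorem~\ref{mls}(3) deliberately uses the seminorm $\|\cdot\|_A$, computed with segments \emph{respecting} $A$, rather than $\|P_A\cdot\|$). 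The summing-functional stabilization you set up in your first phase is the ingredient that would let you do this work, but you never use it.

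A second, independent gap: your padding requires $\max(n_j,m_j)<p_j<\min(n_{j+1},m_{j+1})$ for $j<i_0$, which presupposes $\max(n_j,m_j)<\min(n_{j+1},m_{j+1})$. The proposition imposes no relation between the coordinates of $\vec n$ and $\vec m$ outside $I$, so such $p_j$ need not exist (e.g.\ the entire prefix of $\vec m$ could lie above the entire prefix of $\vec n$), and you cannot substitute a common prefix since $h(n_1,\dots,n_i)$ depends on the whole prefix. This is harmless in the application to Theorem~\ref{tree-space}, where the tuples agree off $I$, but it does not prove the proposition as stated. The rest of your architecture (telescoping both tuples against a common $\vec\beta$, extracting the $I$-interval via monotonicity of the node basis, and the triangle inequality giving $2K+2K$) is sound modulo the support-gap conditions you already flag, but it cannot stand without a substitute for the unproved norm bound on $P_{\sF^*}$.
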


\begin{proof}
For almost all $\vec n$ we have $$\phi(\vec n)=h_0+\sum_{i=1}^M h(n_1,\ldots, n_i)$$ where blocks $h(n_1,\ldots, n_i)$ are supported in $[n^-_i,n^+_i]$. First, we note that for almost all $\vec n$ the trivial representatives of $\sum_{i=1}^M h(n_1,\ldots, n_i)$ satisfy $$\Big\|E^t(\sum_{i=1}^M h(n_1,\ldots, n_i))\Big\|_E\le K.$$
Indeed, consider $n_1<m_1<\ldots<n_M<m_M$ and let $A_i$ be a set of trivial segments $t^i_j$ contained in $[n^-_i, n^+_i]$, then 
\begin{align*}
\Big\|E^t(\sum_{i=1}^M h(n_1,\ldots, n_i))\Big\|_E&=\Big\|\sum_{i=1}^M \sum_{t^i_j\in A_i}t^i_j(h(n_1, \ldots, n_i))e_{o(t^i_j)}\Big\|_E\\
&=\Big\|\sum_{i=1}^M \sum_{t^i_j\in A_i}t^i_j(h(n_1, \ldots, n_i)-h(m_1,\ldots,m_i))e_{o(t^i_j)}\Big\|_E\\
&\le \|\phi(\vec n)-\phi(\vec m)\|\\
&\le K.
\end{align*}

Let $\sF^*=\{b_1,\ldots, b_p\}$ be the finite set of disjoint infinite branch segments. By Ramsey there exists a homogeneous set $H\in\sU$ such that for all $\vec n, \vec m\in H^M$ we have 
$$\Big|S_{b_j}\big(h(n_1, \ldots, n_i)-h(m_1, \ldots, m_i)\big)\Big|<\ep/M,\ 1\le j\le p, 1\le i\le M.$$ 

Fix $\vec n, \vec m\in H^M$ and let $I$ be an interval in $[1,M]$ as in the statement of the theorem, that is, $(n_i)_{i\in I}$ and $(m_i)_{i\in I}$ are interlacing. Put $u_i=P_{\sF^*}h(n_1, \ldots n_i)-P_{\sF^*}h(m_1, \ldots m_i)$, $i\in I$ for brevity and consider a norming representative for $\sum_{i\in I}u_i$. That is,
$$\Big\|\sum_{i\in I}u_i\Big\|\le \Big\|\sum_{j=1}^l s_j\big(\sum_{i\in I}u_i\big)e_{o(s_j)}\Big\|_E+\ep,$$
for some set of segments $(s_j)_{j=1}^l$. 

Since $S_{b_j}(u_i)$'s are essentially zero, if $s_j$'s
contain the support of an $u_i$ entirely, then  breaking up the intervals will affect the norm by at most a factor of 2.
Therefore, we may assume each $s_j$ contained in one $u_i$. Put $s_j=s_j^{i(1)}\cup s_j^{i(2)}$ where $s_j^{i(1)}$ and $s_j^{i(2)}$ are the segments of $s_j$ which intersects only the support of $h(n_1,\ldots, n_i)$ and of $h(m_1,\ldots, m_i)$, respectively, and put $s_j=s_j^{i(1)}=s_j^{i(2)}$ if $s_j$ is already a trivial segment. Thus the above quantity is less than or equal to
\begin{align*}
2\Big\|\sum_{i\in I}\sum_{j=1}^l s_j^{i(1)}(h(n_1,\ldots,n_i))e_{o(s_j)}\Big\|_E +2\Big\|\sum_{i\in I}\sum_{j=1}^l s_j^{i(2)}(h(m_1,\ldots, m_i))e_{o(s_j)}\Big\|_E+2\ep,
\end{align*}
which is less than or equal to $4K+2\ep$ by the first part of the proof.

\end{proof}

\subsection{Proof of Theorem \ref{tree-space}}

We are now ready to proceed with the proof of Theorem \ref{tree-space}.
Let $k\in\N$ be so that
$\rho(k)\ge 10K$, $\ep>0$, and $H\in\sU$ be as in Proposition \ref{trivial-segments}.

Let $M=M(k,K,\ep)$ be as in Lemma \ref{M-k}.  Using Lemma \ref{M-k} and $\sU$ is a Ramsey ultrafilter we may assume the $N$ in Lemma \ref{M-k} is stabilized for all tuples in $H^M$.
Consider $\vec {n}, \vec{m}$ in $H^M$ so that

\begin{align*}
n_1=m_1< \ldots<& n_{N-1}=m_{N-1} \\ 
<n_N< \ldots &< n_{N+k}<m_N<\ldots<m_{N+k}\\
&< n_{N+k+1}=m_{N+k+1}< \ldots< n_M=m_M
\end{align*} 

That is, two tuples are identical except on the interval $[N, N+k]$ where one
comes after the other. Thus $d_{\K}(\vec{n}, \vec{m})=k$. Then
$\phi(\vec{n})-\phi(\vec{m})$ is of the form 
$$\sum_{i=N}^{N+k} h(n_1, \ldots, n_i)-\sum_{i=N}^{N+k}h(m_1,
\ldots, m_i)+\sum_{i=N+k+1}^M h(n_1, \ldots,
n_i)-h(m_1, \ldots, m_i),$$  and since the first $N-1$ blocks are
identical they cancel out. Then by Theorem \ref{mls}, Proposition \ref{trivial-segments} and Lemma \ref{M-k} we have

\begin{align*}
&10K<\rho(k)\le \|P_{\sF^*}\big[\phi(\vec{n})-\phi(\vec{m})\big]\|+\|P_{(\sF^*)^c}\big[\phi(\vec{n})-\phi(\vec{m})\big]\|\\
&\le \left\|\sum_{i=N}^{N+k}P_{\sF^*}h(n_1, \ldots, n_i)- \sum_{i=N}^{N+k}P_{\sF^*}h(m_1, \ldots, m_i)\right\|\\
&\ \ +\left\|\sum_{i=N+k+1}^M P_{\sF^*}h(n_1, \ldots, n_i)-P_{\sF^*}h(m_1, \ldots, m_i)\right\|+2K+2\\
&\le 2\ep+2K+2\ep+2K+2,
\end{align*}
which is a contradiction for small $\ep>0$.

\section{Further questions}

The general question which motivates all our previous work is still open: Can $c_0$ coarsely embed into a separable dual Banach space? As a consequence of our work in Section \ref{Sec:JamesTree}, however, the following also presents itself.

\begin{Question}\label{gen-JT}
Let $\jt(e_i)$ be a James tree space with boundedly complete node basis $(u_\alpha)_{\alpha\in T}$. Does $c_0$ coarsely embed into $\jt(e_i)$? As a concrete example, consider $(e_i)$ to be the unit vector basis of the dual Tsirelson space $T^*$. 

\end{Question}


\begin{thebibliography}{ABCD}


\bibitem[AMS]{AMS} S. Argyros, P. Motakis, and B. Sar\i, {\em A study of
conditional spreading sequences,} Journal of Functional Analysis, 273 (2017),
1205-1257.

\bibitem[BLS]{BLS}  F. Baudier, G. Lancien, and Th. Schlumprecht, {\em The
coarse geometry of Tsirelson's space and applications}, J. Am. Math. Soc. 31,
(2018), no. 3, 699-717

\bibitem[BLMS]{BLMS} F. Baudier, G. Lancien, P. Motakis, and Th. Schlumprecht,
{\em A new coarsely rigid class of Banach spaces}, J. Inst. Math. Jussieu, 2021;20(5):1729-1747.

\bibitem[BLMS2]{BLMS2} F. Baudier, G. Lancien, P. Motakis, and Th. Schlumprecht,
{\em Coarse and Lipschitz universality}, Fundamenta Math., 2021, 254 (2), pp.181-214.

\bibitem[BHO]{BHO} S. Bellenot, R. Haydon, E. Odell,  
{\em Quasi-reflexive and tree spaces constructed in the spirit of R. C. James}, Banach space theory (Iowa City, IA, 1987), 19-43,
Contemp. Math., 85, Amer. Math. Soc., Providence, RI, 1989. 

\bibitem[B]{B} D. Booth, {\em Ultrafilters on a countable set},  Annals of Mathematical Logic 2 (1970), no. 1, 1-24.

\bibitem[BLPP]{BLPP} Bruno de Mendon\c{c}a Braga, Gilles Lancien, Colin Petitjean and
Antonin Proch\`azka,  {\em On Kalton's interlaced graphs and nonlinear embeddings
into dual Banach spaces}, Journal of Topology and Analysis, 2021, 10.1142.

\bibitem[DFJP]{DFJP} W. J. Davis, T. Figiel, W. B. Johnson and A. Pelczynski,
{\em Factoring weakly compact operators} J. Funct. Anal., 17:311-327, 1974.

\bibitem[EMN]{EMN} A. Eskenazis, M. Mendel, A. Naor, {\em Nonpositive curvature is not coarsely universal,} Inventiones Mathematicae, Volume: 217 Issue: 3 Pages: 833-886 (2019).

\bibitem[FOSZ]{FOSZ} Freeman, D.; Odell, E.; Sar\i, B.; Zheng, B. {\em On spreading
sequences and asymptotic structures,} Trans. Amer. Math. Soc. 370 (2018), no. 10,
6933-6953.

\bibitem[Gro1]{Gro1} M. Gromov, {\em Rigid transformations groups,} In Geometrie differentielle
 (Paris, 1986), volume 33 of Travaux en Cours, pages 65-139. Hermann, Paris, 1988.

\bibitem[Gro2]{Gro2} M. Gromov, {\em Asymptotic invariants of infinite groups,} In Geometric group theory, Vol. 2 (Sussex, 1991), volume 182 of London Math. Soc.
Lecture Note Ser., pages 1-295. Cambridge Univ. Press, Cambridge, 1993.

\bibitem[HO]{HO} T. Halbeisen and E. Odell, {\em On asymptotic models in Banach spaces}, Israel J. Math. 139 (2004), 253-291.

\bibitem[J]{J} T. Jech, {\em Set Theory, 3rd ed.}, Springer, 2002.

\bibitem[JR]{JR} W. B. Johnson and N. L. Randrianarivony, {\em $\ell_p$ ($p>2$) does not coarsely embed into a Hilbert space,} Proc. Amer. Math. Soc., 134(4):1045-
1050, 2006.

\bibitem[K]{K} N. Kalton, {\em Coarse and uniform embeddings into reflexive
spaces,} Quart. J. Math. (Oxford) 58 (2007), 393-414.

\bibitem[LPP]{LPP} Lancien, G., Petitjean, C., Prochazka, A., {\em On the coarse
geometry of James spaces.} Canad. Math. Bull. 63 (2020), no. 1, 77-93.




\bibitem[LT]{LT} J. Lindenstrauss and L. Tzafriri, Classical Banach spaces I and
II., Springer, 1996.

\bibitem[MMT]{MMT} B. Maurey, V. Milman, and N. Tomczak-Jaegermann, {\em
Asymptotic infinite- dimensional theory of Banach spaces}, in: Geometric Aspects
of Functional Analysis, Vol. 2, J. Lindenstrauss and V. Milman (eds.), Oper.
Theory Adv. Appl. 77, Birkhauser, Basel, 1994, 149-175.



\bibitem[M]{M} Y. N. Moschovakis, {\em Descriptive Set Theory, 2nd ed.}, American Mathematical Society, 2009.

\bibitem[P]{P} K. Prikry, {\em Determinateness and Partitions}, Proceedings of the AMS 54 (1976), no. 1, pp. 303-306.


\bibitem[Yu]{Yu} G. Yu, {\em The coarse Baum-Connes conjecture for spaces which admit a uniform embedding into Hilbert space,} Invent. Math., 139(1):201-240, 2000.


\end{thebibliography}
\end{document}